\setlist[enumerate]{
	font=\normalfont,
	label=(\roman*),
	topsep=0pt,
	itemsep=-0.3ex,
	partopsep=1ex,
	parsep=1ex}
\newcommand{\bma}{\left(\begin{array}{cc}}
\newcommand{\ema}{\end{array}\right)}
\newcommand{\bca}{\left(\begin{array}{c}}
\newcommand{\eca}{\end{array}\right)}
\newcommand{\cC}{\ensuremath{\mathcal{C}}}
\newcommand{\cD}{\ensuremath{\mathcal{D}}}
\newcommand{\cO}{\ensuremath{\mathcal{O}}}
\newcommand{\cR}{\ensuremath{\mathcal{R}}}
\newcommand{\cT}{\ensuremath{\mathcal{T}}}
\newcommand{\bZ}{\ensuremath{\mathbb{Z}}}
\newcommand{\bT}{\ensuremath{\mathbb{T}}}
\newcommand{\bO}{\ensuremath{\mathbb{O}}}
\numberwithin{equation}{section} 
\theoremstyle{plain} 
\newtheorem{thm}{Theorem}[section]
\newtheorem{lemma}[thm]{Lemma}
\newtheorem{prop}[thm]{Proposition}
\theoremstyle{definition} 
\newtheorem{defn}[thm]{Definition}
\newtheorem{example}[thm]{Example}
\newtheorem{examples}[thm]{Examples}
\theoremstyle{remark} 
\newtheorem{rmk}[thm]{Remark}
\DeclareMathOperator{\Ad}{Ad}    
\DeclareMathOperator{\End}{End}   
\DeclareMathOperator{\Fock}{Fock} 
\DeclareMathOperator{\Id}{Id}     
\DeclareMathOperator{\linspan}{span} 
\DeclareMathOperator{\Mult}{Mult} 
\DeclareMathOperator{\supp}{supp} 
\DeclareMathOperator{\Aut}{Aut}
\DeclareMathOperator{\sing}{sing}
\DeclareMathOperator{\src}{src}
\DeclareMathOperator{\fin}{fin}
\DeclareMathOperator{\infi}{inf}
\DeclareMathOperator{\reg}{reg}
\newcommand{\C}{\mathbb{C}}   
\newcommand{\Cc}{\mathcal{C}} 
\newcommand{\K}{\mathcal{K}}  
\newcommand{\N}{\mathbb{N}}   
\newcommand{\ol}{\overline}  
\newcommand{\ox}{\otimes}     
\newcommand{\Pp}{\mathcal{P}} 
\newcommand{\Rr}{\mathcal{R}} 
\newcommand{\T}{\mathcal{T}} 
\newcommand{\Z}{\mathbb{Z}}   
\newcommand{\pairing}[2]{(#1\mathbin{|}#2)} 
\newcommand{\stroke}{\mathbin|}     
\def\pairL_#1(#2|#3){{}_{#1}(#2\stroke#3)} 
\def\pairR(#1|#2)_#3{(#1\stroke#2)_{#3}} 
\def\scal<#1|#2>{\langle#1\stroke#2\rangle} 
\newcommand{\tpmod}[1]{{\@displayfalse\pmod{#1}}}
\newbox\ncintdbox \newbox\ncinttbox 
\title{\vspace*{-1pc}%
	Splittings for C*-correspondences and strong shift equivalence}
\author[K. A. Brix]{Kevin Aguyar Brix}
\email{kabrix.math@fastmail.com}
\address[K. A. Brix]{School of Mathematics and Statistics, University of Glasgow, United Kingdom}
\curraddr{Department of Mathematics and Computer Science, University of Southern Denmark, 5230 Odense, Denmark}
\author[A. Mundey]{Alexander Mundey}
\email{amundey@uow.edu.au}
\author[A. Rennie]{Adam Rennie}
\email{renniea@uow.edu.au}
\address[A. Mundey and A. Rennie]{School of Mathematics and Applied Statistics, University of Wollongong, Australia}
\thanks{K.A.B. was supported by a Carlsberg Foundation Internationalisation Fellowship and a DFF-International Postdoc (case number 1025--00004B).
A.M. was supported by ARC Project DP200100155 and University of Wollongong RevITAlising Research Grant IV036.
A.M. and A.R. thank Bram Mesland and Aidan Sims for useful discussions. 
We also thank Jason DeVito for helpful advice via the Mathematics StackExchange, and Paige Riddiford for careful proofreading.
We thank Adam Dor-On for bringing to our attention a gap in our proof of \cref{thm:sse-equivariant-morita-equivalence}, see \cref{rmk:BDR} for a fix of the problem.}
\subjclass[2020]{46L55 (Primary); 37A55, 46L08 (Secondary).}
\keywords{Cuntz--Pimsner algebras, strong shift equivalence, in-splits, out-splits, topological graphs.}
\begin{document}

\maketitle

\vspace{-2pc}

\begin{abstract}
 We present an extension of the notion of in-splits from symbolic dynamics to topological graphs and, more generally, to $C^*$-correspondences. We demonstrate that in-splits provide examples of strong shift equivalences of $C^*$-correspondences.
Furthermore, we provide a streamlined treatment of Muhly, Pask, and Tomforde's proof that any strong shift equivalence of regular $C^*$-correspondences induces a (gauge-equivariant) Morita equivalence between Cuntz--Pimsner algebras.
For topological graphs, we prove that in-splits induce diagonal-preserving gauge-equivariant $*$-isomorphisms in analogy with the results for Cuntz--Krieger algebras. Additionally, we examine the notion of out-splits for $C^*$-correspondences.
\end{abstract}

\parskip=6pt
\parindent=0pt

\section{Introduction}
\label{sec:intro}

This paper studies noncommutative dynamical systems---defined as $C^*$-correspondences over not necessarily commutative $C^*$-algebras---
building on previous work \cite{Pimsner,Katsura2004-trans,Katsura2004-jfa,MPT,Kakariadis-Katsoulis,Doron-Eilers-Geffen,Carlsen-Doron-Eilers}.
Inspired by classical constructions of state splittings in symbolic dynamics \cite{Williams1973}, 
we introduce in-splits and out-splits for $C^*$-correspondences. We prove that these operations change the $C^*$-correspondence,
but leave the abstract dynamical system invariant, up to a notion of strong shift equivalence (conjugacy) as defined by Muhly, Pask, and Tomforde.
This strong shift equivalence is reflected in the associated Cuntz--Pimsner $C^*$-algebras as gauge-equivariant Morita equivalence.

Symbolic dynamics \cite{Lind-Marcus}  is a powerful tool in the study of smooth dynamical systems 
(such as toral automorphisms or Smale's Axiom A diffeomorphisms) that works by discretising time using shift spaces.
Every subshift of finite type can be represented by a finite directed graph. The \emph{conjugacy problem} for subshifts of finite type is fundamental: \emph{when are two shifts of finite type the same?}
Williams \cite{Williams1973}  showed that two subshifts of finite type are conjugate
if and only if the adjacency matrices $\mathsf{A}$ and $\mathsf{B}$ of their graph representations are \emph{strong shift equivalent}.
That is, there are adjacency matrices $\mathsf{A} = \mathsf{A_1,\dots,A_n}= \mathsf{B}$ such that for each $i=1,\dots,n-1$ 
there are rectangular matrices with nonnegative integer entries $\mathsf{R}$ and $\mathsf{S}$
such that $\mathsf{A_i} = \mathsf{RS}$ and $\mathsf{SR} = \mathsf{A_{i+1}}$.

Williams' motivation was the observation that state splittings of graph representations change the graph but leave the associated shift space  invariant up to conjugacy.
The data of a  state splitting is reflected in matrices $\mathsf{R}$ and $\mathsf{S}$ as above,
and Williams proved the \emph{decomposition theorem}: 
any conjugacy is a finite composition of elementary conjugacies coming from state splittings.
Deciding whether two subshifts are conjugate can be difficult in practice, 
and it is an open problem in symbolic dynamics  to determine whether strong shift equivalence is decidable.

In \cite{Cuntz-Krieger1980}, Cuntz and Krieger associated a $C^*$-algebra $\cO_{\mathsf{A}}$, now known as a Cuntz--Krieger algebra, to a subshift with adjacency matrix $A$
and showed that it is a universal simple \mbox{$C^*$-algebra} when $\mathsf{A}$ is irreducible and not a permutation.
The $C^*$-algebra $\cO_{\mathsf{A}}$ comes equipped with an action of the circle group $\bT$---the \emph{gauge action}---and a canonical commutative subalgebra---the \emph{diagonal}.
Cuntz and Krieger proved that conjugate subshifts induce Morita equivalent Cuntz--Krieger algebras.

Recently, Carlsen and Rout \cite{Carlsen-Rout} completed the picture: 
$\mathsf{A}$ and $\mathsf{B}$ are strong shift equivalent if and only if there is a $*$-isomorphism $\Phi \colon \cO_{\mathsf{A}} \ox \K \to \cO_{\mathsf{B}}\ox \K$
that is both gauge-equivariant and diagonal-preserving ($\K$ is the $C^*$-algebra of compact operators on separable Hilbert space).
Cuntz--Krieger algebras have been generalised in many ways (e.g. directed graphs and their higher-rank analogues, see \cite{Raeburn} and references therein),
and we emphasise Pimsner's construction from a $C^*$-correspondence \cite{Pimsner},
later refined by Katsura \cite{Katsura2004-jfa},
and applied by Katsura to his topological graphs \cite{Katsura2004-trans}.

We mention in passing that there are other moves on graphs:
Parry and Sullivan's \emph{symbol expansions} \cite{Parry-Sullivan} and the \emph{Cuntz splice} both related to flow equivalence 
as well as more advanced moves \cite{Eilers-Ruiz} which were utilised in the geometric classification of all unital graph $C^*$-algebras \cite{ERRS2016}. We leave open whether these moves have analogues for correspondences.

In the general setting of $C^*$-correspondences (a right Hilbert $C^*$-module with a left action \cite{Lance}), we do not have access to a notion of conjugacy,
but Muhly, Pask, and Tomforde \cite{MPT} introduced strong shift equivalence in direct analogy with Williams' work.
For regular $C^*$-correspondences, they showed that the induced Cuntz--Pimsner algebras are Morita equivalent
(we verify that this Morita equivalence is in fact gauge-equivariant in the sense of \cite{Combes}).
It is an interesting open problem whether the weaker notion of \emph{shift equivalence} introduced in \cite{Kakariadis-Katsoulis} (see also \cite{Carlsen-Doron-Eilers})
also implies gauge-equivariant Morita equivalence.

For directed graphs, an in-split is a factorisation of the range map, and the range map induces the left action on the graph correspondence. 
An in-split of a general correspondence is then formulated as a factorisation of the left action subject to natural conditions.
Similarly, an out-split is a factorisation of the source map which is reflected in the right-module structure of the graph correspondence,
and we define an out-split of a general correspondence accordingly, although this appears less natural than the in-split.
Our notions of splittings of correspondences provide examples of strong shift equivalences. They exhibit the same asymmetry as in the classical setting (cf.~\cite{Bates-Pask}):
an out-split induces a gauge-equivariant Morita equivalence, while an in-split induces a gauge-equivariant $*$-isomorphism of Cuntz--Pimsner algebras.
We leave open the problem of whether an arbitrary strong shift equivalence of correspondences is a composition of splittings.

We specialise our splittings to the case of topological graphs and in this case the analogy with directed graphs is almost complete. 
For general `non-commutative dynamics' defined by \mbox{$C^*$-correspondences} over not-necessarily commutative $C^*$-algebras, 
the analogy is as complete as it can be. 
It is unreasonable to expect complete characterisations of strong shift equivalence in terms of Cuntz--Pimsner algebras 
akin to the Carlsen--Rout result, due to the lack of a diagonal subalgebra for a general correspondence.

In \cref{sec:oncemoreforthecheapseats} we recall what we need about $C^*$-modules, correspondences, and their associated $C^*$-algebras. 
Along the way we provide some proofs for results that seem to be missing from the literature. 
\cref{sec:sse} recalls strong shift equivalence of correspondences and refines the main result of Muhly, Pask, and Tomforde \cite[Theorem 3.14]{MPT}.
In-splits for topological graphs and general correspondences are introduced in \cref{sec:insplit}. 
Within this section we also extend the idea of diagonal subalgebra to topological graphs and show that the gauge equivariant $*$-isomorphisms between a topological graph 
correspondence and any of its in-splits is diagonal-preserving.
Finally, \cref{sec:out-damn-split} defines and gives the basic properties of non-commutative out-splits.

\section{Correspondences and Cuntz--Pimsner algebras}
\label{sec:oncemoreforthecheapseats}

In this preliminary section we provide background information and establish notation for what we need to know about $C^*$-correspondences and their $C^*$-algebras 
(Toeplitz--Pimsner algebras and Cuntz--Pimsner algebras), frames, and topological graphs.

\subsection{\texorpdfstring{$C^*$}{C*}-modules and correspondences}
\label{subsec:mods-coz}

We follow conventions of~\cite{Lance}
for $C^*$-modules, and Pimsner \cite{Pimsner} and Katsura \cite{Katsura2004-jfa} for the algebras defined by $C^*$-correspondences. 

A right Hilbert $A$-module $X_A$ is a right module over a $C^*$-algebra $A$ equipped with an $A$-valued inner product $(\cdot \mid \cdot)_A$
such that $X_A$ is complete with respect to the norm induced by the inner product. The module $X_A$ is \emph{full} if $\ol{(X_A \mid X_A)}_A = A$.
We denote the $C^*$-algebra of adjointable operators on $X_A$ by $\End_A(X)$, 
the $C^*$-ideal of generalised compact operators by $\End^0_A(X)$, and the finite-rank operators by $\End_A^{00}(X)$.
The finite-rank operators are generated by rank-one operators $\Theta_{x,y}$ satisfying $\Theta_{x,y}(z) = x \cdot(y \mid z)_A$,
for all $x,y,z \in X_A$.

\begin{defn}
\label{defn:correspondence}
Let $X_B$ be a right Hilbert $B$-module, and let $\phi_X\colon A\to \End_B(X)$ be a \mbox{$*$-homomorphism}. 
The data $(\phi_X,{}_A X_B)$ is called an $A$--$B$-\emph{correspondence} (or just a correspondence),
and if $\phi_X$ is understood we will write ${}_AX_B$. 
If $A = B$ we refer to $(\phi_X, {}_A X_A)$ as a correspondence \emph{over} $A$.

A correspondence $(\phi_X, {}_AX_B)$ is \emph{nondegenerate} if $\overline{\phi_X(A)X}=X$,
and following~\cite[Definition 3.1]{MPT}, we say the correspondence is \emph{regular} if the left action is \emph{injective} (i.e. $\ker(\phi_X)=\{0\}$) 
and \emph{by compacts} (i.e. $\phi_X(A)\subseteq \End^0_B(X)$).
\end{defn}

Throughout we assume that $A$ and $B$ are both $\sigma$-unital $C^*$-algebras and that all Hilbert modules are countably generated, although many of our results do not critically rely on these assumptions.

There is a natural notion of morphism between correspondences.

\begin{defn}\label{def:cozzymorph}
Let $(\phi_X, {}_AX_A)$ and $(\phi_Y, {}_BY_B)$ be correspondences.
A \emph{correspondence morphism} $(\alpha,\beta) \colon (\phi_X, {}_{A} X_A) \to (\phi_Y, {}_{B} Y_B)$ consists of 
a $*$-homomorphism $\alpha \colon A \to B$ and a linear map $\beta \colon X \to Y$ satisfying:
\begin{enumerate}
  \item $(\beta(\xi) \mid \beta(\eta))_B = \alpha((\xi \mid \eta)_A) $ for all $\xi,\, \eta \in X$; \label{i:cozzymorph1}
  \item $ \beta(\xi \cdot a) = \beta(\xi) \cdot \alpha(a)$, for all $a \in A$ and $\xi \in X$; and \label{i:cozzymorph2}
  \item $ \beta(\phi_X(a) \xi) = \phi_Y(\alpha(a)) \beta(\xi)$, for all $a \in A$ and $\xi \in X$. \label{i:cozzymorph3}
\end{enumerate}
A correspondence morphism is \emph{injective} if $\alpha$ is injective (in which case $\beta$ is isometric) and it is a \emph{correspondence isomorphism} if $\alpha$ and $\beta$ are isomorphisms. 
Composition of morphisms is defined by $(\alpha,\beta) \circ (\alpha',\beta') = (\alpha \circ \alpha', \beta \circ \beta')$.
If $(\phi_Y, {}_BX_B) = ({\Id}_B, {}_B B_B)$ is the identity correspondence \cite{EKQR} over the $C^*$-algebra $B$, then we call $(\alpha,\beta)$ a \emph{representation} of  $(\phi_X,{}_A X_A)$ in $B$.
\end{defn}

A representation $(\alpha,\beta)$ of a $C^*$-correspondence $(\phi_X, X_A)$ is said to \emph{admit a gauge action} 
if there is a strongly continuous action $\gamma^{(\alpha,\beta)}$ of $\bT$ on $C^*(\alpha,\beta) \coloneqq C^*(\alpha(A) \cup \beta(X_A))$---
the $C^*$-algebra generated by the image of $(\alpha,\beta)$ in $B$---by $*$-automorphisms such that
$\gamma^{(\alpha,\beta)}_z(\alpha(a)) = \alpha(a)$ for all $a\in A$, and $\gamma^{(\alpha,\beta)}_z(\beta(x)) = z \beta(x)$ for all $x\in X$.

\begin{defn}
The \emph{Toeplitz algebra} $\cT_X$ of a $C^*$-correspondence $(\phi,{}_A X_A)$ is the universal $C^*$-algebra for representations of $(\phi_X, {}_A X_A)$ in the following sense. 
There exists a representation $(\underline{\iota}_A, \underline{\iota}_X) \colon (\phi_X,{}_A X_A) \to \cT_X$ such that  $\cT_X = C^*(\underline{\iota}_A,\underline{\iota}_X)$, 
and for any other representation $(\alpha,\beta) \colon (\phi_X,{}_A X_A) \to B$ in a $C^*$-algebra $B$, 
there is a unique $*$-homomorphism $\alpha \times \beta \colon \cT_X \to B$ 
such that $(\alpha \times \beta) \circ \underline{\iota}_A = \alpha$ and $(\alpha \times \beta) \circ \underline{\iota}_X= \beta$.
\end{defn}

To a correspondence $({\phi_X}, {}_A X_A)$ we associate its \emph{covariance ideal}
\[
	J_{\phi_X} \coloneqq \phi_X^{-1}(\End_A^0(X)) \cap \ker(\phi_X)^{\perp} ,
\]
which is an ideal in $A$ (cf.~\cite[Definition 3.2]{Katsura2004-jfa}). 
The covariance ideal is the largest ideal of $A$ such that the restriction of $\phi_X$ to it is both injective and has image contained in $\End_A^0(X)$. 
We will often consider covariant morphisms (defined below) which respect the covariance ideal.

A correspondence morphism $(\alpha,\beta) \colon (\phi_X, {}_A X_A) \to (\phi_Y, {}_B Y_B)$
induces a $*$-homomorphism of compacts $\beta^{(1)}\colon \End^0_A(X) \to \End_B^0(Y)$ satisfying $\beta^{(1)}(\Theta_{x_1,x_2}) = \Theta_{\beta(x_1),\beta(x_2)}$ for all $x_1,x_2\in X$. 

\begin{defn}
A morphism $(\alpha,\beta) \colon (\phi_X, {}_A X_A) \to (\phi_Y, {}_B Y_B)$ is \emph{covariant} if 
\[
\beta^{(1)} \circ \phi_X(c) = \phi_Y \circ \alpha(c) \quad \text{ for all } c \in J_{\phi_X}.
\]
In particular, we must have $\alpha(J_{\phi_X}) \subseteq J_{\phi_Y}$. 
If $(\phi_Y, {}_BX_B) = ({\Id}_B, {}_B B_B)$ is the identity correspondence over $B$, then we call $(\alpha,\beta)$ a \emph{covariant representation} of  $(\phi_X,{}_A X_A)$ in $B$.
\end{defn}

\begin{defn}
The \emph{Cuntz--Pimsner algebra} $\cO_X$ of a $C^*$-correspondence $(\phi,{}_A X_A)$ is the universal $C^*$-algebra for covariant representations of $(\phi_X, {}_A X_A)$ in the following sense. 
There exists a universal covariant representation $(\iota_A,\iota_X) \colon (\phi_X,{}_A X_A) \to \cO_X$ such that $\cO_X = C^*(\iota_A,\iota_X)$, and 
for any other covariant representation $(\alpha,\beta) \colon (\phi_X,{}_A X_A) \to B$ on a $C^*$-algebra $B$,
there is a unique $*$-homomorphism $\alpha \times \beta \colon \cO_X \to B$ such that $(\alpha \times \beta) \circ 
\iota_A = \alpha$ and $(\alpha \times \beta) \circ \iota_X = \beta$.
\end{defn}

The universal covariant representation $(\iota_A, \iota_X)$ admits a gauge action $\gamma^X\colon \mathbb{T}\curvearrowright \cO_X$ that we shall refer to as the \emph{canonical gauge action}.

\begin{lemma}\label{lem:induced-gauge-invariant-homomorphism} 
  Let $(\alpha,\beta)\colon (\phi_X, {}_AX_A) \to (\phi_Y, {}_BY_B)$ be a covariant correspondence morphism,
  and let $(\iota_A, \iota_X)$ and $(\iota_B,\iota_Y)$ be universal covariant representations of $\cO_X$ and $\cO_Y$, respectively.
  Then there is an induced gauge-equivariant $*$-homomorphism $\alpha\times \beta\colon \cO_X \to \cO_Y$
  satisfying 
  \[(\alpha \times \beta)\circ \iota_A = \iota_B\circ \alpha \quad \text{and} \quad (\alpha \times \beta)\circ \iota_X = \iota_Y\circ \beta.\]
  If $\alpha$ is injective, then $\alpha\times \beta$ is injective.
\end{lemma}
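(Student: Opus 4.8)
The plan is to use the universal property of $\cO_X$ directly. First I would produce, out of the morphism $(\alpha,\beta)$ and the universal covariant representation $(\iota_B,\iota_Y)$ of $\cO_Y$, a covariant representation of $(\phi_X,{}_AX_A)$ in $\cO_Y$. The natural candidate is the pair $(\alpha',\beta') \coloneqq (\iota_B\circ\alpha,\ \iota_Y\circ\beta)$. Checking that this is a representation in the sense of \cref{def:cozzymorph} is routine: conditions \eqref{i:cozzymorph1}--\eqref{i:cozzymorph3} for $(\alpha',\beta')$ follow by composing the corresponding identities for $(\alpha,\beta)$ with the fact that $(\iota_B,\iota_Y)$ is itself a representation of $(\phi_Y,{}_BY_B)$ — for instance $(\beta'(\xi)\mid\beta'(\eta))_{\cO_Y} = (\iota_Y\beta(\xi)\mid\iota_Y\beta(\eta))_{\cO_Y} = \iota_B((\beta(\xi)\mid\beta(\eta))_B) = \iota_B\alpha((\xi\mid\eta)_A) = \alpha'((\xi\mid\eta)_A)$, and similarly for the module and left-action identities.

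The key point is covariance. I must show $(\beta')^{(1)}\circ\phi_X(c) = \phi_{\cO_Y}(\alpha'(c))$ for all $c\in J_{\phi_X}$, where here the "left action of $\cO_Y$ on itself" means multiplication, i.e. I need $\iota_Y^{(1)}(\beta^{(1)}(\phi_X(c))) = \iota_B(\alpha(c))$. By covariance of $(\alpha,\beta)$ we have $\beta^{(1)}(\phi_X(c)) = \phi_Y(\alpha(c))$, using that $\alpha(J_{\phi_X})\subseteq J_{\phi_Y}$; applying $\iota_Y^{(1)}$ and then invoking covariance of the universal representation $(\iota_B,\iota_Y)$ of $\cO_Y$ (which says $\iota_Y^{(1)}(\phi_Y(a)) = \iota_B(a)$ for $a\in J_{\phi_Y}$) gives exactly $\iota_B(\alpha(c))$. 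Here I am using the compatibility of $(-)^{(1)}$ with composition of morphisms, namely $(\iota_Y\circ\beta)^{(1)} = \iota_Y^{(1)}\circ\beta^{(1)}$, which is immediate on finite-rank operators and extends by continuity. Thus $(\alpha',\beta')$ is a covariant representation of $(\phi_X,{}_AX_A)$ in $\cO_Y$, and the universal property of $\cO_X$ yields a unique $*$-homomorphism $\alpha\times\beta\colon\cO_X\to\cO_Y$ with $(\alpha\times\beta)\circ\iota_A = \iota_B\circ\alpha$ and $(\alpha\times\beta)\circ\iota_X = \iota_Y\circ\beta$.

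Gauge-equivariance is then a soft argument: the representation $z\mapsto (\iota_B\circ\alpha,\ z\,\iota_Y\circ\beta)$ is, for each $z\in\bT$, unitarily equivalent via $\gamma^Y_z$ to $(\alpha',\beta')$; chasing the universal property, $\gamma^Y_z\circ(\alpha\times\beta)$ and $(\alpha\times\beta)\circ\gamma^X_z$ are both the $*$-homomorphism $\cO_X\to\cO_Y$ induced by the covariant representation $(\iota_B\circ\alpha,\ z\,\iota_Y\circ\beta)$, hence equal by uniqueness. Finally, for injectivity when $\alpha$ is injective, I would apply the gauge-invariant uniqueness theorem for Cuntz--Pimsner algebras: the composed representation $(\alpha\times\beta)\circ(\iota_A,\iota_X) = (\iota_B\circ\alpha,\ \iota_Y\circ\beta)$ has $\iota_B\circ\alpha$ injective on $A$ (since $\iota_A$ is faithful on $A$ — as $\alpha$ injective forces the analogous faithfulness — and $\alpha$ is injective), and it admits a gauge action compatible with $\gamma^X$, namely the restriction of $\gamma^Y$; by the gauge-invariant uniqueness theorem this forces $\alpha\times\beta$ to be injective. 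The main obstacle is purely bookkeeping — keeping straight the two layers of $(-)^{(1)}$ maps and the two notions of covariance (for $(\alpha,\beta)$ and for $(\iota_B,\iota_Y)$) — rather than anything conceptually deep; no genuinely hard estimate is needed.
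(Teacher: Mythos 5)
Your proposal is correct and follows essentially the same route as the paper: compose $(\iota_B,\iota_Y)$ with $(\alpha,\beta)$ to obtain a covariant representation of $(\phi_X,{}_AX_A)$ in $\cO_Y$, invoke the universal property, check gauge-equivariance (the paper does this directly on generators, you via uniqueness — an immaterial difference), and deduce injectivity from the gauge-invariant uniqueness theorem. One small slip in wording: the injectivity of $\iota_B\circ\alpha$ comes from injectivity of the universal covariant representation $\iota_B$ on $B$ (a standard fact from Katsura), not from any faithfulness of $\iota_A$, but this does not affect the argument.
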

\begin{rmk}
The relation $(\alpha \times \beta) \circ \iota_X^{(1)} = \iota_{Y}^{(1)} \circ \beta$ also follows easily from the lemma and the definition of the induced ${}^{(1)}$ maps on compacts. 
\end{rmk}

\begin{proof}
  The composition $(\iota_B, \iota_Y)\circ (\alpha, \beta)$ is a covariant representation of $(\phi_X, X_A)$ on $\cO_Y$,
  so by the universal property (and a slight abuse of notation) there is a $*$-homomorphism $\alpha\times \beta\colon \cO_X \to \cO_Y$ satisfying
  $(\alpha \times \beta)\circ \iota_A = \iota_B\circ \alpha$ and $(\alpha \times \beta)\circ \iota_X = \iota_Y\circ \beta$.
  If $a\in A$, then 
  \[
    (\alpha\times \beta)\circ \gamma^X_z(\iota_A(a)) = \iota_B\circ \alpha(a) = \gamma^Y_z\circ (\alpha\times \beta)(\iota_A(a)),
  \]
  for all $z\in \mathbb{T}$, and if $x\in X_A$, then
  \[
    (\alpha\times \beta)\circ \gamma^X_z(\iota_X(x)) = z (\alpha\times \beta)(\iota_X(x)) = \gamma^Y_z\circ (\alpha\times \beta)(\iota_X(x)),
  \]
  for all $z\in \mathbb{T}$.
  This shows that $\alpha\times \beta$ is gauge-equivariant.
  If $\alpha$ is injective, then $(\iota_A\circ \alpha, \iota_X\circ \beta)$ is an injective representation that admits a gauge action
  so $\alpha\times \beta$ is injective by the gauge invariant uniqueness theorem~\cite[Theorem 6.4]{Katsura2004-jfa}.
\end{proof}

To talk about Morita equivalence we isolate a special kind of correspondence.
\begin{defn}
	\label{defn:Morita-equiv}
	An $A$--$B$-imprimitivity bimodule between $C^*$-algebras $A$ and $B$
	is a correspondence $(\phi,{}_AX_B)$ with an additional left $A$-valued inner product such that the right $B$ action is adjointable for the left inner product, and  $X$ is full as a left and as a
	right module. Moreover
	$$
	\phi({}_A(x|y))z=x\cdot (y|z)_B\quad x,\,y,\,z\in X.
	$$
	If such an imprimitivity bimodule exists then $A$ and $B$ are \emph{Morita equivalent}.
	\end{defn}
	
	There is also a group-equivariant version of Morita equivalence due to Combes, \cite{Combes}. 
    To describe equivariant Morita equivalence and the gauge action of the circle on Cuntz--Pimsner algebras we recall some definitions and results.

\begin{defn} Let $G$ be a locally compact Hausdorff group and let $A$ be a $G$-$C^*$-algebra with strongly continuous action $\alpha \colon G \to \Aut(A)$.
An \emph{action of $G$ on an $A$-module $X_A$} is a  strongly continuous action $g \mapsto U_g$ of $G$ on $X_A$ by $\C$-linear isometries such that
	\begin{enumerate}
		\item $U_g (x\cdot a) = U_g(x)\alpha_g(a)$ for all $x \in X$ and $a \in A$; and\label{1}
		\item $(U_gx \mid U_gy)_A = \alpha_g ((x \mid y)_A)$ for all $x,\,y \in X$.
	\end{enumerate}
	If $(\phi,{}_B X_A)$ is a correspondence and $B$ is a $G$-$C^*$-algebra with action $\beta:G\to \Aut(B)$, then $U$ is an \emph{action on the correspondence} if $U$ is an action on $X_A$ and in addition $U_g \phi(b) = \phi(\beta_g(b))U_g$  for all $b \in B$. The action is \emph{covariant}, if in addition $\beta_g(J_X) = J_X$. 
\end{defn}

\begin{rmk}
	The operators $U_g$ on $X_A$ are typically not $A$-linear due to condition \ref{1}.
\end{rmk}

\begin{lemma}
	If $(U,\alpha)$ is an action of $G$ on the right module $X_A$, then there is an induced strongly continuous action $\ol{\alpha} \colon G \to \Aut(\End_A^0(X))$ defined by $\ol{\alpha}_g(T) \coloneqq \Ad_{U_g} (T) = U_g T U_{g^{-1}}$. For rank-1 operators $\ol{\alpha}_g(\Theta_{x,y}) = \Theta_{U_gx,U_gy}$. 
\end{lemma}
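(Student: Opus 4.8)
The plan is to verify in order: (a) each $\ol{\alpha}_g$ is a $*$-automorphism of $\End_A(X)$; (b) $g \mapsto \ol{\alpha}_g$ is a group homomorphism; (c) $\ol{\alpha}_g(\Theta_{x,y}) = \Theta_{U_g x, U_g y}$, whence $\ol{\alpha}_g$ restricts to an automorphism of $\End_A^0(X)$; and (d) this restriction is strongly continuous. For (a), note first that since $U$ is a group action by invertible isometries we have $U_{g^{-1}} = U_g^{-1}$, so $U_g T U_{g^{-1}}$ is a well-defined $\C$-linear map on $X$ for $T \in \End_A(X)$. Applying the module-compatibility condition for $g^{-1}$ and then for $g$ shows $U_g T U_{g^{-1}}$ is $A$-linear, and a short computation with the inner-product condition gives, for $x,y \in X$,
\[ (U_g T U_{g^{-1}} x \mid y)_A = \alpha_g\big( (T U_{g^{-1}} x \mid U_{g^{-1}} y)_A \big) = \alpha_g\big( (U_{g^{-1}} x \mid T^* U_{g^{-1}} y)_A \big) = (x \mid U_g T^* U_{g^{-1}} y)_A , \]
so $U_g T U_{g^{-1}} \in \End_A(X)$ with adjoint $U_g T^* U_{g^{-1}}$.

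Consequently $\ol{\alpha}_g \colon \End_A(X) \to \End_A(X)$ is linear, multiplicative (using $U_{g^{-1}} U_g = \Id$), and $*$-preserving, i.e.\ a $*$-homomorphism; together with $U_{gh} = U_g U_h$ this gives (b), namely $\ol{\alpha}_{gh} = \ol{\alpha}_g \ol{\alpha}_h$, and in particular $\ol{\alpha}_{g^{-1}}$ is an inverse for $\ol{\alpha}_g$, so $\ol{\alpha}_g$ is a $*$-automorphism and hence isometric. For (c), unwinding definitions and using the rearrangement $(y \mid U_{g^{-1}} z)_A = \alpha_{g^{-1}}\big( (U_g y \mid z)_A \big)$ of the inner-product condition, one computes for $z \in X$
\[ \ol{\alpha}_g(\Theta_{x,y})(z) = U_g\big( x \cdot (y \mid U_{g^{-1}} z)_A \big) = U_g(x) \cdot \alpha_g\!\big( \alpha_{g^{-1}}((U_g y \mid z)_A) \big) = U_g(x) \cdot (U_g y \mid z)_A = \Theta_{U_g x, U_g y}(z). \]
Hence $\ol{\alpha}_g$ maps $\End_A^{00}(X)$ onto itself, and being isometric it restricts to an automorphism of $\End_A^0(X) = \ol{\End_A^{00}(X)}$.

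Finally, for (d): by the usual $\varepsilon/3$ argument, using density of $\End_A^{00}(X)$ in $\End_A^0(X)$ and $\|\ol{\alpha}_g\| = 1$, it suffices to prove that $g \mapsto \ol{\alpha}_g(T)$ is continuous for $T$ of rank one, and then by bilinearity of $\Theta$ for $T = \Theta_{x,y}$, where by (c) we have $\ol{\alpha}_g(\Theta_{x,y}) = \Theta_{U_g x, U_g y}$. This continuity follows from strong continuity of $U$ (so $g \mapsto U_g x$ and $g \mapsto U_g y$ are continuous) together with joint norm-continuity of $(x',y') \mapsto \Theta_{x',y'}$, via $\|\Theta_{x',y'}\| \le \|x'\|\,\|y'\|$ and $\Theta_{x',y'} - \Theta_{x'',y''} = \Theta_{x'-x'', y'} + \Theta_{x'', y'-y''}$. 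The only step requiring genuine care is (a)--(c): since the $U_g$ are \emph{not} $A$-linear, one must track the twist by $\alpha_g$ at each stage, and the computations close up precisely because the module and inner-product conditions carry matching copies of $\alpha_g$ and $\alpha_{g^{-1}}$; the rest is routine.
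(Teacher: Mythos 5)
Your proof is correct: the verification that $\Ad_{U_g}$ is a $*$-automorphism of $\End_A(X)$, the rank-one identity $\ol{\alpha}_g(\Theta_{x,y})=\Theta_{U_gx,U_gy}$ via the twisted module and inner-product conditions, and the $\varepsilon/3$ reduction of strong continuity to rank-one operators are all sound, and you correctly confine the strong-continuity claim to $\End_A^0(X)$ rather than all of $\End_A(X)$. The paper states this lemma without proof, treating it as routine, so there is no argument to compare against; yours is the standard verification and fills that gap adequately.
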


An action of a group on a correspondence induces a ``second quantised'' action on both the associated Toeplitz and Cuntz--Pimsner algebras. 
This is an immediate consequence of the universal properties of both Toeplitz and Cuntz--Pimsner algebras. 
\begin{lemma}[cf. \cite{LN}]
\label{lem:LN}
	If $(U,\alpha)$ if an action of $G$ on an $A$-correspondence  $(\phi,{}_A X_A)$, then there is an induced action $\sigma \colon G \to \Aut(\cT_X)$ on the Toeplitz-Pimsner algebra such that
	\[
	\sigma_g(\underline{\iota}_A(a)) = \underline{\iota}_A(\alpha_g(a)) \quad \text{and} \quad \sigma_g(\underline{\iota}_X(x)) = \underline{\iota}_X({U_gx})
	\]
	for all $g \in G$, $a \in A$, and $x \in X$. If the action $(U,\alpha)$ is covariant, then $\sigma$ descends to an action $\sigma \colon G \to \Aut(\cO_X)$. 
\end{lemma}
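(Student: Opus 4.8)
The plan is to let the universal property of the Toeplitz algebra do the work. Fix $g\in G$ and consider the pair $(\underline{\iota}_A\circ\alpha_g,\ \underline{\iota}_X\circ U_g)$. First I would check that this is again a representation of $(\phi,{}_AX_A)$ in $\cT_X$, i.e.\ that it satisfies the three axioms of \cref{def:cozzymorph} with target the identity correspondence over $\cT_X$. Each axiom reduces to the corresponding identity for $(\underline{\iota}_A,\underline{\iota}_X)$ combined with one compatibility property of $(U,\alpha)$: axiom \ref{i:cozzymorph1} uses $(U_g\xi\mid U_g\eta)_A=\alpha_g((\xi\mid\eta)_A)$ together with $\underline{\iota}_X(x)^*\underline{\iota}_X(y)=\underline{\iota}_A((x\mid y)_A)$; axiom \ref{i:cozzymorph2} uses $U_g(\xi\cdot a)=U_g(\xi)\alpha_g(a)$; and axiom \ref{i:cozzymorph3} uses $U_g\phi(a)=\phi(\alpha_g(a))U_g$. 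By the universal property of $\cT_X$ there is then a $*$-homomorphism $\sigma_g\coloneqq(\underline{\iota}_A\circ\alpha_g)\times(\underline{\iota}_X\circ U_g)\colon\cT_X\to\cT_X$ satisfying the two stated formulas on the generators $\underline{\iota}_A(A)$ and $\underline{\iota}_X(X)$. Since a $*$-homomorphism on $\cT_X=C^*(\underline{\iota}_A,\underline{\iota}_X)$ is determined by its values on these generators, the uniqueness clause of the universal property immediately gives $\sigma_{gh}=\sigma_g\circ\sigma_h$ and $\sigma_e=\Id$, so each $\sigma_g$ is an automorphism with inverse $\sigma_{g^{-1}}$ and $\sigma\colon G\to\Aut(\cT_X)$ is a group homomorphism.

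Next I would verify strong continuity. Because each $\sigma_g$ is isometric, it suffices to show $g\mapsto\sigma_g(t)$ is continuous for $t$ in a generating set, after which the general case follows from the usual $\eps/3$ argument: finite $*$-polynomials in $\underline{\iota}_A(A)\cup\underline{\iota}_X(X)$ are dense and the $\sigma_g$ are uniformly bounded. For $t=\underline{\iota}_A(a)$ we have $\sigma_g(t)=\underline{\iota}_A(\alpha_g(a))$, continuous in $g$ since $\alpha$ is strongly continuous and $\underline{\iota}_A$ is norm-decreasing; for $t=\underline{\iota}_X(x)$ we have $\sigma_g(t)=\underline{\iota}_X(U_gx)$, continuous in $g$ since $U$ is strongly continuous and $\underline{\iota}_X$ is norm-decreasing (indeed $\|\underline{\iota}_X(y)\|^2=\|\underline{\iota}_A((y\mid y)_A)\|\le\|y\|^2$). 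This gives the action $\sigma\colon G\to\Aut(\cT_X)$.

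For the last statement, assume $(U,\alpha)$ is covariant, i.e.\ $\alpha_g(J_{\phi_X})=J_{\phi_X}$ for all $g$. Recall $\cO_X$ is the quotient of $\cT_X$ by the ideal $I$ generated by $\{\underline{\iota}_X^{(1)}(\phi(a))-\underline{\iota}_A(a):a\in J_{\phi_X}\}$, so it is enough to prove $\sigma_g(I)=I$. From $\sigma_g(\underline{\iota}_X(x)\underline{\iota}_X(y)^*)=\underline{\iota}_X(U_gx)\underline{\iota}_X(U_gy)^*$, density of finite-rank operators, and continuity, one gets $\sigma_g\circ\underline{\iota}_X^{(1)}=\underline{\iota}_X^{(1)}\circ\Ad_{U_g}$ on $\End^0_A(X)$. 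Hence for $a\in J_{\phi_X}$,
\[
\sigma_g\big(\underline{\iota}_X^{(1)}(\phi(a))-\underline{\iota}_A(a)\big)
=\underline{\iota}_X^{(1)}\big(\Ad_{U_g}(\phi(a))\big)-\underline{\iota}_A(\alpha_g(a))
=\underline{\iota}_X^{(1)}\big(\phi(\alpha_g(a))\big)-\underline{\iota}_A(\alpha_g(a)),
\]
where I used $U_g\phi(a)U_{g^{-1}}=\phi(\alpha_g(a))$. Since $\alpha_g(a)\in J_{\phi_X}$, this is again a generator of $I$; as $\sigma_g$ is a $*$-automorphism we get $\sigma_g(I)\subseteq I$, and applying the same to $g^{-1}$ gives equality. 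Thus $\sigma_g$ descends to an automorphism of $\cO_X$, and strong continuity passes to the quotient, yielding $\sigma\colon G\to\Aut(\cO_X)$.

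I do not expect a genuine obstacle here: the universal property of $\cT_X$ is the engine, and everything else is bookkeeping. The two points that need a little care are (a) checking that the three correspondence-morphism axioms for $(\underline{\iota}_A\circ\alpha_g,\underline{\iota}_X\circ U_g)$ really do follow from the defining properties of an action on a correspondence — in particular axiom \ref{i:cozzymorph3} genuinely requires the intertwining $U_g\phi(a)=\phi(\alpha_g(a))U_g$, which is also what makes the covariance ideal step go through — and (b) upgrading continuity on generators to strong continuity on all of $\cT_X$ and $\cO_X$.
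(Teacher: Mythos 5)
Your proof is correct and follows exactly the route the paper indicates: the paper offers no detailed argument, stating only that the lemma is an immediate consequence of the universal properties of $\cT_X$ and $\cO_X$, and your write-up is precisely that argument fleshed out (representation $(\underline{\iota}_A\circ\alpha_g,\underline{\iota}_X\circ U_g)$, uniqueness for the group law, density for strong continuity, and invariance of the covariance relations for the descent to $\cO_X$).
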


\begin{example}
The action of the circle $\mathbb{T}$ on a correspondence $(\phi,{}_AX_A)$ defined by
\[
U_z(x)=zx,\qquad \alpha_z(a)=a,\qquad x\in X,\ \ a\in A,\ \ z\in \mathbb{T}
\]
happens to have each $U_z$ adjointable, and induces the gauge actions on $\T_X$ and $\cO_X$.
\end{example}
	
\begin{defn}
\label{def:gauge-ME}
Let $A$ and $B$ be $C^*$-algebras and suppose that $\gamma^A\colon G\curvearrowright A$ and $\gamma^B\colon G\curvearrowright B$ are strongly continuous actions of a locally compact Hausdorff group $G$.
Following Combes~\cite{Combes}, we say that $\gamma^A$ and $\gamma^B$ are \emph{Morita equivalent} if: 
\begin{enumerate}
\item there is an $A$-$B$-imprimitivity bimodule ${}_AX_B$; 
\item there is a strongly continuous action of $G$ on  $X$ by $\C$-linear isometries $U_g$; and
\item the above action of $G$ on $X$ restricts to an action on $A = \End^0_B(X)$ which coincides with $\gamma_A$,
  and it restricts to an action on $B = \End^0_A(X)$ which coincides with $\gamma_B$. 
\end{enumerate}
Equivalently, $\gamma^A$ and $\gamma^B$ are Morita equivalent if there exists a $C^*$-algebra $C$ such that $A$ and $B$ are (isomorphic to)
complementary full corners in $C$, and $C$ admits an action $\gamma^C$ such that $\gamma^A$ is $\gamma^C|_A$ and $\gamma^B$ is $\gamma^C|_B$, cf, \cite[Section 4]{Combes}.
\end{defn}

\subsection{Frames}
\label{subsec:frame}

An important technical and computational tool for Hilbert {$C^*$-modules} is the concept of a frame. 
This is as close as one can get to an orthonormal basis in a $C^*$-module, and it serves similar purposes.
In fact, Kajiwara, Pinzari, and Watatani refer to frames as bases, see~\cite{KPW04}.
In the signal analysis literature, see for instance \cite{FL,L}, what we call a frame is also known as a \emph{standard normalised tight frame}.

\begin{defn}
  \label{defn:frame}
  Let $X_A$ be a right $A$-module.
  A (right) \emph{countable frame} for $X_A$ is a sequence $(x_j)_{j\in \N}$ in $X_A$ such that $\sum_{j =1}^{\infty}\Theta_{x_j,x_j}$ converges strictly to the identity operator in $\End_A(X)$. 
  Equivalently, we have $x = \sum_{j=1}^\infty \Theta_{x_j, x_j} x$ for all $x \in X_A$ with the sum converging in norm.
\end{defn}
For the strict topology, see \cite{Lance}, but for our purposes it is enough to know that the strict topology coincides with the $*$-strong topology on bounded sets.

If $(x_j)_{j \in \N}$ is a frame for $X_A$, then $X_A$ is generated as a right $A$-module by $x_j$, so $X$ is countably generated.
Conversely, any countably generated $C^*$-module over a $\sigma$-unital $C^*$-algebra $A$ admits a countable frame, cf.~\cite[Proposition 2.1]{KPW04}. 

The following result is well-known to experts, but we were unable to find a reference. 
As the proof is non-trivial, we include it for completeness. 
We  thank Bram Mesland for helpful suggestions.

\begin{prop}\label{prop:tensor_product_of_frames}
	Let $X_A$ be a countably generated right Hilbert $A$-module and let $(\phi,{}_A Y_B)$ be a countably generated $A$--$B$-correspondence. 
    Let $(x_i)_{i \in \N}$ be a countable frame for $X_A$ and let $(y_j)_{j \in \N}$ be a countable frame for $Y_B$. Then $(x_i \otimes y_j)_{i,j \in \N}$ is a countable frame for $X \otimes_{\phi} Y$.
\end{prop}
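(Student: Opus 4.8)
The plan is to realise $X\otimes_\phi Y$ as the interior tensor product, whose $B$-valued inner product is determined on elementary tensors by $(x\otimes y\mid x'\otimes y')_B=\bigl(y\mid\phi((x\mid x')_A)y'\bigr)_B$, and then to show that the net of finite partial sums $S_F:=\sum_{(i,j)\in F}\Theta_{x_i\otimes y_j,\,x_i\otimes y_j}$, indexed by the finite subsets $F\subseteq\N\times\N$ ordered by inclusion, increases and converges strictly to the identity operator of $X\otimes_\phi Y$. By the reformulation in \cref{defn:frame} this is exactly the assertion that $(x_i\otimes y_j)_{i,j}$ is a countable frame, so it suffices to prove that $S_F z\to z$ in norm for every $z$. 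There are two ingredients: a uniform bound $S_F\le 1$, and the pointwise convergence.

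For the uniform bound I would first establish, for each fixed $i$ and each finite $D\subseteq\N$, the operator inequality
\[
\sum_{j\in D}\Theta_{x_i\otimes y_j,\,x_i\otimes y_j}\ \le\ \Theta_{x_i,x_i}\otimes 1 \qquad\text{in }\End_B(X\otimes_\phi Y),
\]
where $T\mapsto T\otimes 1$ is the unital $*$-homomorphism $\End_A(X)\to\End_B(X\otimes_\phi Y)$ associated with the interior tensor product \cite{Lance}. Since the difference $R$ of the two sides is bounded and self-adjoint and the map $z\mapsto(z\mid Rz)_B$ is norm continuous with $B^+$ closed, it suffices to check $(z\mid Rz)_B\in B^+$ on the dense set of elementary tensors $z=x\otimes y$; there a short computation reduces it to $\phi((x_i\mid x)_A)^*\bigl(1-\sum_{j\in D}\Theta_{y_j,y_j}\bigr)\phi((x_i\mid x)_A)\ge 0$, which holds because $\sum_{j\in D}\Theta_{y_j,y_j}\le 1$, as $(y_j)_j$ is a frame for $Y$. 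Given any finite $F$, choose finite $E,D$ with $F\subseteq E\times D$; summing the inequality over $i\in E$ yields $S_F\le S_{E\times D}\le\bigl(\sum_{i\in E}\Theta_{x_i,x_i}\bigr)\otimes 1\le 1$, using $\sum_{i\in E}\Theta_{x_i,x_i}\le 1$ (as $(x_i)_i$ is a frame for $X$) and positivity of the ampliation.

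For the convergence, the uniform bound $\|S_F\|\le 1$ and a standard $\varepsilon/3$-argument reduce the claim to $z$ in the dense span of elementary tensors, and then by linearity to a single $w=x\otimes y$. From $0\le 1-S_F\le 1$ one gets $\|(1-S_F)w\|^2\le\|(w\mid w)_B-(w\mid S_F w)_B\|$, so it is enough to show $(w\mid S_F w)_B\to(w\mid w)_B$ in norm. Along the cofinal family of rectangles $F=E\times D$ one computes
\[
(w\mid S_{E\times D}w)_B=\sum_{i\in E}\bigl(y\mid\phi((x_i\mid x)_A)^*\bigl(\textstyle\sum_{j\in D}\Theta_{y_j,y_j}\bigr)\phi((x_i\mid x)_A)\,y\bigr)_B ;
\]
letting $D\nearrow\N$ and using that $(y_j)_j$ is a frame for $Y$, this converges in norm to $\bigl(y\mid\phi\bigl((x\mid(\sum_{i\in E}\Theta_{x_i,x_i})x)_A\bigr)y\bigr)_B$, and letting $E\nearrow\N$ and using that $(x_i)_i$ is a frame for $X$, this in turn converges in norm to $\bigl(y\mid\phi((x\mid x)_A)y\bigr)_B=(w\mid w)_B$. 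Since $(w\mid S_F w)_B$ is increasing in $B^+$ and bounded above by $(w\mid w)_B$, tracking this iterated limit produces, for each $\varepsilon>0$, a rectangle $E_0\times D_0$ with $\|(w\mid w)_B-(w\mid S_{E_0\times D_0}w)_B\|<\varepsilon$; monotonicity then gives $\|(w\mid w)_B-(w\mid S_F w)_B\|<\varepsilon$ for all $F\supseteq E_0\times D_0$, which is what was needed.

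The one genuinely delicate point is the bookkeeping of the two independent summations. There is no operator ``$1\otimes\Theta_{y_j,y_j}$'' on $X\otimes_\phi Y$, because $\Theta_{y_j,y_j}$ need not commute with the left action $\phi$, and there is no naive Fubini for the double series; so the argument must be arranged to collapse the $j$-summation first, against the right-module structure of $Y$, via the key inequality $\sum_{j\in D}\Theta_{x_i\otimes y_j,x_i\otimes y_j}\le\Theta_{x_i,x_i}\otimes 1$, and only afterwards the $i$-summation, against the left-hand copy of $X$. I note that no nondegeneracy or regularity hypothesis on $(\phi,{}_AY_B)$ is required.
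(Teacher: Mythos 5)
You follow essentially the paper's route (a uniform bound on rectangular partial sums, convergence on elementary tensors, then monotonicity to pass to arbitrary finite subsets; compare \cref{lem:compactbound,lem:postiveorder}), but the justification of your key operator inequality does not stand as written. To prove $\sum_{j\in D}\Theta_{x_i\ox y_j,\,x_i\ox y_j}\le \Theta_{x_i,x_i}\ox 1$ you argue that, the difference $R$ being bounded and self-adjoint, it suffices to check $(z\mid Rz)_B\in B^+$ for $z$ ranging over the ``dense set of elementary tensors''. Elementary tensors do not in general form a dense subset---only their linear span is dense---and, more importantly, positivity of the quadratic form on a spanning family does not imply positivity of a self-adjoint operator: on $\C^2\ox\C^2$ the tensor flip $F$ satisfies $(x\ox y\mid F(x\ox y))=|\langle x,y\rangle|^2\ge 0$ on every elementary tensor, yet $F$ is not positive. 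Since both your uniform bound $S_F\le 1$ and your monotone bookkeeping at the end rest on this inequality, this step as stated is a genuine gap.

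The inequality itself is true, and the repair is short---indeed it is exactly the device the paper uses in \cref{lem:compactbound}, there packaged as the map $\psi_N$ into $\ell^2(Y)$. For each $i$ the map $T_{x_i}\colon Y\to X\ox_\phi Y$, $y\mapsto x_i\ox y$, is adjointable with $T_{x_i}^*(x\ox y)=\phi((x_i\mid x)_A)\,y$, and $\Theta_{x_i\ox y_j,\,x_i\ox y_j}=T_{x_i}\Theta_{y_j,y_j}T_{x_i}^*$. Hence
\[
\sum_{j\in D}\Theta_{x_i\ox y_j,\,x_i\ox y_j}=T_{x_i}\Big(\sum_{j\in D}\Theta_{y_j,y_j}\Big)T_{x_i}^*\ \le\ T_{x_i}T_{x_i}^*=\Theta_{x_i,x_i}\ox 1,
\]
because $0\le\sum_{j\in D}\Theta_{y_j,y_j}\le 1$ in $\End_B(Y)$. (Equivalently, run your quadratic-form computation on a general finite sum $z$ of elementary tensors: it collapses to $\bigl(\eta\mid(1-\sum_{j\in D}\Theta_{y_j,y_j})\eta\bigr)_B\ge 0$ with $\eta=T_{x_i}^*z$.) With this fix the rest of your argument is sound and differs from the paper's only in presentation: the paper bounds the rectangular sums via the factorisation $\psi_N^*T_M\psi_N$ and passes from rectangles to arbitrary finite sets with the norm inequality of \cref{lem:postiveorder}, whereas you use operator monotonicity of the partial sums together with $(1-S_F)^2\le 1-S_F$; both yield the strict convergence required by \cref{defn:frame}.
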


To prove \cref{prop:tensor_product_of_frames} we require some technical lemmas.
\begin{lemma}\label{lem:postiveorder}
  Let $A$ be a $C^*$-algebra. Suppose $a,b \in A$ are positive elements such that $a \le b \le 1$ in the minimal unitisation $A^+$. Then for each $h \in A$ we have $\|ah - h\| \le \|bh - h\|$.
\end{lemma}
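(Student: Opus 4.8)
The plan is to convert the norm estimate into an order relation between positive elements and then read it off with the $C^*$-identity. Writing $ah - h = -(1-a)h$ and $bh - h = -(1-b)h$, the claim becomes a comparison of $\|(1-a)h\|$ and $\|(1-b)h\|$, where $1-a$ and $1-b$ are positive contractions in $A^+$ satisfying $0 \le 1-b \le 1-a \le 1$ by hypothesis. Squaring and applying the $C^*$-identity gives $\|(1-a)h\|^2 = \|h^*(1-a)^2 h\|$ and $\|(1-b)h\|^2 = \|h^*(1-b)^2 h\|$, so everything reduces to comparing the norms of the positive elements $h^*(1-a)^2 h$ and $h^*(1-b)^2 h$.

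The naive approach is to look for an operator inequality between $(1-a)^2$ and $(1-b)^2$ and then conjugate by $h$, since $x \le y$ implies $h^* x h \le h^* y h$ and hence $\|h^* x h\| \le \|h^* y h\|$ for positive $x,y$. This is exactly the step I expect to be the main obstacle: the map $t \mapsto t^2$ is \emph{not} operator monotone, so the order $1-b \le 1-a$ coming from $a \le b$ does not pass to the squares. Already for $A = \C$ one sees that the squares are ordered as $(1-b)^2 \le (1-a)^2$, the opposite of what the naive comparison would require, so the squared quantities cannot be controlled by monotonicity alone and any argument must avoid squaring the dominating factor.

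The remedy is to retain only a first power of the dominating factor. Since $1-b$ is a positive contraction one has $(1-b)^2 \le (1-b)$, and since $a \le b$ one has $(1-b) \le (1-a)$; chaining these gives the sandwich $(1-b)^2 \le (1-b) \le (1-a)$. Conjugating by $h$ and taking norms yields $\|(1-b)h\|^2 = \|h^*(1-b)^2 h\| \le \|h^*(1-a)h\|$, and a second use of the $C^*$-identity rewrites the right-hand side as $\|h^*(1-a)h\| = \|(1-a)^{1/2} h\|^2$. This delivers the estimate $\|bh - h\| \le \|(1-a)^{1/2} h\|$, which is the form in which the bound is applied in the proof of \cref{prop:tensor_product_of_frames}; specialising the same sandwich to $a = b$ recovers $\|(1-a)h\| \le \|(1-a)^{1/2} h\|$, the first-power-versus-square estimate that drives strict convergence of frame sums.
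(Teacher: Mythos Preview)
You do not prove the stated inequality, and for good reason: it is false. Your own scalar observation already supplies a counterexample --- with $A=\C$, $a=\tfrac12$, $b=\tfrac45$, $h=1$ one gets $\|ah-h\|=\tfrac12>\tfrac15=\|bh-h\|$. The paper's proof has the same defect: the step $\sup_\phi\phi(h^*(a-1)^2h)\le\sup_\phi\phi(h^*(b-1)^2h)$ would require $(1-a)^2\le(1-b)^2$, but from $a\le b$ one only gets $1-b\le 1-a$, and squaring is not operator monotone (indeed for scalars it gives the opposite order). The application in the proof of \cref{prop:tensor_product_of_frames} actually needs the \emph{reverse} conclusion $\|bh-h\|\le\|ah-h\|$, and that too fails in general: any $2\times 2$ witness to the non-monotonicity of $t\mapsto t^2$ on $[0,1]$, say $1-a=\tfrac13\bigl(\begin{smallmatrix}2&1\\1&1\end{smallmatrix}\bigr)$ and $1-b=\tfrac13\bigl(\begin{smallmatrix}1&0\\0&0\end{smallmatrix}\bigr)$, yields an $h$ with $\|(1-b)h\|>\|(1-a)h\|$.

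What you do establish, $\|bh-h\|\le\|(1-a)^{1/2}h\|$ via the chain $(1-b)^2\le 1-b\le 1-a$, is correct and is the right kind of estimate. But it does not slot directly into the application as written: there one controls $\|(1-a)T\|$, not $\|(1-a)^{1/2}T\|$, and since $\|(1-a)T\|\le\|(1-a)^{1/2}T\|$ the known bound sits on the wrong side. Your route becomes a genuine repair only after one goes back and shows $\|(1-S_{M,N})^{1/2}T\|\to 0$ for compact $T$; this does hold (for $T=\Theta_{\xi,\eta}$ one has $\|(1-S_{M,N})^{1/2}\xi\|^2=\|(\xi\mid(1-S_{M,N})\xi)_A\|\le\|\xi\|\,\|(1-S_{M,N})\xi\|\to 0$, then approximate), but you have not carried it out. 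So your instinct to avoid squaring the dominating factor is exactly right; the write-up should say plainly that the lemma as stated is false, prove the square-root estimate instead, and supply the extra convergence step needed in the application.
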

\begin{proof}
	This follows from the calculation
	\begin{align*}
		\|ah-h\|^2 &= \|(a-1)h\|^2
		= \|h^* (a-1)^2 h\|
        = \sup\{ \phi(h^* (a-1)^2 h) \}\\
		&\le \sup\{ \phi(h^*(b-1)^2 h) \}
		=\|bh-h\|^2,
	\end{align*}
	where the supremum is taken over all states $\phi$ on $A$ and $1 \in A^+$.
\end{proof}

\begin{lemma}\label{lem:compactbound}
Let $X_A$, $(\phi,{}_A Y_B)$, $(x_i)_{i \in \N}$, and $(y_i)_{i \in \N}$ be as in the statement of \cref{prop:tensor_product_of_frames}. Then for each $N,M \in \N$,
	\[
	\Big\| \sum_{i=1}^M \sum_{j=1}^N \Theta_{x_i \otimes y_j,x_i \otimes y_j}\Big\| \le 1.
	\]
\end{lemma}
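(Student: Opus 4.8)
The plan is to factor each rank-one operator $\Theta_{x_i\otimes y_j,\,x_i\otimes y_j}$ through the ``creation'' operators attached to the frame $(x_i)$ of $X_A$, which collapses the double sum into a single sum of positive operators dominated by $P_M\otimes 1$, where $P_M=\sum_{i=1}^M\Theta_{x_i,x_i}$. First I would record, for $x\in X$, the adjointable operator $T_x\colon Y\to X\otimes_{\phi}Y$ determined by $T_x(y)=x\otimes y$; by the standard theory of internal tensor products it satisfies $\|T_x\|\le\|x\|$ and $T_x^{*}(x'\otimes y')=\phi\bigl((x\mid x')_A\bigr)y'$. A short computation on elementary tensors then yields the two identities
\[
T_x\,\Theta_{y,y}\,T_x^{*}=\Theta_{x\otimes y,\,x\otimes y}
\qquad\text{and}\qquad
T_x\,T_x^{*}=\Theta_{x,x}\otimes 1 ,
\]
where $S\mapsto S\otimes 1$ denotes the ampliation $\End_A(X)\to\End_B(X\otimes_{\phi}Y)$, which is a $*$-homomorphism and hence contractive.

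Next I would set $Q_N=\sum_{j=1}^N\Theta_{y_j,y_j}\in\End_B(Y)$ and note that partial sums of a frame reconstruction operator are positive contractions: $Q_N\ge 0$ trivially, while for $y\in Y$ the element $(y\mid Q_N y)_B=\sum_{j=1}^N(y_j\mid y)_B^{*}(y_j\mid y)_B$ is dominated in $B$ by the norm-convergent total $(y\mid y)_B$, so $\|Q_N\|\le 1$ and therefore $0\le Q_N\le 1$; similarly $0\le P_M\le 1$. With these facts in hand the estimate is immediate: from $0\le 1-Q_N$ we get $T_{x_i}(1-Q_N)T_{x_i}^{*}\ge 0$, hence $T_{x_i}Q_N T_{x_i}^{*}\le T_{x_i}T_{x_i}^{*}$ for each $i$, and summing over the finite range $i=1,\dots,M$ and applying the two identities above gives
\[
0\;\le\;\sum_{i=1}^M\sum_{j=1}^N\Theta_{x_i\otimes y_j,\,x_i\otimes y_j}
\;=\;\sum_{i=1}^M T_{x_i}\,Q_N\,T_{x_i}^{*}
\;\le\;\sum_{i=1}^M T_{x_i}\,T_{x_i}^{*}
\;=\;P_M\otimes 1 .
\]
Since the ampliation is contractive, $\|P_M\otimes 1\|\le\|P_M\|\le 1$, and the displayed order inequality then yields the claimed bound $\bigl\|\sum_{i=1}^M\sum_{j=1}^N\Theta_{x_i\otimes y_j,\,x_i\otimes y_j}\bigr\|\le 1$.

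The only points requiring real care are the adjointability of $T_x$ together with the two operator identities (routine, but one must be attentive when moving scalars across the balanced tensor product and when checking that $\Theta_{x,x}\otimes 1$ is the internal ampliation), and the observation that frame partial sums are positive contractions; everything else is order calculus in the $C^*$-algebra $\End_B(X\otimes_{\phi}Y)$, so I expect no substantial obstacle. In particular \cref{lem:postiveorder} is not needed for this estimate; it will presumably enter only in the convergence half of \cref{prop:tensor_product_of_frames}.
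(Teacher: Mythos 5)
Your argument is correct, and it is a genuinely different packaging of the estimate than the one in the paper, even though both proofs ultimately rest on the same two ingredients: partial frame sums are positive contractions, and the double sum can be factored through the frame of $X_A$. The paper bundles your creation operators into a single adjointable map $\psi_N \colon X \ox_\phi Y \to \ell^2(Y)$ (whose $i$-th component is exactly your $T_{x_i}^*$ for $i \le N$), checks by hand that $\|\psi_N\| \le 1$ using $\sum_{i\le N}\Theta_{x_i,x_i} \le 1$, identifies its adjoint, and then writes the double sum as $\psi_N^* \circ T_M \circ \psi_N$ with $T_M$ acting diagonally on $\ell^2(Y)$, so the bound follows from submultiplicativity of the norm. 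You avoid the auxiliary module $\ell^2(Y)$ altogether: conjugating $Q_N \le 1$ by each $T_{x_i}$ separately and summing gives the operator inequality $0 \le \sum_{i,j}\Theta_{x_i\ox y_j, x_i \ox y_j} \le P_M \ox 1$, and contractivity of the ampliation $\End_A(X) \to \End_B(X \ox_\phi Y)$ finishes the job. Your route is arguably more elementary (no boundedness/adjointability verification for a hand-built map into $\ell^2(Y)$, just order calculus plus the standard facts $T_x^*(x'\ox y') = \phi((x\mid x')_A)y'$ and $T_xT_x^* = \Theta_{x,x}\ox 1$), while the paper's $\psi_N$ makes the factorisation a one-line identity once the map is in place. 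You are also right that \cref{lem:postiveorder} plays no role here; in the paper it is only invoked later, in the convergence argument of \cref{prop:tensor_product_of_frames}.
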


\begin{proof}
	Let $\ell^2(Y) \coloneqq \ell^2(\N) \ox_{\C} Y$ with the standard right $B$-module structure.	
    For each $N \in \N$, we wish to define a linear, right $B$-linear, map $\psi_N : X \ox_{\phi} Y \to \ell^2(Y)$ on elementary tensors by
	\[
	(\psi_N (x \otimes y))_i = \begin{cases}
		\phi((x_i \mid x)_A)  y & \text{if } i \le N;\\
		0 & \text{otherwise.}
	\end{cases}
	\]
    Observe that 
	\begin{align*}
		\|\psi_N (x \otimes y)\|_{\ell^2(Y)}^2 &= \Big\| \sum_{i=1}^N (\phi((x_i \mid x)_A) \cdot y \mid (\phi((x_i \mid x)_A) \cdot y)_B \Big\|\\
		&=\Big\| \sum_{i=1}^N (y \mid \phi( (x \mid x_i)_A (x_i \mid x)_A ) y)_B \Big\|
		= \Big\|  \Big(y \Bigm\vert \phi \Big(x \Bigm\vert \sum_{i=1}^N \Theta_{x_i,x_i} x\Big)_A y \Big)_B \Big\|\\
		&\le \|x\|^2 \|y\|^2 \Big\|\sum_{i=1}^N \Theta_{x_i,x_i}\Big\|
		\le  \|x\|^2 \|y\|^2,
	\end{align*}
	so that $\|\psi_N\| \le 1$. 
    Hence, $\psi_N$ extends to a bounded linear map $\psi_N \colon X \otimes_\phi Y \to \ell^2(Y)$.
    Observe that $\psi_N$ is adjointable with adjoint $\psi_N^* ((z_i)_i) = \sum_{i=1}^N x_i \otimes z_i.$ 
    Embedding $\psi_N$ in the ``bottom left'' corner of the $C^*$-algebra $\End_B((X \ox_{\phi} Y) \oplus \ell^2(Y))$ shows that $\|\psi_N^*\| = \|\psi_N\| \le 1$.
	
	Now for each $M \in \N$ let $T_M = \sum_{j=1}^M \Theta_{y_j,y_j}$. 
    Observe that $T_M$ acts diagonally on $\ell^2(Y)$ and that as an operator on $Y_B$ we have $\|T_M\|_{\End_B(Y)} \le 1$. 
    Then for $(z_i)_i \in \ell^2(Y)$,
	\begin{align*}
		\|T_M ((z_i)_i)\|^2
		&=\Big\| \sum_{n=1}^\infty (T_M z_i \mid T_M z_i)_B\Big\| \le   \Big\|\sum_{n=1}^\infty \|T_M\|_{\End_B(Y)}^2 ( z_i \mid z_i)_B\Big\| \\
		&\le \|T_M\| _{\End_B(Y)}^2\|(z_i)_i\|^2 \le \|(z_i)_i\|^2,
	\end{align*}
	where the first inequality follows from \cite[Proposition 1.2.]{Lance}.  Thus, $\|T_M\|_{\End_B(\ell^2(Y))} \le 1$.
	Since we can write 
	\[
	\sum_{i=1}^M \sum_{j=1}^N \Theta_{x_i \otimes y_j,x_i \otimes y_j} = \psi_N^* \circ T_M \circ \psi_N
	\] 
	the result follows.
\end{proof}

\begin{proof}[Proof of \cref{prop:tensor_product_of_frames}]
	It  suffices to show that $(\sum_{(i,j) \in \Sigma} \Theta_{x_i \otimes y_j,x_i \otimes y_j})_{\Sigma \subset\subset \N^2}	$ is an approximate identity for $\End_B^0(X \otimes_\phi Y)$, where the sequence is indexed by finite subsets $\Sigma$ of $\N^2$.	
	Fix $\varepsilon > 0$. We first claim that for each $\xi \in X \ox_{\phi} Y$ there exists $M,N \in \N$ such that 
	\begin{equation}
	\Big\| \sum_{i=1}^M \sum_{j=1}^N \Theta_{x_i \otimes y_j,x_i \otimes y_j} \xi - \xi \Big\| <\varepsilon.
	\label{eq:approx}
	\end{equation}
	It suffices to consider the case where $\xi = \eta \otimes \zeta$ for some $\eta \in X_A$, $\zeta \in Y_B$. Take $M$ large enough so that 
	\[
	\Big\|\sum_{i=1}^M \Theta_{x_i,x_i} \eta - \eta \Big\| < \frac{\varepsilon}{2}
	\]
	and take $N$ large enough so that 
	\[
	\Big\|\sum_{j=1}^N \Theta_{y_j,y_j} \phi((x_i\mid \eta)_A) \zeta - \phi((x_i\mid \eta)_A)  \zeta \Big\| < \frac{\varepsilon}{2M}
	\]
	for all $1 \le i \le M$. It follows that
	\begin{align*}
		&\Big\| \sum_{i=1}^M \sum_{j=1}^N \Theta_{x_i \otimes y_j,x_i \otimes y_j} \xi - \xi \Big\|
		= \Big\|\sum_{i=1}^M \sum_{j=1}^N x_i \otimes y_j \cdot (y_j \mid \phi((x_i \mid \eta)_A) \zeta)_B - \eta \otimes \zeta \Big\|\\
		&\le \Big\|\sum_{i=1}^M \sum_{j=1}^N x_i \otimes y_j \cdot (y_j \mid \phi((x_i \mid \eta)_A) \zeta)_B - \sum_{i=1}^M x_i \otimes \phi((x_i \mid \eta)_A) \zeta \Big\|\\
		&\quad+ \Big\|\sum_{i=1}^M x_i \otimes \phi((x_i \mid \eta)_A ) \zeta - \eta \otimes \zeta\Big\|\\
		&\le \sum_{i=1}^M \|x_i\| \,\Big\|\sum_{j=1}^N y_j \cdot (y_j \mid \phi((x_i \mid \eta)_A)  \zeta)_B - \phi((x_i \mid \eta)_A)  \zeta \Big\| + \frac{\varepsilon}{2}
		< \varepsilon.
	\end{align*}	
	We now claim that for each $T \in \End_B^0(X \ox_{\phi} Y)$ there is a sequence $(M_k,N_k)_{k=1}^\infty$ in $\N^2$, with each of $(M_k)_k$ and $(N_k)_k$ strictly increasing, such that 
	\begin{equation}
	\sum_{i=1}^{M_k}\sum_{j=1}^{N_k} \Theta_{x_i \otimes y_j,x_i \otimes y_j} T \to T
	\label{eq:cvge}
	\end{equation} 
	as $k \to \infty$. If $T$ is a rank-one operator, then \eqref{eq:cvge} holds for $T$, as follows from the claim \eqref{eq:approx}, which we have proved. By taking finite sums, the claim is also true for finite-rank $T$.
	
	Fix $\varepsilon > 0$. Suppose that $T \in \End_B^0(X \otimes_\phi Y)$ is arbitrary, and take a finite rank operator $S$ such that $\|T - S\| < \frac{\varepsilon}{3}$. Let $M, N \in \N$ be such that $\| \sum_{i=1}^M \sum_{j=1}^N \Theta_{x_i \otimes y_j,x_i \otimes y_j} S - S\| < \frac{\varepsilon}{3}$. Then \cref{lem:compactbound} implies that,
	\begin{align*}
		&\Big\| \sum_{i=1}^M \sum_{j=1}^N \Theta_{x_i \otimes y_j,x_i \otimes y_j} T - T \Big\| \\
		&\le \Big\| \sum_{i=1}^M \sum_{j=1}^N \Theta_{x_i \otimes y_j,x_i \otimes y_j}\Big\| \|T-S\|
		+ \Big\| \sum_{i=1}^M \sum_{j=1}^N \Theta_{x_i \otimes y_j,x_i \otimes y_j} S - S\Big\| 
		+ \|T - S\|
		<\varepsilon.
	\end{align*}	
    To finish, fix $\varepsilon > 0$, let $T \in \End_C^0(X \otimes_\phi Y)$, and take $K$ large enough so that 
	\[
	\Big\|\sum_{i=1}^{M_K}\sum_{j=1}^{N_K} \Theta_{x_i \otimes y_j,x_i \otimes y_j} T - T\Big\| < \varepsilon.
	\]
	\cref{lem:postiveorder} shows that for any finite set $\Sigma \subseteq \N^2$ with $\{(i,j) \mid 1 \le i \le M_k, 1 \le j \le N_k\} \subseteq \Sigma$ we have $\|\sum_{(i,j) \in \Sigma}\Theta_{x_i \otimes y_j,x_i \otimes y_j} T - T\| < \varepsilon$. Consequently, $(\sum_{(i,j) \in \Sigma} \Theta_{x_i \otimes y_j,x_i \otimes y_j})_{\Sigma \subset\subset \N^2}$ is an approximate identity for $\End_C^0(X \ox_{\phi} Y)$, and $(x_i\ox y_j)_{i,j}$ is a frame for $X\ox_\phi Y$.
\end{proof}

\subsection{Topological graphs}
Topological graphs and their $C^*$-algebras were introduced by Katsura~\cite{Katsura2004-trans} as a  generalisation of directed graphs and their $C$*-algebras. 
Any (partially defined) local homeomorphism on a locally compact Hausdorff (sometimes known as a \emph{Deaconu--Renault system}) space may be interpreted as a topological graph
and, in turn, any topological graph admits a boundary path space whose shift map gives a Deaconu--Renault system. The $C^*$-algebra of the topological graph is  $*$-isomorphic to the $C^*$-algebra of its associated Deaconu--Renault system, cf. \cite{Katsura2021, Katsura2009}.

\begin{defn}
	A \emph{topological graph} $E = (E^0,E^1,r,s)$ is a quintuple consisting of second countable locally compact Hausdorff spaces $E^0$ of \emph{vertices} and $E^1$ of \emph{edges}, 
	together with a continuous \emph{range} map $r \colon E^1 \to E^0$, and a local homeomorphism $s \colon E^1 \to E^0$ called the \emph{source}. 
	
	Two topological graphs $E = (E^0,E^1,r_E,s_E)$, $F = (F^0,F^1,r_F,s_F)$ are (graph) isomorphic if there are homeomorphisms
	\[
	\mu\colon E^0\to F^0\quad\mbox{and}\quad \nu\colon E^1\to F^1
	\]
	such that $\mu\circ s_E=s_F\circ \nu$ and $\mu\circ r_E=r_F \circ \nu$.
	
	If $E^0$ and $E^1$ are countable discrete sets, then $E$ is a \emph{directed graph}. 
\end{defn}

\begin{rmk}
	The term \emph{topological graph} is sometimes also used to refer to the more general notion of a \emph{topological quiver}  (see \cite{MT}).
	In a topological quiver the condition that $s$ is a local homeomorphism is weakened to $s$ being an open map with the additional requirement of a compatible family of measures on the fibres of $s$. 
    We do not work in this generality.
\end{rmk}

In~\cite[Section 4]{Katsura2021}, Katsura studies the boundary path space $E_\infty = (E_\infty^0, E_\infty^1, r_\infty, s_\infty)$ of a topological graph $E$.
This is again a topological graph and $\sigma_E \coloneqq s_\infty \colon E_\infty^1 \to E_\infty^0$ is a partially defined local homeomorphism.
Two topological graphs $E$ and $F$ are then said to be \emph{conjugate} if the Deaconu--Renault systems on their boundary path spaces are conjugate,
i.e. if there is a homeomorphism $h\colon E_\infty^1 \to F_\infty^1$ such that $h\circ \sigma_E = \sigma_F \circ h$ and $h^{-1}\circ \sigma_F = \sigma_E\circ h^{-1}$.

The space of \emph{paths of length $n$} in a topological graph is the $n$-fold fibred product
\begin{align*}
	E^n 
	&\coloneqq E^1 \times_{s,r} \cdots \times_{s,r} E^1
	=\Big\{ e_1e_2\cdots e_n \in \prod^n E^1 \mid s(e_i)=r(e_{i+1}) \Big\}.
\end{align*}
equipped with the subspace topology of the product topology.

\begin{defn}
To a topological graph $E= (E^0,E^1,r_E,s_E)$, Katsura \cite{Katsura2004-trans} associates a $C_0(E^0)$-correspondence $X(E)$ as follows.
Equip $C_c(E^1)$ with the structure of a pre-$C_0(E^0)$--$C_0(E^0)$-correspondence by
\begin{align*}
	x\cdot a(e) &= x(e) a(s(e))\\
	a \cdot x(e) &= a(r(e))x(e)\\
	(x \mid y)_{C_0(E^0)}(v) &= \sum_{s(e) =v} \ol{x(e)}y(e),
\end{align*}
for all $x,y\in C_c(E^1)$, $a\in C_0(E^0)$, $e\in E^1$, and $v\in E^0$.
The completion $X(E)$ is a $C_0(E^0)$--$C_0(E^0)$-correspondence called the \emph{graph correspondence of $E$},
and the Cuntz--Pimsner algebra $\mathcal{O}_{X(E)}$ is 
called the \emph{$C^*$-algebra of the topological graph $E$}.
\end{defn}
We  fix some terminology to discuss regular and singular points of 
topological graphs.
\begin{defn}\label{defn:sets_of_vertices}
	Let $\psi \colon X \to Y$ be a continuous map between locally compact Hausdorff spaces. 
	We consider the following subsets of $Y$:
	\begin{itemize}
		\item \emph{$\psi$-sources}: $Y_{\psi-\src} \coloneqq Y \setminus \overline{\psi(X)}$
		\item \emph{$\psi$-finite receivers}:
		$\begin{aligned}[t]
			Y_{\psi-\fin} \coloneqq \{ y \in Y &: \exists \text{ a precompact open neighbourhood $V$ of $y$}\\ & \quad \text{such that $\psi^{-1}(\overline{V})$ is compact}\}
		\end{aligned}$
		\item \emph{$\psi$-infinite receivers}: $Y_{\psi-\infi} \coloneqq Y \setminus Y_{\psi-\fin}$
		\item \emph{$\psi$-regular set}: $Y_{\psi-\reg} \coloneqq Y_{\psi-\fin} \setminus \overline{Y_{\psi-\src}}$
		\item \emph{$\psi$-singular set}: $Y_{\psi-\sing} \coloneqq Y \setminus Y_{\psi-\reg} = Y_{\psi-\infi} \cup \overline{Y_{\psi-\src}}$. 
	\end{itemize}
\end{defn}

\begin{rmk}
	If $E = (E^0,E^1,r,s)$ is a topological graph then we use the range map $r \colon E^1 \to E^0$ to construct subsets of $E^0$ according to~\cref{defn:sets_of_vertices}. 
	In this context we drop the map $r$ and, for instance, write $E^0_{\reg} = E^0_{r-\reg}$.  
\end{rmk}

A topological graph $E$ is said to be \emph{regular} if $E^0_{\sing} = \varnothing$.

We recall that the behaviour of the left action $\phi \colon C_0(E^0) \to \End_{C_0(E^0)}^0(X(E))$ is reflected in the singular structure of $E$. 
In particular, the covariance ideal is given by $J_{\phi} = C_0(E^0_{\reg})$, so that regular topological graphs induce regular graph correspondences, cf. \cite{Katsura2004-trans}.

A frame for the graph correspondence is relatively easy to describe.

\begin{example}\label{ex:frame_for_topological_graph}
	Let $E = (E^0,E^1,r,s)$ be a topological graph. Since $E^1$ is second countable and locally compact its paracompact; so admits a locally finite cover $\{U_i\}_{i \in I}$ by precompact open sets such that the restrictions $s \colon U_i \to s(U_i)$ are homeomorphisms onto their image. Let $\{\rho_i\}_{i \in I}$ be a partition of unity subordinate to $\{U_i\}_{i \in I}$ and let $x_i = \rho_i^{1/2}$. We claim that $(x_i)_{i \in I}$ is a frame for $X(E)$. For each $x \in C_c(E^1)$ and $e \in E^1$,
	\[
	\sum_{i} (x_i \cdot (x_i \mid x)_{C_0(E^0)}) (e) = \sum_i \sum_{s(f) = s(e)} x_i(e) \ol{x_i(f)} x(f) = \sum_i \rho_i(e) x(e) = x(e).
	\]
	Since $x$ has compact support, finitely many of the $U_i$ cover $\supp(x)$. Hence,
	\begin{align*}
		\Big\|	\sum_{i} (x_i \cdot (x_i \mid x)_{C_0(E^0)}) - x\Big\|^2 = \sup_{v \in E^0} \sum_{s(e)= v}\Big|\sum_{i} (x_i \cdot (x_i \mid x)_{C_0(E^0)}) (e) - x(e)\Big|^2 \to 0.
	\end{align*}
	Since $C_c(E^1)$ is dense in $X(E)$ it follows that $(x_i)_{i \in I}$ is a frame for $X(E)$. 
\end{example}

\section{Strong shift equivalence}
\label{sec:sse}

Strong shift equivalence was introduced by Williams in~\cite{Williams1973} as an equivalence relation on \emph{adjacency matrices}: finite square matrices with nonnegative integral entries
in the context of shifts of finite type \cite{Lind-Marcus}.
Two adjacency matrices $\textsf{A}$ and $\textsf{B}$ are elementary strong shift equivalent if there exist rectangular matrices $\textsf{R}$ and $\textsf{S}$ with nonnegative integral entries
such that 
\[
  \textsf{A} = \textsf{R S} \quad \textrm{and} \quad \textsf{B} = \textsf{S R}. 
\]
This is not a transitive relation. 
To amend this we say that $\textsf{A}$ and $\textsf{B}$ are strong shift equivalent if there are square matrices $\textsf{A} = \textsf{A}_1,\ldots,\textsf{A}_n = \textsf{B}$ such that 
$\textsf{A}_i$ is elementary strong shift equivalent to $\textsf{A}_{i+1}$ for all $i=1,\ldots,n-1$.
The \emph{raison d'\^etre} for this equivalence relation is the following classification theorem due to Williams: recalling that
a shift of finite type may be represented by an adjacency matrix, a pair of two-sided shifts of finite type are topologically conjugate if and only if the adjacency matrices that represent the systems are strong shift equivalent.

Muhly, Pask, and Tomforde~\cite{MPT} introduce \emph{strong shift equivalence} for $C^*$-correspondences, which we recall below. 
They show that the induced Cuntz--Pimsner algebras of strong shift equivalent correspondences are Morita equivalent.
Kakariadis and Katsoulis~\cite{Kakariadis-Katsoulis} later introduced the a priori weaker notion of \emph{shift equivalence} of $C^*$-correspondences,
and similar notions were further studied by Carlsen, Dor-On, and Eilers~\cite{Carlsen-Doron-Eilers}.

\begin{defn}[{\cite[Definition 3.2]{MPT}}]
  Correspondences $(\phi_X, {}_A X_A)$ and $(\phi_Y, {}_B Y_B)$ are \emph{elementary strong shift equivalent} if there are correspondences $(\phi_R, {}_A R_B)$ and $(\phi_S, {}_B S_A)$ such that 
  \[
    X \cong R\ox_B S \quad \textrm{and} \quad Y \cong S\ox_A R.
  \]
  They are \emph{strong shift equivalent} if there are correspondences $X = X_1,\ldots,X_n = Y$ such that $X_i$ is elementary strong shift equivalent to $X_{i+1}$ for all $i=1,\ldots,n-1$.
\end{defn}

In \cite[Remark 3.6]{Carlsen-Doron-Eilers}, Carlsen, Dor-On, and Eilers observe that if adjacency matrices are strong shift equivalent in the sense of Williams,
then their $C^*$-correspondences are also strong shift equivalent in the sense of Muhly, Pask, and Tomforde. 
The converse is still not known.

In this section we show that there is a gauge equivariant Morita equivalence of the Cuntz--Pimsner algebras of strong shift equivalent correspondences in the sense of \cref{def:gauge-ME}. 
In the process, we revisit the Morita equivalence proof of \cite{MPT} and break it into a series of instructive lemmas. 
The first records how Cuntz--Pimsner algebras behave with respect to direct sums of correspondences. 
\begin{lemma}
\label{lem:covariant_sums}
  Let $(\phi_X, {}_A X_A)$ and $(\phi_Y, {}_B Y_B)$ be correspondences.
  The inclusion $(j_A,j_X)$ of $(\phi_X, {}_AX_A)$ into the $A \oplus B$-correspondence $(\phi_{X\oplus Y}, {}_{A\oplus B}X\oplus Y_{A\oplus B})$ is a covariant correspondence morphism
  that induces a gauge-equivariant and injective $*$-homomorphism $j_A \times j_X \colon \cO_X \to \cO_{X\oplus Y}$.
\end{lemma}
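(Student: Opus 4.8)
The plan is to realise $(j_A,j_X)$ --- the canonical inclusion $j_A(a)=(a,0)\in A\oplus B$, $j_X(\xi)=(\xi,0)\in X\oplus Y$ --- as a \emph{covariant} correspondence morphism with $j_A$ injective, and then quote \cref{lem:induced-gauge-invariant-homomorphism} to produce the gauge-equivariant injective $*$-homomorphism $j_A\times j_X\colon\cO_X\to\cO_{X\oplus Y}$ with no further work. So the entire content is the verification of the hypotheses of that lemma.

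First I would spell out the direct-sum correspondence: the right action $(\xi,\eta)\cdot(a,b)=(\xi\cdot a,\eta\cdot b)$, the inner product $((\xi_1,\eta_1)\mid(\xi_2,\eta_2))_{A\oplus B}=((\xi_1\mid\xi_2)_A,(\eta_1\mid\eta_2)_B)$, and the left action $\phi_{X\oplus Y}(a,b)=\phi_X(a)\oplus\phi_Y(b)$. Granting these, each of the three conditions in \cref{def:cozzymorph} for $(j_A,j_X)$ is a one-line check --- for example $(j_X(\xi)\mid j_X(\eta))_{A\oplus B}=((\xi\mid\eta)_A,0)=j_A((\xi\mid\eta)_A)$, and $j_X(\phi_X(a)\xi)=(\phi_X(a)\xi,0)=\phi_{X\oplus Y}(j_A(a))j_X(\xi)$ --- and $j_A$ is manifestly an injective $*$-homomorphism. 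Hence $(j_A,j_X)$ is an injective correspondence morphism.

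The only real bookkeeping is covariance. Here I would first record the structure of compacts on the direct sum: from $\Theta_{(\xi_1,\eta_1),(\xi_2,\eta_2)}=\Theta_{\xi_1,\xi_2}\oplus\Theta_{\eta_1,\eta_2}$ (and the observation that the off-diagonal blocks of an adjointable operator on $X\oplus Y$ must vanish, being forced to be linear over the two block actions simultaneously) one gets $\End^0_{A\oplus B}(X\oplus Y)=\End^0_A(X)\oplus\End^0_B(Y)$; combined with $\ker\phi_{X\oplus Y}=\ker\phi_X\oplus\ker\phi_Y$ this gives $J_{\phi_{X\oplus Y}}=J_{\phi_X}\oplus J_{\phi_Y}$, so in particular $j_A(J_{\phi_X})\subseteq J_{\phi_{X\oplus Y}}$. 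Next, the rank-one formula shows $j_X^{(1)}(T)=T\oplus 0$ for every $T\in\End^0_A(X)$, whence for $c\in J_{\phi_X}$
\[
j_X^{(1)}(\phi_X(c))=\phi_X(c)\oplus 0=\phi_{X\oplus Y}(c,0)=\phi_{X\oplus Y}(j_A(c)),
\]
which is precisely the covariance identity. So $(j_A,j_X)$ is a covariant correspondence morphism, and \cref{lem:induced-gauge-invariant-homomorphism} delivers the desired gauge-equivariant $*$-homomorphism $j_A\times j_X$, which is injective because $j_A$ is. I do not expect any genuine obstacle: the work is confined to elementary algebra of compacts and covariance ideals of a direct sum, and the substantive analytic input --- the gauge-invariant uniqueness theorem --- has already been absorbed into \cref{lem:induced-gauge-invariant-homomorphism}.
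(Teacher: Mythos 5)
Your proof is correct, and its overall shape is the same as the paper's: check that $(j_A,j_X)$ is a correspondence morphism, establish the covariance identity $j_X^{(1)}\circ\phi_X(c)=\phi_{X\oplus Y}\circ j_A(c)$ for $c\in J_{\phi_X}$, and then invoke \cref{lem:induced-gauge-invariant-homomorphism}, which supplies gauge-equivariance and (via the gauge-invariant uniqueness theorem) injectivity because $j_A$ is injective. The difference lies in how the covariance identity is computed. The paper picks frames for $X$ and $Y$, writes the projection $P_X=\sum_i\Theta_{j_X(x_i),j_X(x_i)}$ as a strictly convergent sum, and computes $j_X^{(1)}(\phi_X(c))=\phi_{X\oplus Y}(j_A(c))P_X=\phi_{X\oplus Y}(j_A(c))$; this is the same frame trick it reuses in \cref{lem:covariant_powers}. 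You instead identify the compacts of the direct sum, $\End^0_{A\oplus B}(X\oplus Y)\cong\End^0_A(X)\oplus\End^0_B(Y)$, via the rank-one formula $\Theta_{(\xi_1,\eta_1),(\xi_2,\eta_2)}=\Theta_{\xi_1,\xi_2}\oplus\Theta_{\eta_1,\eta_2}$, deduce $j_X^{(1)}(T)=T\oplus 0$ for every compact $T$ by continuity from rank-one operators, and read off both the covariance identity and the decomposition $J_{\phi_{X\oplus Y}}=J_{\phi_X}\oplus J_{\phi_Y}$, which makes the inclusion $j_A(J_{\phi_X})\subseteq J_{\phi_{X\oplus Y}}$ explicit (the paper leaves this implicit). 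Your route is marginally more elementary and more general: it avoids frames altogether, hence does not lean on the standing countably-generated/$\sigma$-unital assumptions, at the cost of the small structural digression on operators on $X\oplus Y$. Your remark that off-diagonal blocks of adjointable operators vanish is correct (it follows from density of $X\cdot A$ in $X$ and of $Y\cdot B$ in $Y$), though for the compacts it is not needed, since the rank-one formula already forces the block-diagonal form.
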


\begin{proof}
It is clear that $(j_A,j_X)$ is a correspondence morphism, and for covariance we must show that $j_X^{(1)} \circ \phi_X(c) = \phi_{X \oplus Y} \circ j_A(c)$ for all $c \in J_X$.
Let $(x_i)_i$ be a frame for $X$ and $(y_j)_j$ a frame for $Y$.
A frame for $X \oplus Y$ is given by the direct sum of the frames for $X$ and $Y$.
Let $P_X$ denote the projection in $\End_{A \oplus B}(X \oplus Y)$ onto $X$ so that $P_X = \sum_i \Theta_{j_X(x_i),j_X(x_i)}$, with the sum taken in the strict topology. 
It follows that 
\begin{align*}
    j_X^{(1)} \circ \phi_X(c) 
    &= \sum_i \Theta_{j_X(\phi_X(c)x_i),j_X(x_i)} 
    = \phi_{X \oplus Y}(j_A(c)) \sum_i \Theta_{j_X(x_i),j_X(x_i)} \\
    &= \phi_{X \oplus Y}(j_A(c)) P_X 
    = \phi_{X \oplus Y} \circ j_A(c),
\end{align*}
for all $c \in J_X$.
\cref{lem:induced-gauge-invariant-homomorphism} implies that the induced $*$-homomorphism $j_A \times j_X \colon \cO_X \to \cO_{X\oplus Y}$ is gauge-equivariant and injective.
\end{proof}

\begin{lemma}\label{lem:reducing_covariance_ideals}
	If $(\phi_X,{}_A X_B)$ and $(\phi_Y, {}_B Y_C )$ are $C^*$-correspondences, then $J_{\phi_{X \ox Y}} \subseteq J_{\phi_X}$. 
\end{lemma}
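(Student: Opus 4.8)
We need to show $J_{\phi_{X\otimes Y}} \subseteq J_{\phi_X}$. Recall that $J_{\phi_X} = \phi_X^{-1}(\End^0_B(X)) \cap \ker(\phi_X)^\perp$, so it suffices to show two containments separately: first, that $a \in \phi_{X\otimes Y}^{-1}(\End^0_C(X\otimes Y))$ implies $\phi_X(a) \in \End^0_B(X)$; and second, that $a \in \ker(\phi_{X\otimes Y})^\perp$ implies $a \in \ker(\phi_X)^\perp$, equivalently $\ker(\phi_X) \subseteq \ker(\phi_{X\otimes Y})$.

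The plan is as follows. The second containment is the easy one: if $\phi_X(a) = 0$ then $\phi_{X\otimes Y}(a)(\xi\otimes\eta) = (\phi_X(a)\xi)\otimes\eta = 0$ on elementary tensors, so $\phi_X(a)=0$ implies $\phi_{X\otimes Y}(a)=0$, giving $\ker(\phi_X)\subseteq\ker(\phi_{X\otimes Y})$ and hence $\ker(\phi_{X\otimes Y})^\perp\subseteq\ker(\phi_X)^\perp$ by taking annihilators (annihilators reverse inclusions). For the first containment, suppose $\phi_{X\otimes Y}(a) \in \End^0_C(X\otimes Y)$; I want to conclude $\phi_X(a) \in \End^0_B(X)$. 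The natural tool is the map $X \to \End_B(X,\ X\otimes Y)$ — or rather, one uses a frame argument. Fix a frame $(y_j)$ for $Y_C$. For $T \in \End_B(X)$ one has $T\otimes 1_Y \in \End_C(X\otimes Y)$, and if $T$ is compact then $T\otimes 1_Y$ is compact (since $\Theta_{x_1,x_2}\otimes 1_Y = \sum_j \Theta_{x_1\otimes y_j,\ x_2\otimes y_j}$ converges strictly, using Proposition \ref{prop:tensor_product_of_frames}). Conversely I would argue: the assignment $T \mapsto T\otimes 1_Y$ is an injective $*$-homomorphism $\End_B(X)\to\End_C(X\otimes Y)$ when $Y$ is full (which we may reduce to, replacing $C$ by $\overline{(Y\mid Y)_C}$), and under the standard identification it carries $\End^0_B(X)$ into $\End^0_C(X\otimes Y)$, but more importantly $T\otimes 1_Y$ compact forces $T$ compact. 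Since $\phi_{X\otimes Y}(a) = \phi_X(a)\otimes 1_Y$, compactness of the left-hand side yields $\phi_X(a)\in\End^0_B(X)$.

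The delicate point — and the main obstacle — is the claim that $T\otimes 1_Y$ compact implies $T$ compact, because $1_Y$ is generally not compact, so $\End_B(X)\otimes 1_Y$ is not literally contained in $\End^0_C(X\otimes Y)$ and one cannot simply quote a functoriality statement. One clean way around this: since $Y$ is countably generated over $\sigma$-unital $C$ we may pass to a full submodule and assume $Y$ full; then pick a state-like slice. Concretely, fix $\eta\in Y$ with $(\eta\mid\eta)_C$ an approximate unit element, or better, use that for $\eta \in Y$ the map $V_\eta\colon X \to X\otimes Y$, $V_\eta(x) = x\otimes\eta$, is adjointable with $V_\eta^* V_\eta = \phi_{?}((\eta\mid\eta)_C)$... — actually $V_\eta^*(x\otimes\eta') = x\cdot(\eta\mid\eta')_C$ is only right-$C$-linear issues aside, $V_\eta$ is bounded adjointable and $V_\eta^*(T\otimes 1_Y)V_\eta = T\cdot\phi_{\langle\cdot\rangle}((\eta\mid\eta)_C)$-type expression. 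Compressing the compact operator $T\otimes 1_Y$ by the adjointable $V_\eta$ lands back in $\End^0_B(X)$, and by running $\eta$ through a frame of $Y$ and summing strictly one recovers $T$ itself as a strict limit of compacts; since $\End^0_B(X)$ is closed and this is a strict (not norm) limit one must be slightly careful, but the compressions $V_{y_j}^*(T\otimes 1)V_{y_j}$ are individually compact and their finite sums, being dominated appropriately, converge to $T$ in a way that stays in $\End^0_B(X)$ — here Lemma \ref{lem:postiveorder} and the boundedness estimates in the style of Lemma \ref{lem:compactbound} do the work. This frame-compression argument is exactly the kind of computation the paper has already set up, so I would lean on Proposition \ref{prop:tensor_product_of_frames} and its supporting lemmas rather than reproving anything.
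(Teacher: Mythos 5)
The kernel half of your plan is fine and is exactly what the paper does. The gap is in the compactness half: the statement you set out to prove --- that $T\ox\Id_Y\in\End^0_C(X\ox Y)$ forces $T\in\End^0_B(X)$ --- is false at the level of generality of the lemma, so no frame or compression argument can deliver it. Take $A=C=\C$, $B=\C\oplus\C$, $X=H\oplus\C$ with $H$ an infinite-dimensional Hilbert space (the two summands carrying the two coordinates of $B$), $\phi_X$ the scalar action of $A=\C$, and $Y=\C$ with right $C$-action by scalars and left action $\phi_Y(b_1,b_2)=b_2$. Then $X\ox_B Y\cong\C$, so $\Id_X\ox\Id_Y$ is a rank-one projection and hence compact, while $\End^0_B(X)\cong\K(H)\oplus\C$ acts diagonally and $\Id_X=\Id_H\oplus 1$ is not compact. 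Both $\phi_X$ and $\phi_{X\ox Y}$ are injective here, so the $\ker(\phi_{X\ox Y})^{\perp}$ condition you set aside cannot rescue the claim; with $a=1$ one even gets $J_{\phi_{X\ox Y}}=\C\not\subseteq\{0\}=J_{\phi_X}$, which shows that any correct argument must use a hypothesis on $\phi_Y$ (such as injectivity of the left action, which is part of Pimsner's standing assumptions and holds in the regular situations where the paper invokes the lemma, e.g.\ in \cref{lem:tee-ox1} where $\phi_Y=\alpha$ is injective with image in $\End^0_B(B)=B$). This is also why the paper's proof is not of your shape: it cites \cite[Corollary~3.7]{Pimsner}, whose conclusion is only that $\phi_X(a)$ is compact on the submodule $X\cdot\phi_Y^{-1}(\End^0_C(Y))$, not on all of $X$, and combines this with the kernel inclusion; your intermediate claim is strictly stronger than what is available.

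Independently of this, the proposed machinery does not function. The map $V_\eta\colon X\to X\ox Y$, $x\mapsto x\ox\eta$, goes from a Hilbert $B$-module to a Hilbert $C$-module, so it is not a module map over any common coefficient algebra and has no adjoint; the compressions $V_{y_j}^*(T\ox\Id_Y)V_{y_j}$ are undefined, and there is no reason a substitute for them would be $B$-linear, let alone compact on $X$. The adjointable operators the internal tensor product does provide go the other way: $T_x\colon Y\to X\ox Y$, $T_xy=x\ox y$, with $T_x^*(x'\ox y')=\phi_Y((x\mid x')_B)y'$, and compressing $T\ox\Id_Y$ by these produces the elements $\phi_Y((x\mid Tx')_B)$ of $\End_C(Y)$ --- information is pushed into $\End_C(Y)$, not back into $\End^0_B(X)$. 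Even granting compact compressions, your recovery of $T$ from them is a strict limit, and strict limits of compacts need not be compact (the identity of $\End_B(X)$ is already a strict limit of finite frame sums); \cref{lem:postiveorder} and \cref{lem:compactbound} are norm estimates for partial frame sums applied to a fixed compact operator and cannot upgrade strict convergence to norm convergence. The reduction to $Y$ full also aims at the wrong structure: in the example above $Y$ is full over $C$, yet $T\mapsto T\ox\Id_Y$ is not even injective (it kills $\K(H)\oplus 0$); what matters is $\ker\phi_Y$ and $\phi_Y^{-1}(\End^0_C(Y))$, precisely the data in Pimsner's corollary. (Incidentally, your parenthetical claim that $T$ compact implies $T\ox\Id_Y$ compact also fails in general: $\Theta_{x,x}\ox\Id_Y=T_xT_x^*$ is compact exactly when $\phi_Y((x\mid x)_B)$ is.)
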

\begin{proof}
	It follows from \cite[Corollary 3.7]{Pimsner} that if $\phi_X(a) \ox \Id_Y \in \End_C^0(X \ox Y)$, then $\phi_X(a) \in \End_A^0(X \cdot \phi_Y^{-1} (\End_B^0(Y)))$.  It is clear that $\ker(\phi_X) \subseteq \ker(\phi_X \ox \Id_{Y})$ so $\ker(\phi_X \ox \Id_Y)^{\perp} \subseteq \ker(\phi_X)^{\perp}$. The result now follows. 
\end{proof}

Let $(\phi_X, {}_A X_A)$ be a correspondence and $N$ a positive integer. 
With $\phi_{X^{\ox N}} \coloneqq \phi_X\ox \Id_{N-1}$ the pair $(\phi_{X^{\ox N}}, X^{\ox N})$ is a correspondence over $A$.
Given a representation $(\alpha,\beta) \colon (\phi_X, {}_A X_A) \to B$ 
we denote by $\beta^{\ox N} \colon X^{\ox N} \to B$ the induced map $\beta^{\ox N} (x_1 \ox \cdots \ox x_N) = \beta(x_1) \cdots \beta(x_N)$, for all $x_1 \ox \cdots \ox x_N\in X^{\ox N}$ 
and note that $(\alpha,\beta^{\ox N})$ is a representation of $(\phi_{X^{\ox N}}, X^{\ox N})$ in $B$, \cite[\S 2]{Katsura2004-jfa}. We let $\beta^{(N)} \coloneqq (\beta^{\ox N})^{(1)} \colon \End^0_A(X^{\ox N}) \to B$. 

Our next lemma records how the Cuntz--Pimsner algebra of $(\phi_{X^{\ox N}}, X^{\ox N})$ embeds in the Cuntz--Pimsner algebra of $(\phi_{X}, X_A)$. 
However, we note that this embedding is not gauge-equivariant in the usual sense. 

\begin{lemma}\label{lem:covariant_powers}
  Let $(\phi_X, {}_A X_A)$ be a nondegenerate correspondence
  and let $(\iota_A,\iota_X)\colon (\phi_X, {}_A X_A) \to \cO_X$ be a universal representation.
  Then for each $N \in \N$, $(\iota_A,\iota_X^{\ox N}) \colon (\phi_{X^{\ox N}}, {}_A X_A^{\ox N}) \to \cO_X$ is an injective covariant representation.
  In particular, there is an induced injective $*$-homomorphism $\tau_N \colon \cO_{X^{\ox N}} \to \cO_{X}$ such that $\tau_N \circ \iota_{X^{\ox N}} = \iota_X^{\ox N}$. 
  
  Furthermore, let $\gamma \colon \bT\to \Aut(\cO_{X})$ denote the gauge action on $\cO_X$ and let $\ol{\gamma} \colon \bT \to \Aut(\cO_{X^{\ox N}})$ denote the gauge action on $\cO_{X^{\ox N}}$. Consider the $N$-th power $\ol{\gamma}^N \colon \bT \to \Aut(\cO_{X^{\ox N}})$ of the gauge action on $\cO_{X^{\ox N}}$: so $\ol{\gamma}^N_z(\iota_A(a)) = \iota_A(a)$ and $\ol{\gamma}^N_z(\iota_X(x)) = z^{N} \iota_X(x)$ for all $a \in A$ and $x \in X^{\ox N}$. Then $\tau_N \circ \ol{\gamma}^N_z = \gamma_z \circ \tau_N$ for all $z \in \bT$. 
\end{lemma}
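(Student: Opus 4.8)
The plan is to show that $(\iota_A,\iota_X^{\ox N})$ is a \emph{covariant} representation of $(\phi_{X^{\ox N}},X^{\ox N})$ in $\cO_X$, feed this into the universal property of $\cO_{X^{\ox N}}$ to produce $\tau_N$, prove injectivity via a gauge-invariant uniqueness argument, and finally read off the gauge identity on generators. That $(\iota_A,\iota_X^{\ox N})$ is a Toeplitz representation of $(\phi_{X^{\ox N}},X^{\ox N})$ was recorded just before the statement, so the real content is covariance.

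Fix $c\in J_{\phi_{X^{\ox N}}}$. Repeated application of \cref{lem:reducing_covariance_ideals} gives $c\in J_{\phi_{X^{\ox k}}}$ for all $1\le k\le N$, in particular $c\in J_{\phi_X}$, so covariance of the universal representation of $\cO_X$ yields $\iota_X^{(1)}(\phi_X(c))=\iota_A(c)$. Fix a frame $(x_i)_i$ for $X$; by \cref{prop:tensor_product_of_frames} its iterated tensor products are frames for the powers $X^{\ox k}$, and write $Q_k\in M(\cO_X)$ for the strict limit of the partial sums $\sum\iota_X^{\ox k}(\xi)\,(\iota_X^{\ox k}(\xi))^{*}$ over finite subsets of the $k$-th such frame. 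Expanding $\phi_{X^{\ox N}}(c)=\phi_X(c)\ox\Id_{X^{\ox N-1}}$ against the frame for $X^{\ox N}=X\ox X^{\ox N-1}$ and using $\iota_X^{\ox N}(x\ox\eta)=\iota_X(x)\iota_X^{\ox N-1}(\eta)$ together with $\iota_X(\phi_X(c)x)=\iota_A(c)\iota_X(x)$, one computes $\iota_X^{(N)}(\phi_{X^{\ox N}}(c))=\iota_A(c)\,Q_N$. Hence covariance reduces to the claim that every $Q_k$ is the unit of $M(\cO_X)$. For $k=1$, the covariance relation $\iota_X^{(1)}(\phi_X(a))=\iota_A(a)$ (valid for \emph{all} $a$ in the covariance ideal of $X$, which is all of $A$ when the left action is injective and by compacts) reads $\iota_A(a)Q_1=\iota_A(a)$; combined with $Q_1\iota_X(x)=\iota_X(x)$ and $\iota_X(x)Q_1=\iota_X(x)$ (the last using an approximate unit of $A$), this forces $Q_1=1_{M(\cO_X)}$, and then $Q_{k+1}=\sum_i\iota_X(x_i)Q_k\iota_X(x_i)^{*}=\sum_i\iota_X(x_i)\iota_X(x_i)^{*}=Q_1=1$ by induction. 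Substituting $Q_{N-1}=1$ gives $\iota_X^{(N)}(\phi_{X^{\ox N}}(c))=\iota_A(c)$, the desired covariance relation. I expect this to be the main obstacle: the whole argument turns on knowing that the trailing factor $\Id_{X^{\ox N-1}}$ is carried to the identity of $\cO_X$, i.e.\ that $Q_k=1$, and this is exactly the place where one must use that $X$ is well-behaved (covariance ideal all of $A$, left action by compacts) rather than merely that $(\iota_A,\iota_X^{\ox N})$ is a representation.

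The universal property of $\cO_{X^{\ox N}}$ then produces a $*$-homomorphism $\tau_N=\iota_A\times\iota_X^{\ox N}\colon\cO_{X^{\ox N}}\to\cO_X$ with $\tau_N\circ\iota_A=\iota_A$ and $\tau_N\circ\iota_{X^{\ox N}}=\iota_X^{\ox N}$. For injectivity I would invoke the gauge-invariant uniqueness theorem \cite[Theorem 6.4]{Katsura2004-jfa}: $\iota_A$ is injective, and the representation $(\iota_A,\iota_X^{\ox N})$ admits a gauge action, obtained from the canonical gauge action $\gamma$ of $\cO_X$ after the substitution $z\mapsto z^{N}$. Concretely, $C^{*}(\iota_A(A)\cup\iota_X^{\ox N}(X^{\ox N}))$ is $\bZ$-graded with $\iota_A(A)$ in degree $0$ and $\iota_X^{\ox N}(X^{\ox N})$ in degree $1$, and the reparametrised action scales $\iota_X^{\ox N}(\xi)$ by $z$ and fixes $\iota_A(A)$; hence $\tau_N$ is injective.

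For the final assertion, $\tau_N\circ\ol{\gamma}^{N}_z$ and $\gamma_z\circ\tau_N$ are both $*$-homomorphisms $\cO_{X^{\ox N}}\to\cO_X$, so it is enough to compare them on the generators of $\cO_{X^{\ox N}}$. Each fixes every $\iota_A(a)$, and on $\iota_{X^{\ox N}}(x_1\ox\cdots\ox x_N)$ the first gives $z^{N}\iota_X^{\ox N}(x_1\ox\cdots\ox x_N)$ while the second gives $\gamma_z(\iota_X(x_1)\cdots\iota_X(x_N))=z^{N}\iota_X^{\ox N}(x_1\ox\cdots\ox x_N)$ as well. Since $\cO_{X^{\ox N}}$ is generated by $\iota_A(A)$ and $\iota_{X^{\ox N}}(X^{\ox N})$, the two maps agree, so $\tau_N\circ\ol{\gamma}^{N}_z=\gamma_z\circ\tau_N$ for all $z\in\bT$.
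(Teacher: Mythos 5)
There is a genuine gap in your covariance step. You reduce covariance of $(\iota_A,\iota_X^{\ox N})$ to the claim that the multiplier projections $Q_k=\strtlim\sum\iota_X^{\ox k}(\xi)\iota_X^{\ox k}(\xi)^*$ all equal $1_{\Mult(\cO_X)}$, and your proof of $Q_1=1$ explicitly invokes that ``the covariance ideal of $X$ is all of $A$'' (i.e.\ that the left action is injective and by compacts). But \cref{lem:covariant_powers} assumes only that $(\phi_X,{}_AX_A)$ is nondegenerate --- the paper even remarks after \cref{thm:sse-equivariant-morita-equivalence} that regularity is \emph{not} required for this lemma --- and without regularity the claim $Q_1=1$ is simply false. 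Take the graph with two vertices $v,w$ and a single edge $e$ with $r(e)=v$, $s(e)=w$: the correspondence is nondegenerate, $J_{\phi}=\C p_v\neq A$, and $Q_1=s_es_e^*=p_v\neq 1$; moreover your auxiliary identity $\iota_X(x)Q_1=\iota_X(x)$ fails there, since $s_ep_v=s_ep_wp_v=0$. So the reduction to $Q_k=1$ proves strictly more than covariance requires (covariance only needs $\iota_A(c)Q_N=\iota_A(c)$ for $c$ in the ideal $J_{\phi_{X^{\ox N}}}$), and the stronger statement is unavailable at the stated level of generality; your argument establishes the lemma only under an unstated regularity hypothesis.

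The paper's proof avoids the multipliers $Q_k$ altogether. It works with compact operators supported on $X\cdot J_{\phi_{X^{\ox N}}}$: writing such a rank-one operator as $\Theta_{x\cdot a,y}$ with $a\in J_{\phi_{X^{\ox N}}}$ and expanding $\Theta_{x\cdot a,y}\ox\Id_{N-1}=\sum_i\Theta_{x\ox\phi(a)x_i,\,y\ox x_i}$ against a frame $(x_i)$ of $X^{\ox(N-1)}$, it uses the \emph{inductive} covariance relation $\iota_A(a)=\sum_i\iota_X^{(N-1)}(\Theta_{\phi(a)x_i,x_i})$ --- valid because $a$ lies in $J_{\phi_{X^{\ox(N-1)}}}$ by \cref{lem:reducing_covariance_ideals}, so $\phi_{X^{\ox(N-1)}}(a)$ is compact and all sums converge in norm --- to conclude $\iota_X^{(N)}(T\ox\Id_{N-1})=\iota_X^{(1)}(T)$ for $T\in\End_A^0(X\cdot J_{\phi_{X^{\ox N}}})$, and then covariance follows by applying this to $T=\phi_X(a)$. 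In other words, the identity you need is only ever asserted against elements of the covariance ideal, so no appeal to $J_{\phi_X}=A$ (nor to $Q_k=1$) is needed. The remainder of your argument --- the universal property producing $\tau_N$, checking $\tau_N\circ\ol{\gamma}^N_z=\gamma_z\circ\tau_N$ on generators, and injectivity via a gauge-invariant uniqueness argument for the rescaled circle action (the paper's ``local $N$-th roots'') --- matches the paper and is fine, though the rescaling step deserves the brief justification that the spectral subspaces of $C^*(\iota_A(A)\cup\iota_X^{\ox N}(X^{\ox N}))$ for $\gamma$ sit in degrees divisible by $N$, so $\gamma_w$ with $w^N=z$ gives a well-defined action.
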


\begin{proof}
  We need to verify that $(\iota_A,\iota_X^{\ox N})$ is covariant.	  
 It follows from \cref{lem:reducing_covariance_ideals} that $J_{\phi_{X^{\ox N}}} \subseteq J_{\phi_X}$. Recall that a rank-1 operator in $\End^{0}_A(X \cdot J_{\phi_{X^{\ox N}}})$ may be written in the form $\Theta_{x \cdot a, y}$ for $x,y \in X_A$ and $a \in J_{\phi_{X^{\ox N}}}$. Then $\Theta_{x \cdot a, y} \ox \Id_{N-1} \in \End_A^0(X^{\ox N})$ since $J_{\phi_{X^{\ox N}}} \subseteq J_{\phi_{X^{\ox N-1}}}$. Moreover, if  $(x_i)$ is a frame for $X^{\ox (N-1)}_A$, then
\[
\Theta_{x \cdot a, y} \ox \Id_{N-1} =  \sum_{i} \Theta_{x \ox \phi(a)x_i, y \ox x_i}.
\]
We proceed by induction, the base case being covariance of $(\iota_A,\iota_X)$, which is given. 
Suppose for induction that $(\iota_A, \iota_X^{\ox N-1})$ is covariant. 
This is equivalent to the fact that $\iota_A(a) =  \sum_i \iota_X^{(N-1)} (\Theta_{\phi_X(a) e_i,e_i})$ for all $a \in J_{X^{\ox (N-1)}}$. (cf. \cite[Remark 3.9]{Pimsner}). 
Using the inductive hypothesis at the second to last equality, it follows that for any $x,y \in X_A$ and $a \in J_{\phi_{X^{\ox N}}}$, 
\begin{align*}
\iota_X^{(N)} (\Theta_{x \cdot a, y} \ox \Id_{N-1}) 
	&= \iota_X^{(N)} \Big(\sum_i  \Theta_{x \ox \phi(a) x_i,y \ox x_i}\Big)
	= \sum_i \iota_X(x)\iota_X^{\ox {(N-1)}}(\phi(a) x_i) \iota_X^{\ox {(N-1)}}(x_i)^* \iota_X(y)^*\\
	&=  \iota_X(x) \iota_X^{(N-1)}\Big(\sum_i\Theta_{\phi(a)x_i,x_i}\Big) \iota_X(y)^*
	= \iota_X(x) \iota_A(a) \iota_X(y)^*
	= \iota_X^{(1)} (\Theta_{x \cdot a, y}). 
\end{align*}
It follows that for any $T \in \End_A^0(X \cdot J_{\phi_{X^{\ox N}}})$ we have $\iota_X^{(1)}(T) = \iota_X^{(N)}(T \ox \Id_{N-1})$.
Covariance of $(\iota_A,\iota_X)$ now implies that for all $a \in J_{\phi_{X^{\ox N}}}$,
\begin{align*}
	\iota_A(a) = \iota^{(1)}_X(\phi_X(a)) = \iota^{(N)}_X(\phi_X(a) \ox \Id_{N-1})
\end{align*}
so that $(\iota_A,\iota_X^{\ox N})$ is covariant. 
The universal property of $\cO_{X^{\ox N}}$ yields a $*$-homomorphism $\tau_N \colon \cO_{X^{\ox N}} \to \cO_{X}$ 
satisfying $\tau_N \circ \iota_A = \iota_A$ and $\tau_N \circ \iota_{X^{\ox N}} = \iota_{X}^{\ox N}$.

By considering local $N$-th roots, the fixed point algebras $\cO_{X^{\ox N}}^{\ol{\gamma}}$ and $\cO_{X^{\ox N}}^{\ol{\gamma}^N}$ can be seen to coincide. 
Moreover, it is straightforward to see that $\tau_N \circ \ol{\gamma}^N_z = \gamma_z \circ \tau_N$ for all $z \in \bT$. 
With minimal adjustments, the proof of the Gauge-Invariant Uniqueness Theorem found in \cite[Theorem~6.4]{Katsura2004-jfa} carries over to the action $\ol{\gamma}^N$, 
so since $(\iota_A, \iota_X^{\ox N})$ is an injective representation it follows that $\tau_N$ is injective.
\end{proof}

The next theorem is the main result of~\cite{MPT}: strong shift equivalent $C^*$-correspondences (that are nondegenerate and regular) have Morita equivalent Cuntz--Pimsner algebras.
Here we simply sketch the proof to make it clear that the Morita equivalence Muhly, Pask, and Tomforde construct
in fact implements a gauge-equivariant Morita equivalence.
This is certainly known (or at least anticipated) by experts but we consider it worthwhile to mention it.

\begin{thm}[{\cite[Theorem 3.14]{MPT}}] \label{thm:sse-equivariant-morita-equivalence}
  Suppose $(\phi_X, {}_A X_A)$ and $(\phi_Y, {}_B Y_B)$ are nondegenerate and regular correspondences.
  If they are strong shift equivalent, then the Cuntz--Pimsner algebras $\cO_X$ and $\cO_Y$ are gauge-equivariantly Morita equivalent.
\end{thm}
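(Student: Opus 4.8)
The plan is to reduce to an elementary strong shift equivalence and then realise $\cO_X$ and $\cO_Y$ as complementary full corners of a single ``linking'' Cuntz--Pimsner algebra carrying a circle action that restricts to the two gauge actions. Since gauge-equivariant Morita equivalence is transitive --- tensor the imprimitivity bimodules together with their $\bT$-actions --- it suffices to treat one elementary step $X\cong R\ox S$, $Y\cong S\ox R$ with $R={}_AR_B$ and $S={}_BS_A$; following \cite{MPT} I would take the whole chain, and $R,S$ in particular, to be nondegenerate and regular. Form the $(A\oplus B)$-correspondence $Z\coloneqq R\oplus S$ in which $S$ carries the right $A$-action, $R$ the right $B$-action, and $\phi_Z(a,b)=\phi_R(a)\oplus\phi_S(b)$ --- the analogue of the bipartite graph with adjacency matrix $\begin{pmatrix}0&\mathsf R\\\mathsf S&0\end{pmatrix}$. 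Then $Z$ is again nondegenerate and regular, and balancing the internal tensor product over $A\oplus B$ gives $Z^{\ox 2}\cong X\oplus Y$: the summands $R\ox S$ and $S\ox R$ survive, while the ``off-diagonal'' summands $R\ox R$ and $S\ox S$ vanish.

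Let $p,q\in M(\cO_Z)$ be the images of the central projections $1_{M(A)},1_{M(B)}\in M(A\oplus B)$ under (the extension of) $\iota_{A\oplus B}$, so $p+q=1$; write $\iota_A,\iota_B$ for the resulting compressions of $\iota_{A\oplus B}$. The defining relations give $\iota_Z(R)=p\,\iota_Z(R)\,q$, $\iota_Z(S)=q\,\iota_Z(S)\,p$, and $\iota_{A\oplus B}(A\oplus B)\subseteq p\cO_Z p\oplus q\cO_Z q$, so $\cO_Z$ carries a $\bZ/2$-grading with odd part $p\cO_Z q\oplus q\cO_Z p$. Applying \cref{lem:covariant_sums} to each summand (the mixed products vanish) identifies $\cO_{Z^{\ox 2}}\cong\cO_{X\oplus Y}\cong\cO_X\oplus\cO_Y$, and \cref{lem:covariant_powers} supplies an injective $*$-homomorphism $\tau_2\colon\cO_{Z^{\ox 2}}\to\cO_Z$ with $\tau_2\circ\iota_{A\oplus B}=\iota_{A\oplus B}$ and $\tau_2\circ\iota_{Z^{\ox 2}}=\iota_Z^{\ox 2}$. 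Since $\iota_Z^{\ox 2}(r\ox s)=\iota_Z(r)\iota_Z(s)\in p\cO_Z p$ and $\iota_Z^{\ox 2}(s\ox r)=\iota_Z(s)\iota_Z(r)\in q\cO_Z q$, tracking generators gives $\tau_2(\cO_X)=p\cO_Z p$ and $\tau_2(\cO_Y)=q\cO_Z q$. Finally, regularity of $S$ (resp.\ $R$) lets one write $\iota_B(b)$ (resp.\ $\iota_A(a)$) as a norm-convergent sum over a frame $(s_j)$ of $S$ of products $\iota_Z(\phi_S(b)s_j)\,p\,\iota_Z(s_j)^*$ (resp.\ the analogous expression with $q$), so $p$ and $q$ are full. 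Hence $\cO_X\cong p\cO_Z p$ and $\cO_Y\cong q\cO_Z q$ are complementary full corners of $\cO_Z$.

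It remains to put a $\bT$-action on $\cO_Z$ restricting to $\gamma^X$ and $\gamma^Y$. The obvious candidate $\gamma^Z$ is wrong: by the gauge-tracking part of \cref{lem:covariant_powers} (with $N=2$), $\tau_2$ intertwines $\gamma^Z_z$ with $\overline\gamma_{z^2}$, where $\overline\gamma$ is the gauge action on $\cO_{Z^{\ox 2}}\cong\cO_X\oplus\cO_Y$; thus $\gamma^Z$ restricts to the \emph{squared} actions $\gamma^X_{z^2}$ and $\gamma^Y_{z^2}$ on the two corners. To repair this, note that the gauge $\bZ$-grading $\cO_Z=\overline{\bigoplus_n\cO_Z^{(n)}}$ refines the above $\bZ/2$-grading --- a monomial has the same $\bZ$-degree parity as $\bZ/2$-degree --- so $p\cO_Z^{(n)}p=q\cO_Z^{(n)}q=0$ for $n$ odd and $p\cO_Z^{(n)}q=q\cO_Z^{(n)}p=0$ for $n$ even. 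Then
\[
\cO_Z^{[k]}\coloneqq p\cO_Z^{(2k)}p\;\oplus\;p\cO_Z^{(2k-1)}q\;\oplus\;q\cO_Z^{(2k+1)}p\;\oplus\;q\cO_Z^{(2k)}q
\]
is a $\bZ$-grading of $\cO_Z$, and its associated strongly continuous $\bT$-action $\delta$ satisfies $\delta_z|_{p\cO_Z^{(2k)}p}=z^k=\delta_z|_{q\cO_Z^{(2k)}q}$. As $\tau_2$ carries the gauge-degree-$k$ part of $\cO_X$ into $p\cO_Z^{(2k)}p$, under the identifications above $\delta$ restricts to $\gamma^X$ on $p\cO_Z p$ and to $\gamma^Y$ on $q\cO_Z q$. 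By the linking-algebra formulation of \cref{def:gauge-ME}, $(\cO_X,\gamma^X)$ and $(\cO_Y,\gamma^Y)$ are gauge-equivariantly Morita equivalent, and the general case follows by composing these equivalences along the chain.

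The only step I expect to require real thought rather than bookkeeping is this gauge mismatch: the linking algebra that naturally appears carries an action restricting to $\gamma^X,\gamma^Y$ only after a twist by the squaring map $\bT\to\bT$, so one must refine its grading (equivalently, replace the action) to absorb the twist. A secondary technical point --- and where the regularity hypothesis on $X$ and $Y$ is really used --- is arranging that $R$, $S$, and hence $p$, $q$, meet the nondegeneracy, regularity, and fullness conditions invoked above.
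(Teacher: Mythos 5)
Your skeleton is the same as the paper's: reduce to an elementary step, form the bipartite inflation $Z$ over $A\oplus B$, use \cref{lem:covariant_sums,lem:covariant_powers} to embed $\cO_X\oplus\cO_Y\cong\cO_{Z^{\ox 2}}$ into $\cO_Z$, identify the images with the complementary corners cut out by $p,q$, and use regularity (as in [MPT, Lemma~3.12]) to get fullness. The only genuine divergence is how you repair the half-speed gauge problem. The paper fixes it at the level of the correspondence: it defines $U_z(s,r)=(s,zr)$ on $Z=S\oplus R$ and second-quantises via \cref{lem:LN}, so the resulting action on $\cO_Z$ exists for free and visibly restricts to $\gamma^X$ and $\gamma^Y$ on the corners. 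You instead regrade $\cO_Z$ by interleaving the gauge $\bZ$-grading with the $\bZ/2$-grading from $p,q$. These give literally the same action (on a monomial your $\delta_z$ multiplies by $z$ raised to the net number of $R$-factors, which is exactly what the second-quantised $U$ does), so your route is correct in substance; its advantage is that it is phrased entirely inside $\cO_Z$, its cost is the one step you gloss over: an algebraic $\bZ$-grading of a $C^*$-algebra by closed subspaces with dense span does not automatically integrate to a strongly continuous $\bT$-action, so ``its associated strongly continuous $\bT$-action $\delta$'' needs an argument. Two quick fixes: (i) observe that your $\delta$ is precisely the second quantisation of $U_z(s,r)=(s,zr)$ and invoke \cref{lem:LN}, which is the paper's move; or (ii) define $\delta_z \coloneqq \gamma^Z_w\circ\Ad_{p+wq}$ for any $w$ with $w^2=z$, and use your parity observation (every nonzero corner of a spectral subspace has even total $w$-exponent) to check this is independent of the choice of square root, multiplicative, and strongly continuous, after which it acts by $z^k$ on $p\,\cO_Z^{(2k)}p$ and $q\,\cO_Z^{(2k)}q$ as required. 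With that point supplied, your proof is complete and matches the paper's in all essentials, including the use of \cref{def:gauge-ME} and transitivity along the chain.
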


\begin{proof}
It suffices to assume that $X_A$ and $Y_B$ are elementary strong shift equivalent.
Choose nondegenerate and regular correspondences $(\phi_R, {}_A R_B)$ and $(\phi_S, {}_BS_A)$ (cf.~\cite[Section 3]{MPT}) such that
\[
  X_A \cong R\ox_B S \quad \textrm{and} \quad Y_B \cong S\ox_A R.
\]
By \cref{lem:covariant_sums} we have covariant morphisms $(j_A,j_X) \colon (\phi_X,{}_AX_A) \to (\phi_{X \oplus Y}, {}_{A \oplus B} X \oplus Y_{A\oplus B})$ and $(j_B, j_Y) \colon (\phi_Y, {}_BY_B) \to (\phi_{X \oplus Y}, {}_{A \oplus B} X \oplus Y_{A\oplus B})$.

Let $Z = S \oplus R$ be the correspondence over $A\oplus B$ with the obvious right module structure and left action
$\phi_Z\colon A\oplus B \to \End_{A\oplus B}(Z)$ given by $\phi_Z(a,b)(s,r) = (\phi_S(b)s, \phi_R(a)r)$ for all $(a,b)\in A\oplus B$ and $(r,s)\in Z$.\footnote{Muhly, Pask, and Tomforde call $Z$ the \emph{bipartite inflation}.} Then $Z^{\ox 2}$ is isomorphic to $X \oplus Y$ as $A \oplus B$-correspondences by \cite[Proposition 3.4]{MPT}.

By~\cref{lem:covariant_sums,lem:covariant_powers} there are inclusions $\lambda_X \colon \cO_X \to \cO_Z$ and $\lambda_Y \colon \cO_Y \to \cO_Z$ such that the diagram
\[\begin{tikzcd}[ampersand replacement=\&]
	{\cO_X} \\
	\& {\cO_{X \oplus Y} \cong \cO_{Z^{\ox 2}}} \& {\cO_{Z}} \\
	{\cO_Y}
	\arrow["{\tau_2}", from=2-2, to=2-3]
	\arrow["{j_B \times j_Y}", from=3-1, to=2-2]
	\arrow["{j_A \times j_X}"', from=1-1, to=2-2]
	\arrow["{\lambda_X}"', bend left = 20, from=1-1, to=2-3]
	\arrow["{\lambda_Y}", bend right = 20, from=3-1, to=2-3]
\end{tikzcd}\]
commutes.

As in~\cite[Lemma 3.12]{MPT}, we may construct full complementary projections $P_X$ and $P_Y$ in the multiplier algebra $\Mult(\cO_Z)$ (using approximate identities in $A$ and $B$, respectively)
such that $\lambda_X(\cO_X) = P_X \cO_Z P_X$ and $\lambda_Y(\cO_Y) = P_Y \cO_Z P_Y$ are full and $P_X + P_Y = 1_{\Mult(\cO_Z)}$. 

For gauge equivariant Morita equivalence (see \cref{def:gauge-ME}) we will produce a circle action on $\cO_Z$ which restricts to to the gauge actions on $\cO_X$ and $\cO_Y$. The action on $\cO_Z$ will not be the gauge action, as the gauge action on $\cO_Z$ runs at `half-speed' compared to the gauge actions on $\cO_X$ and $\cO_Y$.

Define an action of $\bT$ on $Z=S\oplus R$  by
\[
U_z(s,r)=(s,zr),\qquad (s,r)\in Z,\ z\in \bT.
\]
Conjugation by the second quantisation of $U_z$ on the Fock module
of $Z$ gives an action on the Toeplitz algebra of $Z$, which descends to $\cO_Z$, \cite{LN} and \cref{lem:LN}.

Since $X=R\ox_BS$ and $Y=S\ox_AR$, we see that the induced action on 
$Z^{\ox 2}\cong X\oplus Y$ is the sum of the actions of $\bT$ on $X$ and $Y$
given on $x\in X$ and $y\in Y$ by
\[
x\mapsto zx,\qquad y\mapsto zy,\qquad z\in\mathbb{T}.
\]
These actions induce the gauge actions of $\cO_X$ and $\cO_Y$, respectively.
\end{proof}

\begin{rmk}
Regularity of correspondences is not required for \cref{lem:covariant_sums,lem:covariant_powers}. We note however, that regularity plays a crucial role in the proof of \cref{thm:sse-equivariant-morita-equivalence}, namely in constructing the projections $P_X$ and $P_Y$. There are counterexamples to \cref{thm:sse-equivariant-morita-equivalence} when either $X$ or $Y$ is not regular (see \cite{MPT}).
\end{rmk}

\begin{rmk}\label{rmk:BDR}
In our proof above, it is important that the correspondences are regular.
It is however possible to have strong shift equivalent regular correspondences where the intermediate correspondences are \emph{not} regular, 
and this is not addressed in our approach.
This gap is however rectified by recent work of Bilich, Dor-On, and Ruiz \cite{Bilich-DorOn-Ruiz}. 
They show that even if two regular correspondences are strong shift equivalence in a way where the intermediate correspondences are not regular, it is possible to find different correspondences that are regular and that implement a strong shift equivalence, see Theorem 4.15 and the comment after its proof in \cite{Bilich-DorOn-Ruiz}.
We thank Adam Dor-On for bringing our attention to this gap (and its fix).
\end{rmk}

\section{In-splits}
\label{sec:insplit}

In this section, we recall the notion of in-splits for directed graphs, and extend the notion to both topological graphs and $C^*$-correspondences. 

\subsection{In-splits for topological graphs}

Let us start by recalling the classical notion from symbolic dynamics of in-splittings.
Let $E = (E^0,E^1,r,s)$ be a countable discrete directed graph.
Fix a vertex $w \in E^0$ which is not a source (i.e. $r^{-1}(w) \neq \varnothing$) and let $\Pp = \{\Pp_i\}_{i=1}^n$ be a partition of $r^{-1}(w)$ into a finite number of nonempty sets 
such that at most one of the partition sets $\Pp_i$ is infinite. 

Following \cite[Section 5]{Bates-Pask}, the \emph{in-split graph of $E$ associated to $\Pp$} is the directed graph $E_r(\Pp)$ defined by
\begin{align*}
E_r^0(\Pp) &= \{ v_1 \mid v\in E^0, v \ne w \} \cup \{w_1,\ldots,w_n\},\\
E_r^1(\Pp) &= \{ e_1 \mid e \in E^1, s(e) \ne w \} \cup \{e_1, \ldots, e_n \mid e \in E^1, s(e) = w\},\\
r_{\Pp}(e_i) &= 
\begin{cases}
r(e)_1 &\text{if } r(e) \ne w\\
w_j & \text{if } r(e) = w \text{ and } e \in \Pp_j,
\end{cases} \\
s_{\Pp}(e_i) &= s(e)_i,
\end{align*}
for all $e_i\in E^1_r(\Pp)$.

\begin{rmk}
If $E$ is a finite graph with no sinks and no sources, then the bi-infinite paths on $E$ define a two-sided shift of finite type (an edge shift).
The in-split graph $E_r(\Pp)$ is again a finite graph with no sinks and no sources, and the pair of edge shifts are topologically conjugate.  
In fact, if $\textsf{A}$ and $\textsf{A}(\Pp)$ denote the adjacency matrices of $E$ and $E_r(\Pp)$, respectively,
then there are rectangular nonnegative integer matrices $\textsf{R}$ and $\textsf{S}$ such that $\textsf{A} = \textsf{RS}$ and $\textsf{SR} = \textsf{A}(\Pp)$.
That is, the matrices are strong shift equivalent, cf. \cite[Chapter 7]{Lind-Marcus}. 
\end{rmk}

\begin{example}\label{ex:in-split}
Consider the directed graphs
\[
\begin{tikzpicture}
[baseline=-0.25ex,
vertex/.style={
circle,
fill=black,
inner sep=1.5pt
},
edges/.style={
-stealth,
shorten >= 3pt,
shorten <= 3pt
},
scale =1]

\node[vertex] (a) at (0,0) {};%
\node[vertex] (b) at (2,0) {};%

\node[anchor= east] at (a) {\scriptsize{$w$}};
\node[anchor= west] at (b) {\scriptsize{$v$}};

\draw[edges] (b.west) -- node[anchor=south, inner sep = 2pt]{\scriptsize{$g$}} (a.east);

\draw[edges] (b.west) to [out = 225, in = -45] node[anchor=south, inner sep = 2pt]{\scriptsize{$h$}} (a.east);

\draw[edges,blue] (a.east) to [out = 45, in = -225] node[anchor=south, inner sep = 2pt]{\scriptsize{$f$}} (b.west);

\draw[edges,red] (a.north) to [out = 45, in = -225, min distance = 40pt, looseness = 10] node[anchor=south, inner sep = 2pt]{\scriptsize{$e$}} (a.north);
\end{tikzpicture}
\qquad
\text{and}
\qquad 
\begin{tikzpicture}
[baseline=-5ex,
vertex/.style={
circle,
fill=black,
inner sep=1.5pt
},
edges/.style={
-stealth,
shorten >= 3pt,
shorten <= 3pt
},
scale =1]

\node[vertex] (a) at (0,0) {};%
\node[vertex] (b) at (2,0) {};%
\node[vertex] (c) at (0,-2) {};%

\node[anchor= east] at (a) {\scriptsize{$w_1$}};
\node[anchor= west] at (b) {\scriptsize{$v_1$}};
\node[anchor= east] at (c) {\scriptsize{$w_2$}};

\draw[edges] (b.south west) -- node[anchor=south east, inner sep = 1pt]{\scriptsize{$g_1$}} (c.north east);

\draw[edges] (b.west) to node[anchor=south, inner sep = 2pt]{\scriptsize{$h_1$}} (a.east);

\draw[edges,blue] (a.east) to [out = 45, in = -225] node[anchor=south, inner sep = 2pt]{\scriptsize{$f_1$}} (b.west);
\draw[edges,blue] (c.north east) to [out = 0, in = 270] node[anchor=south east, inner sep = 1pt]{\scriptsize{$f_2$}} (b.west);

\draw[edges,red] (a.north) to [out = 45, in = -225, min distance = 40pt, looseness = 10] node[anchor=south, inner sep = 2pt]{\scriptsize{$e_1$}} (a.north);
\draw[edges,red] (c.north) to node[anchor=east, inner sep = 2pt]{\scriptsize{$e_2$}} (a.south);
\end{tikzpicture}.
\]
Note that the loop $e$ is both an incoming and an outgoing edge for $w$.
Partition $r^{-1}(w)$ into $\Pp = \{\Pp_1,\Pp_2\}$ with $\Pp_1 = \{ e,h\}$ and $\Pp_2 = \{g\}$. 
The right-most graph above is then the in-split graph with respect to $\Pp$. The outgoing edges from $w$ are coloured to highlight their duplication in the in-split graph. 

The adjacency matrices of the graph and its in-split are
\[
\textsf{A} = 
\begin{pmatrix}
1 & 1 \\
2 & 0
\end{pmatrix} \qquad \textrm{and} \qquad
\textsf{B} = 
\begin{pmatrix}
1 & 0 & 1 \\
1 & 0 & 1 \\
1 & 1 & 0
\end{pmatrix},
\]
respectively, and the rectangular matrices 
\[
\textsf{R} = 
\begin{pmatrix}
1 & 0 \\
1 & 0 \\
1 & 1
\end{pmatrix} \qquad \textrm{and} \qquad
\textsf{S} = 
\begin{pmatrix}
1 & 0 & 1 \\
0 & 1 & 0
\end{pmatrix}
\]
satisfy $\mathsf{B = RS}$ and $\mathsf{SR = A}$.
This is an example of an (elementary) strong shift equivalence.
\end{example}

Suppose $E$ is a graph and let $E(\Pp)$ be an in-split graph.
Define a finite-to-one surjection $\alpha \colon E^0_r(\Pp) \to E^0$ by $
\alpha(v_i) = v
$
for all $v\in E_r^0(\Pp)$  (forgetting the labels)
and use the partition to define a map $\psi \colon E^1 \to E_r^0$ by
\[
\psi(e) = 
\begin{cases}
r(e)_1 & \text{if } r(e) \ne w,\\
w_i & \text{if } r(e) = w \text{ and } e \in \Pp_i,
\end{cases}
\]
for all $e\in E^1$.
Note that since $w$ is not a source, $\alpha$ maps sources bijectively to sources, 
and since at most one set in $\Pp$ contains infinitely many edges, it also follows that $\alpha$ maps infinite receivers bijectively to infinite receivers.

Our first observation is that $r = \alpha \circ \psi$,  so that an in-split may be thought of as a factorisation of the range map $r \colon E^1 \to E^0$ through the new vertex set $E^0_r(\Pp)$.
For our second observation, consider the fibred product 
\[
E^1_r \coloneqq E^1 \times_{s,\alpha} E^0(\Pp) = \{ (e,v_i)\in E^1\times E^0(\Pp) : s(e) = v_i \}
\]
The map from $E^1(\Pp)$ to $E^1_r$ given by $e_i\mapsto(e,s(e)_i)$ 
induces a graph isomorphism between $E(\Pp)$ and $E_r$. 
These observations allow us to define in-splits for more general topological graphs.

\begin{defn}
\label{defn:topological_graph_insplit}
An \emph{in-split} (or \emph{range-split}) of a topological graph $E = (E^0,E^1,r,s)$ is a triple $I = (\alpha,E^0_I,\psi)$ consisting of
\begin{enumerate}
\item a locally compact Hausdorff space $E^0_I$,
\item a continuous map $\psi \colon E^1 \to E^0_I$, and
\item a continuous and proper surjection $\alpha \colon E^0_I \to E^0$ that restricts to a homeomorphism between $E^0_{I,\psi-\sing}$ and $E^0_{\sing}$,
\end{enumerate}
such that $\alpha \circ \psi = r$.
\end{defn}

\begin{rmk}
For directed graphs the continuity assumptions of an in-split $I = (\alpha, E^0_I,\psi)$ are automatic. 
The properness of $\alpha$ can be reinterpreted as requiring that $|\alpha^{-1}(v)| < \infty$ for all $v \in E^0$. 
In the case of directed graphs the notion of in-split introduced in \Cref{defn:topological_graph_insplit} directly generalises that of \cite[Section 5]{Bates-Pask} 
(with source and range maps flipped).
\end{rmk}

We associate a new topological graph to an in-split.

\begin{lemma} 
Let $E = (E^0,E^1,r,s)$ be a topological graph and let $I = (\alpha, Y,\psi)$ be an in-split of $E$.
Then $E_I = (E_I^0, E_I^1,r_I,s_I)$ is a topological graph, where 
\begin{enumerate}
\item $E^1_I \coloneqq E^1 \times_{s,\alpha} E^0_I = \{ (e,v) \in E^1 \times E^0_I \mid s(e) = \alpha(v)\}$ equipped with the subspace topology of the product $E^1 \times E^0_I$; and
\item $r_I(e,v) = \psi(e)$ and $s_I(e,v) = v$, for all $e\in E^1$ and $v\in E_I^0$.
\end{enumerate}
Moreover, $E^0_{I,r_I-\sing}$ and $E^0_{\sing}$ are homeomorphic via $\alpha$.
\end{lemma}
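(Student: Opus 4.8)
The plan is to check the defining conditions for a topological graph for $E_I = (E^0_I, E^1_I, r_I, s_I)$ — that the vertex and edge spaces are second countable, locally compact and Hausdorff, that $r_I$ is continuous, and that $s_I$ is a local homeomorphism — and then to observe that the $r_I$-singular structure of $E^0_I$ coincides with its $\psi$-singular structure, so that the final assertion is immediate from the definition of an in-split (\cref{defn:topological_graph_insplit}).

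For the underlying spaces: $E^0_I$ is locally compact Hausdorff and second countable (the latter implicit in the definition of an in-split, all spaces here living in the second countable, locally compact, Hausdorff category). The fibred product $E^1_I$ is the preimage of the diagonal of $E^0 \times E^0$ under the continuous map $E^1 \times E^0_I \to E^0 \times E^0$, $(e,v) \mapsto (s(e), \alpha(v))$; as $E^0$ is Hausdorff this realises $E^1_I$ as a closed subspace of $E^1 \times E^0_I$, hence again second countable, locally compact and Hausdorff. Writing $q \colon E^1_I \to E^1$, $(e,v) \mapsto e$, for the restricted first-coordinate projection, $r_I = \psi \circ q$ is continuous. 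The one step here with any content is that $s_I$ is a local homeomorphism: given $(e,v) \in E^1_I$, I would use that $s$ is a local homeomorphism to choose an open set $U \ni e$ in $E^1$ with $s|_U$ a homeomorphism onto the open set $s(U) \subseteq E^0$, put $W = (U \times E^0_I) \cap E^1_I$ (an open neighbourhood of $(e,v)$ in $E^1_I$), and check that $s_I|_W$ is a bijection of $W$ onto the open set $\alpha^{-1}(s(U))$ whose inverse $v' \mapsto \bigl((s|_U)^{-1}(\alpha(v')), v'\bigr)$ is continuous; injectivity of $s_I|_W$ follows from that of $s|_U$, and identifying the image with $\alpha^{-1}(s(U))$ is a short unwinding of definitions.

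The crux is the final claim. The definition of an in-split only supplies a homeomorphism $\alpha \colon E^0_{I,\psi-\sing} \to E^0_{\sing}$ expressed through $\psi$-singularity, whereas we need the statement for $r_I$-singularity; the bridge is the map $q$. I would first verify that $q \colon E^1_I \to E^1$ is a proper continuous surjection: continuity is clear, surjectivity holds because $\alpha$ is onto, and properness because for compact $K \subseteq E^1$ the set $q^{-1}(K) = E^1_I \cap (K \times E^0_I)$ lies inside $K \times \alpha^{-1}(s(K))$ — compact, using properness of $\alpha$ — and is closed in $E^1 \times E^0_I$, hence compact. Next, for any continuous $\psi$ and proper surjection $q$, the $\psi$- and $(\psi \circ q)$-source subsets of $E^0_I$ coincide because $\psi(E^1) = (\psi \circ q)(E^1_I)$; and for every precompact open $V \subseteq E^0_I$ the set $\psi^{-1}(\overline V)$ is compact if and only if $(\psi \circ q)^{-1}(\overline V) = q^{-1}(\psi^{-1}(\overline V))$ is compact — the forward implication by properness of $q$, the reverse because $\psi^{-1}(\overline V) = q(q^{-1}(\psi^{-1}(\overline V)))$ is then a continuous image of a compact set — so the $\psi$- and $(\psi \circ q)$-finite (hence infinite) receiver subsets of $E^0_I$ coincide as well. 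Since $r_I = \psi \circ q$, this gives $E^0_{I,r_I-\sing} = E^0_{I,\psi-\sing}$, and combining with the in-split axiom yields the desired homeomorphism $\alpha \colon E^0_{I,r_I-\sing} \to E^0_{\sing}$.

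I expect the only mildly delicate bookkeeping to be the homeomorphism check for $s_I|_W$ and keeping straight which of the two compactness implications uses properness of $q$ and which uses its surjectivity; everything else is formal once the factorisation $r_I = \psi \circ q$ through the proper surjection $q$ has been spotted.
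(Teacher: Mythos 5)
Your proposal is correct and follows essentially the same route as the paper: closedness of the fibred product, a direct local-homeomorphism check for $s_I$, and a comparison of the $r_I$- and $\psi$-singular subsets of $E^0_I$ using surjectivity and properness of $\alpha$, after which the in-split axiom supplies the homeomorphism. The only organizational difference is that you channel the compactness argument through the proper surjection $q\colon E^1_I \to E^1$ (which is exactly the map $\alpha_1$ the paper establishes in \cref{lem:extending_alpha_to_edges}), whereas the paper argues the two inclusions for the finite-receiver sets directly; the underlying estimates are the same.
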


\begin{proof}
The space $E_I^1$ is locally compact as a closed subspace of $E^1 \times E^0_I$ and the maps $r_I$ and $s_I$ are clearly continuous.
To see that $s_I$ is open, take open sets $U$ in $E^1$ and $V$ in $E_I^0$
and consider the basic open set $W = (U \times V) \cap E_I^1$ in $E^1_I$.
Then $s_I(W) = \alpha^{-1}(s(U)) \cap V$ which is open in $E^0_I$, so $s_I$ is open.

To see that $s_I$ is locally injective, fix $(e,v) \in E_I^1$.
Since $s$ is locally injective, there exists an open neighbourhood $U$ of $e$ in $E^1$ such that $s|_U$ is injective. 
Let $V$ be any open neighbourhood of $v$ in $E^0_I$. 
Then $W = (U \times V) \cap E_I^1$ is an open neighbourhood of $(e,v)$ in $E^1_I$.
If $(e',v'),(e'',v'') \in W$ are such that $v' = s_I(e',v') = s_I(e'',v'') = v''$, then $s(e') = \alpha(v') = s(e'')$ so that $e' = e''$. 
We conclude that $s_I$ is a local homeomorphism and so $E_I$ is a topological graph.

For the final statement we show that the $r_I$-singular and $\psi$-singular subsets of $E^0_I$ coincide, 
and then appeal to the fact that $\alpha$ restricts to a homeomorphism between $E^0_{I,\psi-\sing}$ and $E^0_{\sing}$. 
First observe that since $\alpha$ is surjective, we have $r_I(E_I^1) = \psi(E^1)$ and so $E^0_{I, r_I-\src} = E^{0}_{I,\psi-\src}$.

Now fix a precompact open set $V \subseteq E^0_I$, and observe that
\[
r_I^{-1}(\overline{V}) = \{ (e,v) \in E^1_I \mid \psi(e) \in \overline{V} \}.
\]
First suppose that $r_I^{-1}(\overline{V})$ is compact. 
Let $p_1 \colon E^1 \times_{s,\alpha}E^0 \to E^1$ denote the projection onto the first factor 
and observe that $p_1(r_I^{-1}(\overline{V})) = \psi^{-1}(\overline{V})$ since $\alpha$ is surjective.
Moreover, the set is compact since $p_1$ is continuous, so $E^0_{I,r_I-\fin} \subseteq E^0_{I,\psi-\fin}$. 

Now suppose that $\psi^{-1}(\overline{V})$ is compact. 
Since $\alpha$ is proper and $s$ is continuous, $\alpha^{-1}(s(\psi^{-1}(\overline{V})))$ is compact in $E^0_I$.
Since $E^0$ is Hausdorff, $E_I^1$ is a closed subspace of $E^1 \times E^0_I$. 
Consequently,
\begin{align*}
r_I^{-1}(\overline{V}) 
&= \psi^{-1}(\overline{V}) \times_{s,\alpha}  \alpha^{-1}(s(\psi^{-1}(\overline{V})))
= (\psi^{-1}(\overline{V}) \times  \alpha^{-1}(s(\psi^{-1}(\overline{V})))) \cap E^1_I
\end{align*}
is a closed subspace of the compact product $\psi^{-1}(\overline{V}) \times  \alpha^{-1}(s(\psi^{-1}(\overline{V})))$, and therefore compact. 
It follows that, $E^0_{I,\fin} = E^0_{I,\psi-\fin}$ and so $E^0_{I,\sing} = E^0_{I,\psi-\sing}$ as desired.
\end{proof}
\begin{rmk}\label{rmk:psi_regular_is_good}
Let $E$ be a regular topological graph (so $E^0_{\sing} = \varnothing$) and $I = (\alpha,E_I^0,\psi)$ an in-split of $E$.
The condition that $\alpha$ restricts to a homeomorphism on singular sets implies that $E^0_{I,\reg} = E^0_{I}$ so $E_I$ is also regular.
In particular, $\psi$ is  both proper and surjective in this case. 
\end{rmk}

\begin{defn}
We call $E_I = (E_I^0,E_I^1, r_I, s_I)$ the \emph{in-split graph of $E$ via $I$}.
\end{defn}

Williams'~\cite{Williams1973} original motivation for introducing state splittings---such as in-splits---was that even if the in-split graph is different, 
the dynamical systems they determine (the edge shifts) are topologically conjugate. 
We proceed to prove that this is also the case for our in-splits for topological graphs.
It is interesting to note that our approach provides a new proof of this fact even in the classical case of discrete graphs.
To do this we need some lemmas.

\begin{lemma}\label{lem:extending_alpha_to_edges}
Let $I = (\alpha,E^0_I,\psi)$ be an in-split of a topological graph $E = (E^0,E^1,r,s)$.
The projection onto the first factor $\alpha_1 \colon E^1_I = E^1 \times_{s,\alpha} E^0_I \to E^1$ is continuous, proper, and surjective. 
Moreover, the following diagram commutes:
\[
\begin{tikzcd}[ampersand replacement=\&]
{E^0_I} \& {E^1_I} \\
{E^0} \& {E^1}
\arrow["\alpha"', from=1-1, to=2-1]
\arrow["{\alpha_1}", from=1-2, to=2-2]
\arrow["{r_I}"', from=1-2, to=1-1]
\arrow["r", from=2-2, to=2-1]
\arrow["\psi"', from=2-2, to=1-1]
\end{tikzcd} .
\]
\end{lemma}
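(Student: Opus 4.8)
The plan is to verify each of the four assertions about $\alpha_1 \colon E^1_I \to E^1$ in turn --- continuity, surjectivity, properness, and commutativity of the square --- since none of them is deep; the point is just to be careful about the fibred-product topology. Throughout I will write $E^1_I = E^1 \times_{s,\alpha} E^0_I = \{(e,v) : s(e) = \alpha(v)\}$ with the subspace topology inherited from $E^1 \times E^0_I$, and $\alpha_1(e,v) = e$.

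\textbf{Continuity and surjectivity.} Continuity of $\alpha_1$ is immediate: it is the restriction to $E^1_I$ of the coordinate projection $E^1 \times E^0_I \to E^1$, which is continuous, and restrictions of continuous maps are continuous. For surjectivity, fix $e \in E^1$. Then $s(e) \in E^0$, and since $\alpha \colon E^0_I \to E^0$ is surjective (part of the definition of an in-split) there is $v \in E^0_I$ with $\alpha(v) = s(e)$; hence $(e,v) \in E^1_I$ and $\alpha_1(e,v) = e$. Commutativity of the square is essentially a bookkeeping check: going right-then-down sends $(e,v) \mapsto e \mapsto r(e)$, while going down-then-right (i.e.\ along $r_I$) sends $(e,v) \mapsto r_I(e,v) = \psi(e)$; these agree precisely because $\alpha \circ \psi = r$ is part of \cref{defn:topological_graph_insplit}, and one also reads off $\alpha \circ r_I = r \circ \alpha_1$ from the same identity together with $r_I = \psi \circ p_1$ and $\alpha_1 = p_1$ on $E^1_I$.

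\textbf{Properness.} This is the only assertion requiring a genuine (though still short) argument, and it is where the hypothesis that $\alpha$ is proper gets used. Let $K \subseteq E^1$ be compact; I must show $\alpha_1^{-1}(K)$ is compact. Now
\[
\alpha_1^{-1}(K) = \{(e,v) \in E^1_I : e \in K\} = \big(K \times \alpha^{-1}(s(K))\big) \cap E^1_I.
\]
Here $s(K)$ is compact as the continuous image of a compact set, $\alpha^{-1}(s(K))$ is compact since $\alpha$ is proper, and therefore $K \times \alpha^{-1}(s(K))$ is compact. Because $E^0$ is Hausdorff, the fibred product $E^1_I$ is closed in $E^1 \times E^0_I$ (it is the preimage of the diagonal of $E^0 \times E^0$ under the continuous map $(e,v) \mapsto (s(e), \alpha(v))$), so $\alpha_1^{-1}(K)$ is a closed subset of a compact set, hence compact.

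\textbf{Main obstacle.} There really is no serious obstacle here; the only place one can slip is in the properness computation, specifically in getting the set-theoretic identity for $\alpha_1^{-1}(K)$ right and in remembering to invoke Hausdorffness of $E^0$ to know $E^1_I$ is closed. I would state that closedness fact explicitly (it is used implicitly already in the preceding lemma) so the argument is self-contained. Everything else is formal.
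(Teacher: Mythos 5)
Your proof is correct and follows essentially the same route as the paper: continuity and surjectivity are immediate from the projection and surjectivity of $\alpha$, commutativity comes from $r_I = \psi\circ\alpha_1$ together with $\alpha\circ\psi = r$, and properness is established via the identity $\alpha_1^{-1}(K) = (K\times\alpha^{-1}(s(K)))\cap E^1_I$, using properness of $\alpha$, continuity of $s$, and closedness of $E^1_I$ in the product (the latter a point the paper uses implicitly here, having noted it in the preceding lemma, and which you rightly make explicit).
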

\begin{proof}
It is clear that $\alpha_1$ is continuous, and surjectivity follows from surjectivity of $\alpha$. 
If $K$ is a compact subset of $E^1$, then 
\[
\alpha_1^{-1}(K) = K \times_{s,\alpha} \alpha^{-1}(s(K)) = (K \times \alpha^{-1}(s(K))) \cap E^1_I.
\]
Since $\alpha$ is proper and $s$ continuous, $\alpha_1^{-1}(K)$ is a closed subset of the compact set $K \times \alpha^{-1}(s(K))$, so $\alpha_1$ is proper. 
Commutativity of the diagram follows from the definition of $r_I$. 
\end{proof}

We recall that the \emph{$n$-th power} of a topological graph $E$ is the topological graph $E^{(n)} \coloneqq (E^0,E^n,r,s)$ where $r(e_1\cdots e_n) \coloneqq r(e_1)$ and $s(e_1\cdots e_n) \coloneqq s(e_n)$.
We record how taking powers of topological graphs interacts with in-splits.

\begin{lemma}\label{lem:insplit_paths}
Let $E = (E^0,E^1,r,s)$ be a topological graph and $I =  (\alpha, E_I^0,\psi)$ an in-split of $E$.  Then $E_I^n \simeq E^n \times_{s,\alpha} E_0^I$ for all $n \ge 1$, where $s \colon E^n \to E^0$ is given by $s(e_1\cdots e_n) = s(e_n)$. 
Moreover, if $\psi^{(n)} \colon E^n \to E^0_I$ is the map defined by $\psi^{(n)}(e_1\cdots e_n) = \psi(e_1)$, 
then the $n$-th power graph $E_I^{(n)}$ be obtained from $E^{(n)}$ via the in-split $I^{(n)} = (\alpha, E^0_I, \psi^{(n)})$. 
\end{lemma}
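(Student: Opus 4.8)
The plan is to prove the homeomorphism first and deduce the statement about power graphs from it. For the homeomorphism $E_I^n \simeq E^n \times_{s,\alpha} E_I^0$ I would exhibit an explicit map. A length-$n$ path in $E_I$ is a string $f_1\cdots f_n$ with $f_i = (e_i, v_i) \in E_I^1$, so $s(e_i) = \alpha(v_i)$, subject to the composability conditions $s_I(f_i) = r_I(f_{i+1})$, i.e.\ $v_i = \psi(e_{i+1})$ for $1 \le i \le n-1$. Combining $v_i = \psi(e_{i+1})$ with $s(e_i) = \alpha(v_i)$ and $\alpha \circ \psi = r$ forces $s(e_i) = r(e_{i+1})$, so $e_1\cdots e_n \in E^n$; and $s(e_n) = \alpha(v_n)$ says $(e_1\cdots e_n, v_n) \in E^n \times_{s,\alpha} E_I^0$. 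I would take $f_1\cdots f_n \mapsto (e_1\cdots e_n, v_n)$ as the candidate homeomorphism; it is continuous, being assembled from the coordinatewise version of the projection $\alpha_1$ of \cref{lem:extending_alpha_to_edges} together with $f_1\cdots f_n \mapsto s_I(f_n) = v_n$. Its inverse sends $(e_1\cdots e_n, v)$ to the path with $f_n = (e_n, v)$ and $f_i = (e_i, \psi(e_{i+1}))$ for $i < n$, which is a legitimate path in $E_I$ precisely because $e_1\cdots e_n \in E^n$ and $\alpha\circ\psi = r$, and which is continuous since $\psi$ is. The two maps are visibly mutually inverse. (One could instead induct on $n$ via $E_I^n = E_I^{n-1}\times_{s_I,r_I}E_I^1$, but the explicit description is what makes the rest transparent.)

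For the statement about powers I would check that $I^{(n)} = (\alpha, E_I^0, \psi^{(n)})$ is an in-split of $E^{(n)} = (E^0, E^n, r, s)$ in the sense of \cref{defn:topological_graph_insplit}. All but one axiom is immediate: $E_I^0$ is unchanged; $\psi^{(n)}$ is $\psi$ precomposed with the continuous first-edge projection $E^n \to E^1$, hence continuous; $\alpha$ is the same proper continuous surjection; and $\alpha \circ \psi^{(n)}(e_1\cdots e_n) = \alpha(\psi(e_1)) = r(e_1)$ is the range map of $E^{(n)}$. The substantive point is that $\alpha$ must restrict to a homeomorphism from $E^0_{I,\psi^{(n)}-\sing}$ onto the singular set of $E^{(n)}$. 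For this I would first note, by the same bookkeeping as in the proof that $E_I$ is a topological graph (now with the proper surjection $E^n \times_{s,\alpha} E_I^0 \to E^n$ in place of $\alpha_1$), that $E^0_{I,\psi^{(n)}-\sing}$ equals the singular set of the power graph $E_I^{(n)}$; and then invoke regularity: if $E$ is regular, then $E_I$ is regular by \cref{rmk:psi_regular_is_good}, and a power of a regular topological graph is again regular (unwinding \cref{defn:sets_of_vertices} for the range map $e_1\cdots e_n\mapsto r(e_1)$, using that $s$ is a local homeomorphism and that regularity of $E$ makes $r$-preimages of compacta compact). Hence the singular sets of $E^{(n)}$ and $E_I^{(n)}$ are both empty, the homeomorphism axiom is vacuous, and $I^{(n)}$ is an in-split of $E^{(n)}$.

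It then remains to identify the in-split graph of $E^{(n)}$ along $I^{(n)}$ with $E_I^{(n)}$: its edge space is $E^n \times_{s,\alpha} E_I^0 \simeq E_I^n$ by the first part, and under this identification its range map $(e_1\cdots e_n, v) \mapsto \psi^{(n)}(e_1\cdots e_n) = \psi(e_1)$ becomes $f_1\cdots f_n \mapsto r_I(f_1)$ and its source map $(e_1\cdots e_n, v) \mapsto v$ becomes $f_1\cdots f_n \mapsto s_I(f_n)$, which are precisely the range and source of the power graph $E_I^{(n)}$, while the vertex space is $E_I^0$ on both sides. The step I expect to be the real obstacle is the singular-set axiom in the middle paragraph: this is genuinely where regularity enters (for a non-regular $E$ a vertex can be $r$-regular yet singular for $E^{(n)}$ and carry several $\alpha$-preimages, so $\alpha$ need not be injective on $E^0_{I,\psi^{(n)}-\sing}$), and the clean route is to reduce it to the inheritance of regularity by $E_I$ and by power graphs, after which the axiom holds automatically.
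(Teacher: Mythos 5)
Your first half coincides with the paper's proof: the paper writes out the same explicit description of $E_I^n$ and the same pair of mutually inverse maps $(e_1,v_1,\ldots,e_n,v_n)\mapsto(e_1\cdots e_n,v_n)$ and $(e_1\cdots e_n,v)\mapsto(e_1,\psi(e_2),e_2,\ldots,\psi(e_n),e_n,v)$, and then disposes of the second assertion with ``the final statement follows immediately''. Where you diverge is your middle paragraph. The paper's intended reading of ``$E_I^{(n)}$ is obtained from $E^{(n)}$ via $I^{(n)}$'' is the constructive one: feeding the data $(\alpha,E_I^0,\psi^{(n)})$ into the in-split graph construction yields a graph with vertex space $E_I^0$, edge space $E^n\times_{s,\alpha}E_I^0\simeq E_I^n$, range $(e_1\cdots e_n,v)\mapsto\psi(e_1)$ and source $(e_1\cdots e_n,v)\mapsto v$, i.e.\ exactly the power graph $E_I^{(n)}$ --- which is precisely your final paragraph, and it needs no regularity. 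Your middle paragraph instead checks that $I^{(n)}$ literally satisfies the singular-set axiom of \cref{defn:topological_graph_insplit}, and for that you import regularity of $E$, which is not among the lemma's hypotheses. You are right that this is not a cosmetic point: for non-regular $E$ the axiom can genuinely fail. For instance, take the discrete graph with two parallel edges from $u$ to $w$ and the in-split separating the two edges; then $E^2=\varnothing$, so every vertex of $E^{(2)}$ is singular and every vertex of $E_I^0$ is $\psi^{(2)}$-singular, yet $\alpha$ is two-to-one over $w$, so $(\alpha,E_I^0,\psi^{(2)})$ is not an in-split of $E^{(2)}$ in the strict sense. So under your stricter reading the second statement requires regularity (which does hold wherever the paper actually uses it, e.g.\ in the composition proposition), whereas under the paper's weaker reading your last paragraph alone is already a complete proof and the regularity argument is superfluous. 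Your auxiliary claims in the regular case --- regularity of $E_I$ via \cref{rmk:psi_regular_is_good} and regularity of powers of regular topological graphs --- are correct.
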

\begin{proof}
First, observe that
\[
E^n_I = \{ (e_1,v_1,\ldots,e_n,v_n) \mid e_i \in E^1,\, v_i \in E^0_I,\,  s(e_i) = \alpha(v_i), \text{ and } v_i = \psi(e_{i+1}) \text{ for all } i \ge 1\}.
\]
Since $\alpha \circ \psi = r$ it follows that the map  $(e_1,v_1,\ldots,e_n,v_n) \mapsto (e_1\cdots e_n, v_n)$ from $E_I^n$ to $E^n \times_{s,\alpha} E^0_I$ is a homeomorphism with inverse $(e_1\cdots e_n,v_n) \mapsto (e_1, \psi(e_2), e_2, \ldots, \psi(e_n), e_n, v_n)$.
The final statement follows immediately. 
\end{proof}

We now show that in-splits of regular topological graphs induce  topological conjugacies. Recall that for a regular topological graph $E$ the \emph{infinite path space} is given by 
\[
E^{\infty} = \Big\{e_1e_2\ldots \in \prod_{i=1}^{\infty} E^1 \mid s(e_i) = r(e_{i+1})\Big\}
\]
with a cylinder set topology making it a locally compact Hausdorff space. The \emph{shift map} $\sigma_E \colon E^{\infty} \to E^{\infty}$ is the local homeomorphism defined by $\sigma_E(e_1e_2\ldots) = e_2e_3 \ldots$. 

\begin{thm}\label{thm:top_insplits_conjugate}
Let $E = (E^0,E^1,r,s)$ be a regular topological graph and let $I = (\alpha, E^0_I,\psi)$ be an in-split of $E$.
Then the dynamical systems on the infinite path spaces $(\sigma_E, E^\infty)$ and $(\sigma_{E_I},E_I^{\infty})$ are topologically conjugate. 
\end{thm}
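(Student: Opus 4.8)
The plan is to construct an explicit homeomorphism $h\colon E^\infty \to E_I^\infty$ intertwining the shift maps, using the factorisation $r = \alpha\circ\psi$ as the organising principle. Given an infinite path $e_1e_2e_3\cdots \in E^\infty$, the natural candidate is
\[
h(e_1e_2e_3\cdots) = (e_1,\psi(e_2))(e_2,\psi(e_3))(e_3,\psi(e_4))\cdots,
\]
where each $(e_k,\psi(e_{k+1}))$ is an edge in $E_I^1$. First I would check this lands in $E_I^\infty$: for composability of the $k$-th and $(k+1)$-th edge we need $s_I(e_k,\psi(e_{k+1})) = r_I(e_{k+1},\psi(e_{k+2}))$, i.e. $\psi(e_{k+1}) = \psi(e_{k+1})$, which holds, and we need $(e_k,\psi(e_{k+1}))$ to be a legitimate element of $E^1\times_{s,\alpha}E_I^0$, i.e. $s(e_k) = \alpha(\psi(e_{k+1})) = r(e_{k+1})$, which is exactly composability of $e_ke_{k+1}$ in $E$. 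So $h$ is well-defined, and by construction $h\circ\sigma_E = \sigma_{E_I}\circ h$ since deleting the first edge commutes with the relabelling.

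Next I would construct the inverse. Given $(f_1,v_1)(f_2,v_2)\cdots \in E_I^\infty$, composability forces $v_k = \psi(f_{k+1})$ for all $k$, so the data is redundant: the inverse map is $(f_1,v_1)(f_2,v_2)\cdots \mapsto f_1f_2f_3\cdots$, and one checks $s(f_k) = \alpha(v_k) = \alpha(\psi(f_{k+1})) = r(f_{k+1})$ so this is a genuine element of $E^\infty$. It is clear this is a two-sided inverse to $h$. The remaining work is topological: $h$ and $h^{-1}$ are continuous for the respective cylinder-set topologies. Continuity of $h^{-1}$ is immediate since it is (a restriction of) an infinite product of the continuous projections $(f,v)\mapsto f$. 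For continuity of $h$, a basic open set in $E_I^\infty$ is determined by open conditions on finitely many coordinates $(e_k,\psi(e_{k+1}))$; pulling back through $h$, each such condition is an open condition on $e_k$ together with the condition $\psi(e_{k+1})\in V$ for open $V\subseteq E_I^0$, and since $\psi$ is continuous this is an open condition on the pair $(e_k,e_{k+1})$, hence $h$ is continuous.

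I should also record why $(\sigma_{E_I},E_I^\infty)$ makes sense, i.e. that the infinite path space is the right object: by \Cref{rmk:psi_regular_is_good}, since $E$ is regular, $E_I$ is regular, so $E_I^\infty$ carries the cylinder topology making it locally compact Hausdorff and $\sigma_{E_I}$ is a local homeomorphism, exactly as for $E$. Alternatively one can phrase the whole argument via \Cref{lem:insplit_paths}: the identification $E_I^n \simeq E^n\times_{s,\alpha}E_I^0$ at each finite level assembles, after noting the $E_I^0$-coordinate is determined by $\psi$, into the identification $E_I^\infty \simeq E^\infty$, and one checks these finite-level homeomorphisms are compatible with the connecting maps defining the cylinder topologies and intertwine the shifts.

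The only genuine subtlety — hence the \textbf{main obstacle} — is making the topological bookkeeping precise: one must confirm that the homeomorphisms of \Cref{lem:insplit_paths} at levels $n$ and $n+1$ commute with the restriction/truncation maps $E^{n+1}\to E^n$ (and their $E_I$ analogues) that define the projective/cylinder structure of the infinite path space, so that they glue to a homeomorphism at the level of $E^\infty$ rather than merely a bijection. Once that compatibility is spelled out, the intertwining with $\sigma_E,\sigma_{E_I}$ and the verification that $h$ is a homeomorphism are both routine. A further minor point worth a sentence is that regularity is genuinely used: it guarantees every edge in $E_I$ (equivalently every vertex, via $r_I$ being surjective onto $E_I^0$ by \Cref{rmk:psi_regular_is_good}) extends to an infinite path, so that $h$ is defined on all of $E^\infty$ and surjects onto all of $E_I^\infty$ with no "dangling" boundary points to worry about; in the non-regular case one would have to pass to the boundary path space and the argument would require more care.
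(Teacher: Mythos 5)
Your proposal is correct, and the conjugacy you write down is in fact the same map as the paper's: unwinding the identification of \cref{lem:insplit_paths}, the paper's homeomorphism $\psi^{\infty}$ sends $e_1e_2e_3\cdots$ to $(e_1,\psi(e_2))(e_2,\psi(e_3))\cdots$, exactly your $h$. The difference is purely in how the map is assembled. The paper never writes the formula on infinite paths; instead it builds the ladder of maps $\psi^n(e_1\cdots e_{n+1}) = (e_1\cdots e_n,\psi(e_{n+1}))$ and $\alpha^n = $ projection, checks the two commuting diagrams at finite levels, and invokes the universal property of the projective limits $E^\infty \simeq \varprojlim(E^n,r^n)$ and $E_I^\infty \simeq \varprojlim(E_I^n,r_I^n)$ to get $\alpha^\infty$, $\psi^\infty$ and the intertwining with the shifts; this is exactly the ``topological bookkeeping'' you flag as the main obstacle, and the paper resolves it once and for all at the level of the towers (these same finite-level maps are then reused in the later proposition on composing in-splits). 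Your route instead verifies well-definedness, the explicit inverse, the shift-intertwining, and continuity in both directions by hand against the cylinder-set (subspace product) topology; this is more elementary and equally complete, at the cost of not producing the finite-level machinery the paper wants later. Your closing remark about regularity is only a side comment (surjectivity is already settled by your explicit inverse), and it matches the paper's own remark that the non-regular case would require boundary path spaces.
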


\begin{proof} 
Use \cref{lem:insplit_paths} to identify $E_I^n$ with $E^n \times_{s,\alpha} E^0_I$.  For each $n \ge 1$ let $r^n \colon E^{n+1} \to E^n$ be the map given by $r^n(e_1\cdots e_{n+1}) = e_1\cdots e_n$. Then $r_I^n \colon E^{n+1}_I \to E^n$ satisfies $r_I^n (e_1 \cdots e_{n+1}, v_{n}) = (e_1\cdots e_n,\psi(e_{n+1}))$. Define $\psi^n \colon E^{n+1} \to E_I^{n}$ by
$\psi^n(e_1\ldots e_{n+1}) = (e_1\cdots e_n, \psi(e_{n+1}))$,
and let $\alpha^n \colon E^n_I \to E^n$ be the projection onto the first factor. 
It is then routine to verify that the diagram	
\begin{equation}\label{eq:insplit_proj_lim}
\begin{tikzcd}[ampersand replacement=\&,column sep = 40pt]
{E^0_I} \& {E^1_I} \& {E^2_I} \& \cdots \& {E^{\infty}_I} \\
{E^0} \& {E^1} \& {E^2} \& \cdots \& {E^{\infty}}
\arrow["{r_I}"', from=1-2, to=1-1]
\arrow["{r}", from=2-2, to=2-1]
\arrow["{r^1}", from=2-3, to=2-2]
\arrow["{r^2_I}"', from=1-4, to=1-3]
\arrow["{r^2}", from=2-4, to=2-3]
\arrow["{\alpha}"', from=1-1, to=2-1]
\arrow["{\psi}"', from=2-2, to=1-1]
\arrow["{\alpha^1}"', from=1-2, to=2-2]
\arrow["{\psi^1}"', from=2-3, to=1-2]
\arrow["{r^1_I}"', from=1-3, to=1-2]
\arrow["{\alpha^2}"', from=1-3, to=2-3]
\arrow["{\alpha^{\infty}}"', shift right=1, from=1-5, to=2-5]
\arrow["{\psi^{\infty}}"', shift right=1, from=2-5, to=1-5]
\arrow[shift left=1, from=2-5, to=2-4]
\arrow[shift left=1, from=1-5, to=1-4]
\arrow["{\psi^2}"', from=2-4, to=1-3]
\end{tikzcd}
\end{equation}
commutes, 
where $\alpha^{\infty}$ and $\psi^{\infty}$ are induced by the universal properties of the projective limit spaces $E^\infty \simeq \varprojlim (E^n,r^n)$ and $E_I^{\infty} \simeq \varprojlim (E^n_I, r_I^n)$. 
In particular, $E_I^{\infty}$ and $E^{\infty}$ are homeomorphic via $\alpha^{\infty}$ and $\psi^{\infty}$. 

For conjugacy, we make the key observation that if $s^n \colon E^{n+1} \to E^n$ is given by $s^n(e_1\cdots e_{n+1}) = e_2\cdots e_{n+1}$, 
then the shift $\sigma_E \colon E^{\infty} \to E^{\infty}$ is the unique map making the diagram
\[\begin{tikzcd}[ampersand replacement=\&,column sep = 40pt]
{E^1} \& {E^2} \& {E^3} \& \cdots \& {E^{\infty}} \\
{E^0} \& {E^1} \& {E^2} \& \cdots \& {E^{\infty}}
\arrow["{r^1}"', from=1-2, to=1-1]
\arrow["{r}", from=2-2, to=2-1]
\arrow["{r^1}", from=2-3, to=2-2]
\arrow["{r^3}"', from=1-4, to=1-3]
\arrow["{r^2}", from=2-4, to=2-3]
\arrow["{s}"', from=1-1, to=2-1]
\arrow["{s^1}"', from=1-2, to=2-2]
\arrow["{r^2}"', from=1-3, to=1-2]
\arrow["{s^2}"', from=1-3, to=2-3]
\arrow["\sigma_E"', shift right=1, from=1-5, to=2-5]
\arrow[shift left=1, from=2-5, to=2-4]
\arrow[shift left=1, from=1-5, to=1-4]
\end{tikzcd}\]
commute. 
With a similar commuting diagram for the shift $\sigma_{E_I} \colon E^{\infty}_I \to E^{\infty}_I$, it follows from \labelcref{eq:insplit_proj_lim} that $\alpha^{\infty} \circ \sigma_{E_I} = \sigma_E \circ \alpha^{\infty}$. 
\end{proof}

\begin{rmk}
The condition that the topological graphs be regular should not be essential.
A similar argument---though more technically demanding---should work for general topological graphs by replacing the path space $E^{\infty}$ with the boundary path space and using the direct limit structure of the boundary path space outlined in either \cite[\S 3.3.1]{MundeyPhD} or \cite{Katsura2021}. 
\end{rmk}

\begin{rmk}
We have seen that any in-split induces a conjugacy of the limit dynamical systems.
In the case of shifts of finite type, this was first proved by Williams~\cite{Williams1973} where he also showed that the converse holds:
any conjugacy is a composition of particular conjugacies that are induced from in-splits and their inverses.
We do not know whether a similar result could hold in the case of topological graphs.
\end{rmk}

\begin{examples}
Let $E$ be a regular topological graph.
\begin{enumerate}
\item We refer to $I = (\Id_{E^0}, E^0, r)$ as the \emph{identity in-split} since $E_I$ is graph isomorphic to $E$.
\item We refer to $I = (r,E^1,\Id_{E^1})$ as the \emph{complete in-split} of $E$. The topological graph associated to $I$ 
has vertices $E_I^0 = E^1$ and edges 
\[
E_I^1 = E^1 \times_{s,r} E^1  = \{ (e',e) \in E^1\times E^1 : s(e') = r(e) \}
\]
that may be identified with $E^2$, the composable paths of length $2$.
The range and source maps satisfy $r_I(e',e) = \Id_{E^1}(e') = e'$ and $s_I(e',e) = e$, for all $(e',e)\in E_I^1$.
We denote this in-split graph by $\hat{E} = (E^1, E^2, \hat{r}, \hat{s})$ and refer to it as the \emph{dual graph} of $E$. 

When $E$ is a regular topological graph, then $E_I$  is graph isomorphic to Katsura's dual graph, cf.~\cite[Definition 2.3]{Katsura2021} (see also \cite[Remark 2.3]{Brenken}),
and when $E$ is discrete, then $E_I$ is discrete and it is graph isomorphic to the dual graph as in~\cite[Corollary 2.6]{Raeburn}.
Iterating the dual graph construction in the case of topological graphs coincides with Katsura's iterative process in~\cite[Section 3]{Katsura2021}.
\end{enumerate}
\end{examples}

The following lemma is akin to \cite[Lemma 2.4]{Boyle-Franks-Kitchens} (see also \cite{Williams1973}) in the setting of nonnegative integer matrices. 
This lemma shows that the dual graph is in some sense the ``largest'' in-split of a regular topological graph.

\begin{lemma}\label{lem:top_graph_diamond_lemma}
Let $E$ be a regular topological graph and let $I = (\alpha, E_I^0, \psi)$ be an in-split of $E$.
Let $\alpha_1 \colon E^1_I \to E^1$ be the projection onto the first factor as in~\cref{lem:extending_alpha_to_edges}.
Then $I' = (\psi, E^1, \alpha_1)$ is an in-split of $E_I$ with the property that $(E_I)_{I'}$ is graph isomorphic to the dual graph $\hat{E}$.
\end{lemma}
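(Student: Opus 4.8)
The plan is to check directly that the triple $I' = (\psi, E^1, \alpha_1)$ satisfies the three conditions of \cref{defn:topological_graph_insplit} for an in-split of $E_I$, and then to write down an explicit graph isomorphism from $(E_I)_{I'}$ to $\hat{E}$. For the in-split axioms: condition~(i) holds since $E^1$ is a second countable locally compact Hausdorff space by hypothesis; condition~(ii) holds since $\alpha_1 \colon E^1_I \to E^1$ is continuous by \cref{lem:extending_alpha_to_edges}; and the compatibility $\psi \circ \alpha_1 = r_I$ is immediate from $r_I(e,v) = \psi(e) = \psi(\alpha_1(e,v))$ (it is also one face of the commuting square in \cref{lem:extending_alpha_to_edges}). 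For condition~(iii) I must show that $\psi \colon E^1 \to E^0_I$ is a continuous proper surjection restricting to a homeomorphism between $(E^1)_{\alpha_1 - \sing}$ and $E^0_{I, r_I - \sing}$. Continuity of $\psi$ is part of the data of $I$, and properness and surjectivity of $\psi$ follow from \cref{rmk:psi_regular_is_good} since $E$ is regular. It then suffices to observe that both singular sets in question are empty: $E^0_{I, r_I - \sing} = \varnothing$ because $E_I$ is regular (again \cref{rmk:psi_regular_is_good}), while $(E^1)_{\alpha_1 - \sing} = \varnothing$ because $\alpha_1$ is proper and surjective (\cref{lem:extending_alpha_to_edges}) and $E^1$ is locally compact --- surjectivity of $\alpha_1$ kills the $\alpha_1$-source set, and properness together with local compactness shows every point of $E^1$ lies in $(E^1)_{\alpha_1 - \fin}$, exactly as in the argument of \cref{rmk:psi_regular_is_good}. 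The empty map is vacuously a homeomorphism, so $I'$ is an in-split of $E_I$.

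For the identification, I would unwind the construction of the in-split graph applied to $E_I$ and $I'$. Its vertex space is $E^1$, and its edge space is the fibred product $E^1_I \times_{s_I,\psi} E^1 = \{((e,v),f) \in E^1_I \times E^1 : s_I(e,v) = \psi(f)\}$; since $s_I(e,v) = v$ and $(e,v) \in E^1_I$ forces $s(e) = \alpha(v)$, the constraint becomes $v = \psi(f)$ together with $s(e) = \alpha(\psi(f)) = r(f)$, so $((e,\psi(f)),f) \mapsto (e,f)$ is a bijection onto $E^2 = \{(e,f) \in E^1 \times E^1 : s(e) = r(f)\}$. This bijection is a homeomorphism, being the restriction of a coordinate projection with continuous inverse $(e,f) \mapsto ((e,\psi(f)),f)$ (here continuity of $\psi$ is used). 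Under this identification the range and source maps of $(E_I)_{I'}$ send $((e,\psi(f)),f)$ to $\alpha_1(e,\psi(f)) = e$ and to $f$, respectively, matching $\hat{r}(e,f) = e$ and $\hat{s}(e,f) = f$. Hence the pair consisting of $\Id_{E^1}$ on vertices and $(e,f) \leftrightarrow ((e,\psi(f)),f)$ on edges is a graph isomorphism $(E_I)_{I'} \to \hat{E}$.

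I expect the only real obstacle to be notational bookkeeping: in each application of the in-split construction one must keep straight which map plays the role of the proper surjection onto vertices and which plays the role of the edge map, and compute the corresponding fibred products and range/source formulas accordingly. The one point requiring a genuine (though short) argument is that $(E^1)_{\alpha_1 - \sing} = \varnothing$, and that argument is already the one behind \cref{rmk:psi_regular_is_good}; everything else is definition-chasing.
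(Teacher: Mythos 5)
Your proposal is correct and follows essentially the same route as the paper: verify the in-split axioms for $I'=(\psi,E^1,\alpha_1)$ using \cref{rmk:psi_regular_is_good} and \cref{lem:extending_alpha_to_edges}, then identify the edge space $E^1_I \times_{s_I,\psi} E^1$ with $E^2$ via $((e,\psi(f)),f)\mapsto (e,f)$ and check it intertwines range and source maps. The only difference is cosmetic: you spell out explicitly that both relevant singular sets are empty (so the homeomorphism condition is vacuous), a point the paper's proof leaves implicit behind the properness and surjectivity of $\psi$ and $\alpha_1$.
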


\begin{proof}
Since $E$ is regular, it follows from~\cref{rmk:psi_regular_is_good} that $\psi$ is proper and surjective, 
and \cref{lem:extending_alpha_to_edges} implies that $\alpha_1$ is proper and surjective. 
Therefore, $I' = (\psi, E^1, \alpha_1)$ is an in-split of $E_I$.
Let $F = (E_I)_{I'}$ be the resulting in-split graph and observe that $F^0 = E^1$.
Moreover, we have 
\[
F^1 = E_I^1 \times_{s_I, \psi} E^1 = \{ (e',x,e) \in E^1 \times E^0_I \times E^1 \mid s(e') = \alpha(x) \text{ and } x = \psi(e)\}
\]
with $r_F(e',x,e) = \alpha_1(e',x) = e'$ and $s_F(e',x,e) = e$ for all $(e',x,e)\in F^1$.

The map $F^1 \to \hat{E}^1$ sending $(e',x,e) \mapsto (e',e)$ is a homeomorphism which intertwines the range and source maps.
It is injective because $x\in E_I^0$ is uniquely determined by $e'$, 
and it is surjective since if $(e',e)\in \hat{E}^1$ are composable edges, 
then $x = \psi(e')$ satisfies $\alpha(x) = \alpha\circ \psi(e') = r(e') = s(e)$, so $(e', x, e)$ is mapped to $(e',e)$.
\end{proof}

A simple class of examples comes from ``topologically fattening'' the class of directed graphs.
\begin{example}
Let $E = (E^0,E^1,r,s)$ be a regular directed graph and fix a locally compact Hausdorff space $X$. 
Let $F^0 \coloneqq E^0 \times X$ and $F^1 \coloneqq E^1 \times X$ with the respective product topologies and define $r_F(e,x) = (r(e),x)$ and $s_F(e,x) = (s(e),x)$. 
Then $F = (F^0,F^1,r_F,s_F)$ is a topological graph. 

If $I = (\alpha, E^0_I ,\psi)$ is an in-split of $E$, then $I_X \coloneqq (\alpha \times \Id_X, E^0_I \times X, \psi \times \Id_X)$ is an in-split of $F$. 
It is straightforward to check that the associated topological graph $F_{I_X}$ is isomorphic to $(E_I^0 \times X, E_I^1 \times X, r_I \times \Id_X, s_I \times \Id_X)$.
\end{example}

In the setting of topological graphs there are also strictly more exotic examples than those obtained via fattening directed graphs.

\begin{example}
\label{eg:n-m-circles}

Fix $m,n \in \Z \setminus \{0\}$ and let $E^0 \coloneqq \bT$ and $E^1 \coloneqq \bT$. 
Define $r,s \colon E^1 \to E^0$ by $r(z) = z^m$ and $s(z) = z^n$. Then $E = (E^0,E^1,r,s)$ is a topological graph.
Suppose $a,b \in \Z$ satisfy $m = ab$. Define $\psi \colon E^1 \to \bT$ by $\psi(z) = z^a$ and $\alpha \colon \bT \to E^0$ by $\alpha(z) = z^b$. 
Since $r(z) = z^m = (z^{a})^b = \alpha \circ \psi(z)$, it follows that $I = (\alpha, \bT, \psi)$ is an in-split of $E$. 
The new edge set is
\[
E^1_I = \{ (z_1,z_2) \in \bT^2 \mid z_1^n = z_2^b\}.
\] 
We claim that $E^1_I$ is homeomorphic to a disjoint union of $\gcd(n,b)$ copies of $\bT$. 

Let $q_b,q_n$ be the unique integers such that $n = q_n \gcd(n,b)$ and $b = q_b \gcd(n,b)$, and note that $q_n$ and $q_b$ have no common factors. We also record that $q_n b = q_n q_b \gcd(n,b) = q_b n$.
For each $|b|$-th root of unity $\omega$ define $\pi_{\omega} \colon \bT \to E^1_I$ by
\[
\pi_{\omega}(z) = (z^{q_b}, \omega z^{q_n}).
\]
Suppose that $(z_1,z_2) \in E^1_I$ and let $z$ be a $|q_b|$-th root of $z_1$. 
Then $(z^{q_n})^b = (z^{q_b})^n = z_1^n = z_2^b$, so $(z_2/z^{q_n})^b = 1$. 
Hence, there is some $|b|$-th root of unity $\omega$ such that $z_2 = \omega z^{q_n}$. 
In particular, every $(z_1,z_2) \in E^1_I$ can be written in the form $(z^{q_b},\omega z^{q_n}) = \pi_{\omega}(z)$ for some $z \in \bT$ and some $|b|$-th root of unity $\omega$.

We claim that each $\pi_\omega$ is injective. Suppose that $\pi_{\omega}(z) = \pi_{\omega}(v)$ for some $z,v \in \bT$. Then $z^{q_b} = v^{q_b}$ and $z^{q_n} = v^{q_b}$. Consequently $z = \omega_0 v$ for some $\omega_0 \in \bT$ that is simultaneously a $|q_b|$-th and a $|q_n|$-th root of unity. Since $q_b$ and $q_n$ are coprime, we must have $\omega_0 = 1$, so $\pi_\omega$ is injective. 
Since each $\pi_\omega$ is a continuous injection from a compact space to a Hausdorff space, it follows that each $\pi_\omega$ is a homeomorphism onto its image.  

Fix a primitive $|b|$-th root of unity $\lambda$. We claim that $\pi_{\lambda^c}$ and $\pi_{\lambda^d}$ have the same image if and only if $c \equiv kn +d \pmod{|b|}$ for some $0 \le k < |q_b|$. If  $c \equiv kn +d \pmod{|b|}$, then $\lambda^c = \lambda^{kn + d}$. For all $z \in \bT$ we compute
\begin{align*}
\pi_{\lambda^c} (z^{\gcd(n,b)}) 
&= ((z^{\gcd(n,b)})^{q_b}, \lambda^c (z^{\gcd(n,b)})^{q_n}) 
= (z^b, \lambda^{kn + d} z^n)\\
&= ((\lambda^{k\gcd(n,b)}z)^{q_b}, \lambda^d (\lambda^{k\gcd(n,b)} z)^{q_n})
= \pi_{\lambda^d}((\lambda^k z)^{\gcd(n,b)}).
\end{align*}
Since $z \mapsto z^{\gcd(n,b)}$ and $z \mapsto (\lambda^k z)^{\gcd(n,b)}$ both surject onto $\bT$, it follows that $\pi_{\lambda^c}$ and $\pi_{\lambda^d}$ have the same image. 

Conversely, suppose that $\pi_{\lambda^c}(z) = \pi_{\lambda^d}(v)$ for some $z,v \in \bT$. Then $z^{q_b} = v^{q_b}$ and $\lambda^c z^{q_n} = \lambda^d v^{q_n}$. Since  $z^{q_b} = v^{q_b}$, there is an $|q_b|$-th root of unity $\lambda_0$ such that $z = \lambda_0 v$. Since $b = q_b \gcd(n,b)$ there exists $0 \le k < |q_b|$ such that $\lambda_0 = \lambda^{k\gcd(n,b)}$. It follows that
\[
\lambda^c z^{q_n} = \lambda^d v^{q_n} =\lambda^d (\lambda^{k\gcd(n,b)} z)^{q_n} = \lambda^{kn + d} z^{q_n}.
\]
Therefore, $\lambda^c = \lambda^{kn + d}$ so $c \equiv kn +d \pmod{|b|}$.
It follows that $E_I^1$ is a disjoint union of circles, but what remains is to count how many distinct circles it is composed of. 

Since the maps $\pi_{\lambda^c}$ and $\pi_{\lambda^d}$ have the same image if and only if $c \equiv kn + d \pmod{|b|}$ for some $k$, the number of circles is in bijection with the cosets of $\bZ_{|b|} / n \bZ_{|b|}$. To determine the number of cosets it suffices to determine the cardinality of $n\bZ_{|b|}$. Using B\'ezout's Lemma at the last equality we observe that 
\[
n\bZ_{|b|} = \{nc \in \bZ_{|b|} \mid c \in \bZ\} = \{nc + bd \in \bZ_{|b|}\mid c,d \in \bZ_{|b|}\} = \{k \gcd(n,b) \in \bZ_{|b|} \mid k \in \bZ_{|b|}\}.
\]
It follows that $\bZ_{|b|}/n\bZ_{|b|}$ contains $\gcd(n,b)$ cosets.

For an explicit identification of $E^1_I$ with the disjoint union of $\gcd(n,b)$ circles, fix a primitive $|b|$-th root of unity $\lambda$ and let $\pi \colon \{1 ,\ldots, \gcd(n,b)\} \times \bT \to E_I^1$ be the homeomorphism defined by $\pi(k,z) = \pi_{\lambda^k}(z) = (z^{q_b},\lambda^k z^{q_n})$. Under this identification, 
\[
r_I(k,z) = \psi(z^{q_b})  = z^{q_ba} = z^{m/\gcd(n,b)} \quad \text{and} \quad s_I(k,z) = \lambda^k z^{q_n} = \lambda^k z^{n/\gcd(n,b)}.
\]
Remarkably, the quite different topological graphs $E$ and $E_I$ induce topologically conjugate dynamics on their respective path spaces by \cref{thm:top_insplits_conjugate}. This is far from obvious.

By swapping the role of $b$ and $n$ above, we could alternatively let $\gamma \in \bT$ be a primitive $|n|$-th root of unity to see that $\pi' \colon \{1,\ldots,\gcd(n,b)\} \times \bT \to E_I^1$ defined by $\pi'(k,z) = (\gamma^k z^{q_b}, z^{q_n})$ is a homeomorphism. Identifying $E^1_I$ with the disjoint union of circles via $\pi'$, the range and source maps for $E_I$ satisfy
\[
r_I(k,z) = \psi(\gamma^k z^{q_b}) = \gamma^{ka} z^{q_b a} = \gamma^{ka} z^{m/\gcd(n,b)} \quad \text{and} \quad s_I(k,z) = z^{q_n} = z^{n/\gcd(n,b)}.
\]
\end{example}

In general, the na\"ive composition of in-splits cannot be realised as a single in-split. 
If one pays the penalty of passing to paths, then the following result provides a notion of composition of in-splits, highlighting the role of the projective limit decomposition of path spaces from \eqref{eq:insplit_proj_lim}.

\begin{prop}
Suppose that $E$ is a regular topological graph and that there is a finite sequence of in-splits $I_k = (\alpha_k, E_{I_{k}}, \psi_k)$ for $k = 1,\ldots,n$ such that
\begin{enumerate}
\item $I_1$ is an in-split of $E$, and
\item $I_k$ is an in-split of $E_{I_{k-1}}$ for $k \ge 2$.
\end{enumerate}

Then $E_{I_n}^{(n)} = (E^0_{I_n},E^n_{I_n},r,s)$ is isomorphic to the graph obtained by a single in-split $(\alpha, E^0_{I_n}, \psi)$ of $E^{(n)} = (E^0,E^n,r,s)$. Moreover, $(\sigma_{E}^n,E^{\infty})$ is topologically conjugate to $(\sigma_{E_{I_n}}^n, E_{I_n}^{\infty})$. 
\end{prop}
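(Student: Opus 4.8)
I would describe the vertices and edges of $E_{I_n}^{(n)}$ by hand, recognise this data as that of a single in-split of $E^{(n)}$, and then read off the conjugacy from \cref{thm:top_insplits_conjugate}. Set $\beta_k\coloneqq\alpha_1\circ\dots\circ\alpha_k\colon E^0_{I_k}\to E^0$ (so $\beta_0=\Id$ and $E_{I_0}=E$), write $r^{(k)},s^{(k)}$ for the range and source maps of $E^{(k)}$, and let $\pi_k\colon E^1_{I_k}\to E^1$ be the iterated first-factor projection coming from $E^1_{I_j}=E^1_{I_{j-1}}\times_{s,\alpha_j}E^0_{I_j}$, $j\le k$. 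The crux is to prove, by induction on $k=0,1,\dots,n$, two statements: $S(k)$, that for every $m\ge1$ the assignment $p_1\cdots p_m\mapsto(\pi_k(p_1)\cdots\pi_k(p_m),\,s_{I_k}(p_m))$ is a homeomorphism $(E_{I_k})^m\to E^m\times_{s,\beta_k}E^0_{I_k}$; and $T(k)$, that there is a continuous, proper, surjective $\chi_k\colon E^k\to E^0_{I_k}$ with $\beta_k\circ\chi_k=r^{(k)}$ for which $r_{I_k}(p_1)=\chi_k(\pi_k(p_1)\cdots\pi_k(p_k))$ whenever $p_1\cdots p_m$ is an $m$-path in $E_{I_k}$ with $m\ge k$.

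Granting $S(n)$ and $T(n)$ with $m=n$: the edge set of $E_{I_n}^{(n)}$ is identified with $E^n\times_{s,\beta_n}E^0_{I_n}=(E^{(n)})^1\times_{s^{(n)},\beta_n}E^0_{I_n}$, so that source becomes the second-coordinate projection and (by $T(n)$) range becomes $\chi_n$ followed by the first-coordinate projection, while the vertex set is $E^0_{I_n}$. This is exactly the in-split of $E^{(n)}$ with data $(\beta_n,E^0_{I_n},\chi_n)$, and the data is admissible: $\beta_n$ is a continuous proper surjection satisfying $\beta_n\circ\chi_n=r^{(n)}$, and the singular-set condition holds vacuously, since $\chi_n$ being a proper surjection forces $E^0_{I_n}$ to have no $\chi_n$-singular points, while $E^{(n)}$ is regular because $E$ is (a topological graph is regular precisely when its range map is a proper surjection, a property that passes to $n$-th powers). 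This proves the first statement. The conjugacy statement then follows by applying \cref{thm:top_insplits_conjugate} to the regular graph $E^{(n)}$ and this in-split, and then using that for any regular topological graph $F$ concatenation of length-$n$ blocks is a homeomorphism $(F^{(n)})^\infty\to F^\infty$ intertwining $\sigma_{F^{(n)}}$ with $\sigma_F^n$ (apply this to $F=E$ and $F=E_{I_n}$).

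It remains to run the induction. The cases $k=0$ (trivial) and $k=1$ (which is \cref{lem:insplit_paths} together with the fact that $r_{I_1}(e,v)=\psi_1(e)$ does not depend on $v$, so $\chi_1=\psi_1$, proper and surjective by \cref{rmk:psi_regular_is_good}) start the induction. For the step, apply \cref{lem:insplit_paths} to the in-split $I_k$ of $E_{I_{k-1}}$: an $m$-path in $E_{I_k}$ has the form $p_i=(f_i,w_i)$ with $f_i\in E^1_{I_{k-1}}$, and composability forces $w_i=\psi_k(f_{i+1})$, hence $s_{I_{k-1}}(f_i)=\alpha_k\psi_k(f_{i+1})=r_{I_{k-1}}(f_{i+1})$; thus $f_1\cdots f_m$ is an $m$-path in $E_{I_{k-1}}$ and $s_{I_{k-1}}(f_m)=\alpha_k(w_m)$. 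Feeding $S(k-1)$ into this description and discarding the coordinate $\alpha_k(w_m)$ (now determined by $w_m$) gives $S(k)$. For $T(k)$: the range map of $E_{I_k}$ ignores the new labels, so $r_{I_k}(p_1)=\psi_k(f_1)$; by $S(k-1)$ the edge $f_1$ is determined by $(\pi_{k-1}(f_1),s_{I_{k-1}}(f_1))$, and $s_{I_{k-1}}(f_1)=r_{I_{k-1}}(f_2)=\chi_{k-1}(\pi_{k-1}(f_2)\cdots\pi_{k-1}(f_k))$ by the composability above and $T(k-1)$ applied to the $(m-1)$-path $f_2\cdots f_m$ (legitimate since $m-1\ge k-1$). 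Since $\pi_k(p_i)=\pi_{k-1}(f_i)$, this realises $r_{I_k}(p_1)$ as the value at $\pi_k(p_1)\cdots\pi_k(p_k)$ of the map $\chi_k(e_1\cdots e_k)=\psi_k\bigl(\iota_{k-1}(e_1,\chi_{k-1}(e_2\cdots e_k))\bigr)$, where $\iota_{k-1}$ inverts $S(k-1)$ at $m=1$ (and the pairing is in the domain of $\iota_{k-1}$ because $\beta_{k-1}\circ\chi_{k-1}=r^{(k-1)}$ forces $s(e_1)=r(e_2)$, which holds as $e_1\cdots e_k$ is a path). This $\chi_k$ is continuous; it is proper and surjective because $\psi_k$ and $\alpha_j$ are (\cref{rmk:psi_regular_is_good}) and because the truncation $E^k\to E^{k-1}$, $e_1\cdots e_k\mapsto e_2\cdots e_k$, is a proper surjection (using $E$ regular, equivalently that $r$ is a proper surjection); and $\beta_k\circ\chi_k=r^{(k)}$ by telescoping the identities $\alpha_j\circ\psi_j=r_{I_{j-1}}$ down to $\alpha_1\circ\psi_1=r$.

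\textbf{Main obstacle.} The delicate part is $T(k)$: one must chase how composability in the $m$-fold path space of the $k$-fold iterated in-split pins the ``new vertex'' decorations attached to the initial edge down to the first $k$ edges of the underlying $E$-path. What makes the chase terminate, instead of running all the way to the free last decoration, is precisely that the range map of an in-split graph forgets the freshly created vertices; each descent through one layer of the tower of in-splits consumes one extra edge of look-ahead, which is exactly why one must pass to $E^{(n)}$ before a single in-split suffices.
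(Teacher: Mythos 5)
Your proposal is correct and takes essentially the same route as the paper: repeated use of \cref{lem:insplit_paths} to identify $E^n_{I_n}$ with $E^n\times_{s,\alpha_1\circ\cdots\circ\alpha_n}E^0_{I_n}$, a factorisation map $\chi_n\colon E^n\to E^0_{I_n}$ that is precisely the paper's diagonal composite of the maps $\psi^p_k$ from the tower of in-splits, and the conjugacy deduced from \cref{thm:top_insplits_conjugate} together with the identification of $(F^{(n)})^\infty$ with $F^\infty$. Your double induction $S(k)$, $T(k)$ simply makes explicit the bookkeeping the paper delegates to its large commuting diagram (and in doing so you check the agreement of range maps and the vacuity of the singular-set condition a bit more carefully than the paper does).
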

\begin{proof}
For each $0 \le p,k \le n$ let $\alpha^p_k \colon E_{I_k}^p \to E_{I_{k-1}}^p$ and $\psi^p_k \colon E_{I_{k-1}}^p \to E_{I_{k}}^{p-1}$ be the maps arising from the sequences defined in the proof of \cref{thm:top_insplits_conjugate}, where for consistency we take the convention that $E_{I_0} \coloneqq E$, $\alpha_k^0 \coloneqq \alpha_k$, and $\psi_k^0\coloneqq \psi_k$. In particular the diagram	
\[\begin{tikzcd}[ampersand replacement=\&,column sep=50pt, row sep = 30pt]
{E_{I_{n}}^0} \& {E^1_{I_{n}}} \& \cdots \& {E^{n-1}_{I_{n}}} \& {E^n_{I_n}} \\
{E^0_{I_{n-1}}} \& {E^1_{I_{n-1}}} \& \cdots \& {E_{I_{n-1}}^{n-1}} \& {E^n_{I_{n-1}}} \\
\vdots \& \vdots \& \ddots \& \vdots \& \vdots \\
{E_{I_1}^0} \& {E_{I_1}^1} \& \cdots \& {E^{n-1}_{I_1}} \& {E_{I_1}^{n}} \\
{E^0} \& {E^1} \& \cdots \& {E^{n-1}} \& {E^{n}}
\arrow["{\alpha_1}"', from=4-1, to=5-1]
\arrow["{\psi_1}"{description}, from=5-2, to=4-1]
\arrow["r"', from=5-2, to=5-1]
\arrow["{\alpha_2}"', from=3-1, to=4-1]
\arrow["{\alpha_1^{n}}"', from=4-5, to=5-5]
\arrow["{\alpha_1^1}"', from=4-2, to=5-2]
\arrow["{\psi_2}"{description}, from=4-2, to=3-1]
\arrow["{\alpha_2^1}"', from=3-2, to=4-2]
\arrow["{r_{I_1}}"', from=4-2, to=4-1]
\arrow["{\alpha_2^n}"', from=3-5, to=4-5]
\arrow["{\psi_2^{n-1}}"{description}, from=4-5, to=3-4]
\arrow["{r^{n-1}}"', from=5-5, to=5-4]
\arrow["{r^{n-1}_{I_1}}"', from=4-5, to=4-4]
\arrow["{\psi_{1}^{n-1}}"{description}, from=5-5, to=4-4]
\arrow["{r^1}"', from=5-3, to=5-2]
\arrow["{r^1_{I_1}}"', from=4-3, to=4-2]
\arrow["{r_{I_1}^{n-2}}"', from=4-4, to=4-3]
\arrow["{r^{n-2}}"', from=5-4, to=5-3]
\arrow["{\alpha_1^{n-1}}"', from=4-4, to=5-4]
\arrow["{\alpha_2^{n-1}}"', from=3-4, to=4-4]
\arrow["{\psi_1^{n-2}}"{description}, from=5-4, to=4-3]
\arrow["{\psi^{n-2}_2}"{description}, from=4-4, to=3-3]
\arrow["{\psi_1^1}"{description}, from=5-3, to=4-2]
\arrow["{\alpha_n}"', from=1-1, to=2-1]
\arrow["{\alpha_{n-1}}"', from=2-1, to=3-1]
\arrow["{\psi_{n-1}}"{description}, from=3-2, to=2-1]
\arrow["{\psi_{n}}"{description}, from=2-2, to=1-1]
\arrow["{r_{I_{n-1}}}"', from=2-2, to=2-1]
\arrow["{r_{I_n}}"', from=1-2, to=1-1]
\arrow["{\psi_{n-1}^1}"{description}, from=3-3, to=2-2]
\arrow["{\alpha_{n-1}^{n-1}}"', from=2-4, to=3-4]
\arrow["{\alpha_{n-1}^n}"', from=2-5, to=3-5]
\arrow["{\alpha_n^n}"', from=1-5, to=2-5]
\arrow["{\alpha_n^{n-1}}"', from=1-4, to=2-4]
\arrow["{r_{I_n}^{n-2}}"', from=1-4, to=1-3]
\arrow["{r_{I_{n-1}}^1}"', from=2-3, to=2-2]
\arrow["{\psi_{n}^1}"{description}, from=2-3, to=1-2]
\arrow["{r_{I_n}^1}"', from=1-3, to=1-2]
\arrow["{r_{I_{n-1}}^{n-2}}"{description}, from=2-4, to=2-3]
\arrow["{\psi_n^{n-2}}"{description}, from=2-4, to=1-3]
\arrow["{r_{I_n}^{n-1}}"', from=1-5, to=1-4]
\arrow["{r_{I_{n-1}}^{n-1}}"{description}, from=2-5, to=2-4]
\arrow["{\psi_n^{n-1}}"{description}, from=2-5, to=1-4]
\arrow["{\psi_{n-1}^{n-1}}"{description}, from=3-5, to=2-4]
\arrow["{\alpha_n^1}"', from=1-2, to=2-2]
\arrow["{\alpha_{n-1}^1}"', from=2-2, to=3-2]
\end{tikzcd}\]
commutes.

Let $\alpha = \alpha_1 \circ \cdots \circ \alpha_n$ and let
$\psi = \psi_n^0 \circ \psi_{n-1}^1 \circ \cdots \circ \psi_2^{n-1} \circ \psi_1^n$.	We claim that $(\alpha, E^0_{I_n}, \psi)$ is an in-split of $E^{(n)}$. Clearly $\alpha$ is a continuous proper surjection and $\psi$ is continuous. Moreover, $\alpha \circ \psi = r \circ r^1 \circ \cdots r^{n-2} \circ r^{n-1}$ is the range map on the $n$-th power $E^{(n)}$. 

Repeatedly applying \cref{lem:insplit_paths} and using the fact that each $\alpha_i$ surjects, we see that
\begin{align*}
E_{I_n}^{n} 
\simeq E_{I_{n-1}}^n \times_{s,\alpha_n} E^0_{I_{n}} 
&\simeq (\cdots((E^n \times_{s,\alpha_1} E^0_{I_1}) \times_{s,\alpha_2} E^0_{I_2})\times_{s,\alpha_3} \cdots ) \times_{s,\alpha_n} E_{I_n}^0\\
&\simeq E^n \times_{s, \alpha_1 \circ \cdots \circ \alpha_n} E_{I_n}^0
= E^n \times_{s, \alpha} E_{I_n}^0.
\end{align*}
The source maps on $E^n \times_{s, \alpha} E_{I_n}^0$ as an in-split and $E_{I_n}^{n}$ clearly agree, and commutativity of the preceding diagram also imply that the range maps agree. 

The final statement follows after observing that $E^{\infty} \simeq \varprojlim (E^k,r^k) \simeq \varprojlim(E^{nk}, r^{nk})$ and applying \cref{thm:top_insplits_conjugate}.
\end{proof}

\subsection{Noncommutative in-splits}

Inspired by the recasting of in-splits for directed graphs and topological graphs 
we introduce the following analogous notion of in-splits for $C^*$-correspondences. 

\begin{defn}
{\label{defn:correspondence_insplit}}
An \emph{in-split} of a nondegenerate $A$--$A$-correspondence $(\phi, {}_AX_A)$ is a triple $I = (\alpha,B,\psi)$ consisting of a $C^*$-algebra $B$ together with a nondegenerate injective $*$-homomorphism  $\alpha \colon A \to B$ 
and a left action $\psi \colon B \to \End_A(X)$ such that $\psi \circ \alpha = \phi$ and, moreover, 
\begin{enumerate}
\item $\alpha(J_\phi) \subseteq J_{\psi} \coloneqq \psi^{-1}(\End^0_A(X)) \cap \ker(\psi)^{\perp}$, and
\item \label{itm:quotients_preserved} the induced $*$-homomorphism $\overline{\alpha}\colon A/J_{\phi} \to B/J_{\psi}$ is an isomorphism. 
\end{enumerate}
To an in-split $(\alpha,B,\psi)$ of $(\phi,X_A)$ we associate the $C^*$-correspondence $(\psi \ox \Id_B, X \ox_{\alpha} B)$ over $B$
where the left action is given as $(\psi\ox\Id_B)(b') (x\ox b) = \psi(b')x \ox b$ for all $x\in X_A$ and $b',b\in B$. We call this the \emph{in-split correspondence of $(\phi, {}_A X_A)$ associated to $I$}. 
\end{defn}
Observe that since $\phi$ and $\alpha$ are nondegenerate, so is $\psi$. We identify the covariance ideal for the in-split correspondence. 
\begin{lemma}
\label{lem:tee-ox1}
The ideal $J_{\psi}$ of $B$ is the covariance ideal for  $(\psi \ox \Id_B, X \ox_{\alpha} B)$. That is, $J_{\psi} = J_{\psi \ox \Id_B}$.
\end{lemma}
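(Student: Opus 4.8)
The plan is to exploit that, by construction, the left action of the in-split correspondence factors as $(\psi \otimes \Id_B)(b') = \psi(b') \otimes \Id_B$, where $T \mapsto T \otimes \Id_B$ denotes the canonical $*$-homomorphism $\End_A(X) \to \End_B(X \otimes_\alpha B)$. It therefore suffices to compute the kernel of this map and the preimage of the compacts $\End_B^0(X \otimes_\alpha B)$ under it.

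\emph{Step 1: $\ker(\psi \otimes \Id_B) = \ker(\psi)$.} The inclusion $\supseteq$ is immediate. For $\subseteq$, suppose $(\psi \otimes \Id_B)(b') = 0$. Then $\psi(b')\xi \otimes b = 0$ in $X \otimes_\alpha B$ for all $\xi \in X$ and $b \in B$, so
\[
\bigl\| b^{*}\, \alpha\bigl((\psi(b')\xi \mid \psi(b')\xi)_A\bigr)\, b \bigr\| = \|\psi(b')\xi \otimes b\|^{2} = 0 .
\]
Setting $p \coloneqq \alpha\bigl((\psi(b')\xi \mid \psi(b')\xi)_A\bigr)^{1/2} \in B$, this says $pb = 0$ for all $b \in B$, whence $p^{2}=0$ and $p = 0$; since $\alpha$ is injective this forces $(\psi(b')\xi \mid \psi(b')\xi)_A = 0$, i.e.\ $\psi(b')\xi = 0$. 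As $\xi$ was arbitrary, $\psi(b') = 0$.

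\emph{Step 2: $(\psi \otimes \Id_B)^{-1}\bigl(\End_B^{0}(X \otimes_\alpha B)\bigr) = \psi^{-1}\bigl(\End_A^{0}(X)\bigr)$.} Because $(\psi \otimes \Id_B)(b') = \psi(b') \otimes \Id_B$, this amounts to showing, for $T \in \End_A(X)$, that $T \otimes \Id_B \in \End_B^{0}(X \otimes_\alpha B)$ if and only if $T \in \End_A^{0}(X)$. For the ``if'' direction, the left action of $A$ on the correspondence $({}_A B_B)$ takes values in $\End_B^{0}(B) = B$ (the compacts on $B$ as a module over itself), so tensoring by it preserves compactness: using the Cohen factorisation $X = X\cdot A$ one checks that each $\Theta_{\xi,\eta}\otimes\Id_B$ is a norm limit of finite-rank operators on $X\otimes_\alpha B$, and the general case follows by norm-density of $\End^{0}$ (this is standard; cf.~\cite{Lance}). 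For the ``only if'' direction, regard $X$ as a $B$--$A$-correspondence via $\psi$ and apply Pimsner's~\cite[Corollary 3.7]{Pimsner}, exactly as in~\cref{lem:reducing_covariance_ideals}, to $X \otimes_\alpha B$: if $\psi(b')\otimes\Id_B \in \End_B^{0}(X\otimes_\alpha B)$, then $\psi(b') \in \End_A^{0}\bigl(X \cdot \alpha^{-1}(\End_B^{0}(B))\bigr) = \End_A^{0}(X)$, where we used $\End_B^{0}(B) = B$, $\alpha^{-1}(B) = A$ (as $\alpha \colon A \to B$), and $\overline{X\cdot A} = X$.

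Combining the two steps, and observing that the annihilator ideal $\ker(\psi \otimes \Id_B)^{\perp}$ in $B$ equals $\ker(\psi)^{\perp}$ by Step 1, we conclude
\[
J_{\psi \otimes \Id_B} = (\psi \otimes \Id_B)^{-1}\bigl(\End_B^{0}(X \otimes_\alpha B)\bigr) \cap \ker(\psi \otimes \Id_B)^{\perp} = \psi^{-1}\bigl(\End_A^{0}(X)\bigr) \cap \ker(\psi)^{\perp} = J_{\psi}.
\]
The one place needing genuine care is the ``if'' half of Step 2 — checking that $T \otimes \Id_B$ is honestly compact rather than merely adjointable — but since the left action of $A$ on $B$ lands in $\End_B^{0}(B) = B$ this is precisely the favourable case and poses no real obstacle; the remainder is bookkeeping.
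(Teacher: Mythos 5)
Your proof is correct and follows essentially the same route as the paper's: the kernel equality via injectivity of $\alpha$ and the inner-product computation, compactness of $T\ox\Id_B$ because the left action of $A$ on $B$ lands in $\End_B^0(B)=B$, and the converse via Pimsner's Corollary 3.7 with $\alpha^{-1}(\End_B^0(B))=A$. The only difference is organisational: where the paper obtains the inclusion $J_{\psi\ox\Id_B}\subseteq J_{\psi}$ by citing \cref{lem:reducing_covariance_ideals}, you redo that argument inline, which changes nothing of substance.
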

\begin{proof}
\cref{lem:reducing_covariance_ideals} implies $J_{\psi \ox \Id_B} \subseteq J_{\psi}$. 
For the other inclusion, observe that it follows from \cite[Corollary 3.7]{Pimsner} and 
\[
\alpha^{-1}(\End_B^0(B))=\alpha^{-1}(B)=A
\]
that the map $T \mapsto T \ox \Id_B$ from $\End_A(X)$ to $\End_B(X \ox_{\alpha} B)$ takes compact operators to compact operators. 
In particular, $\psi (b) \ox \Id_B$ is compact for each $b \in \psi^{-1}(\End_A^0(X))$, 
so $\psi^{-1}(\End_A^0(X)) \subset (\psi \ox \Id_B)^{-1}(\End_B^0(X \ox_{\alpha} B))$.

Clearly, $\ker(\psi) \subseteq \ker(\psi \ox \Id_B)$. 
On the other other hand, if $b_0 \in \ker(\psi \ox \Id_B)$, then
\[
0 = (\psi(b_0)x \ox b \mid \psi(b_0)x \ox b)_B = (b \mid \alpha((\psi(b_0)x, \psi(b_0)x)_A) b)_B
\]
for all $x \ox b \in X \ox_{\alpha} B$. 
In particular, $\alpha((\psi(b_0)x \mid \psi(b_0)x)_A) = 0$, so injectivity of $\alpha$ implies $\psi(b_0)x = 0$ for all $x \in X_A$. 
Hence, $\ker(\psi) = \ker(\psi \ox \Id_B)$.
We conclude that $J_{\psi \ox \Id_B} = J_{\psi}$. 	
\end{proof}

Condition~\ref{itm:quotients_preserved} allows for a useful decomposition of elements in $B$ in the following way.
\begin{lemma}\label{lem:b_decomposition}
For each $b \in B$ there exists $a \in A$ and $k \in J_{\psi}$ such that $b = \alpha(a) + k$. 
\end{lemma}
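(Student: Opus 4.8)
The plan is to read the decomposition straight off Condition~\ref{itm:quotients_preserved} of \cref{defn:correspondence_insplit}. Write $q_\phi\colon A \to A/J_\phi$ and $q_\psi\colon B \to B/J_\psi$ for the canonical quotient maps, and recall that the induced map $\overline{\alpha}\colon A/J_\phi \to B/J_\psi$ is, by construction, the unique $*$-homomorphism satisfying $\overline{\alpha}\circ q_\phi = q_\psi \circ \alpha$ (which is well defined precisely because $\alpha(J_\phi)\subseteq J_\psi$ by Condition~(i)).

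Given $b \in B$, I would consider its class $q_\psi(b) \in B/J_\psi$. Since $\overline{\alpha}$ is an isomorphism it is in particular surjective, so there exists $a \in A$ with $\overline{\alpha}(q_\phi(a)) = q_\psi(b)$. Unwinding this via the intertwining relation $\overline{\alpha}\circ q_\phi = q_\psi\circ\alpha$ gives $q_\psi(\alpha(a)) = q_\psi(b)$, i.e. $b - \alpha(a) \in \ker q_\psi = J_\psi$. Setting $k \coloneqq b - \alpha(a)$ then yields $b = \alpha(a) + k$ with $a \in A$ and $k \in J_\psi$, as required.

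There is no genuine obstacle here: only the surjectivity half of Condition~\ref{itm:quotients_preserved} is used, and injectivity of $\overline{\alpha}$ (equivalently $\alpha^{-1}(J_\psi) = J_\phi$) plays no role in this particular statement, though it will matter for sharper, uniqueness-type refinements. It is perhaps worth noting in passing that the decomposition $b = \alpha(a) + k$ is not unique in general, since $a$ is only determined modulo $\alpha^{-1}(J_\psi)$; using injectivity of $\overline{\alpha}$ one can pin $a$ down up to $J_\phi$, but the bare existence statement needs nothing beyond surjectivity of $\overline{\alpha}$.
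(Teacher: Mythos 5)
Your argument is correct and is exactly the one the paper intends: the lemma is stated as an immediate consequence of Condition~\ref{itm:quotients_preserved}, and your use of surjectivity of $\overline{\alpha}$ together with the intertwining relation $\overline{\alpha}\circ q_\phi = q_\psi\circ\alpha$ is the same (implicit) proof. Your side remark that only surjectivity is needed and that $a$ is determined only modulo $J_\phi$ is accurate but not required.
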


If $(\phi, {}_AX_A)$ is the correspondence associated to a topological graph $E$ and $I$ is an in-split of $E$,
then $I$ induces an in-split of correspondences in the sense of~\cref{defn:correspondence_insplit}. 
Moreover, the new correspondence associated to the in-split of correspondences may be identified with the graph correspondence of the in-split graph $E_I$.
It is in this sense that~\cref{defn:correspondence_insplit} generalises the topological notion of in-split of~\cref{defn:topological_graph_insplit}.

\begin{prop}\label{prop:graph_insplit_gives_correspondence_insplit}
Let $E$ be a topological graph and let $I = (\alpha,E^0_I,\psi)$ be an in-split of $E$.
Let $(\phi, X(E))$ and $(\phi_I, X(E_I))$ be the graph correspondences of $E$ and $E_I$, respectively.
Then there is an induced in-split $(\alpha^*, C_0(E_I^0), \psi^*)$ of $(\phi, X(E))$ satisfying 
\[
\alpha^*(f)= f \circ \alpha \quad \text{and} \quad \psi^*(f)x(e) = f(\psi(e))x(e)
\] 
for all $f \in A$, $x \in C_c(E^1)$, and $e \in E^1$. 

Moreover, the in-split correspondence $(\psi^* \ox \Id, X(E) \ox_{\alpha^*} C_0(E_I^0))$ is isomorphic to $(\phi_I, X(E_I))$.
\end{prop}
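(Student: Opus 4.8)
The plan is to exhibit an explicit isomorphism of correspondences over $C_0(E^0_I)$. On the dense subspaces $C_c(E^1)\subseteq X(E)$ and $C_c(E^0_I)\subseteq C_0(E^0_I)$ I would define a linear map $\Psi$ on elementary tensors by
\[
\Psi(x\ox g)(e,v)=x(e)\,g(v),\qquad x\in C_c(E^1),\ g\in C_c(E^0_I),\ (e,v)\in E^1_I,
\]
observing that since $E^1_I$ is a closed subspace of $E^1\times E^0_I$ and $\alpha$ is proper, $\Psi(x\ox g)$ lies in $C_c(E^1_I)\subseteq X(E_I)$. Then I would run through the routine checks that $\Psi$ descends to the balanced tensor product over $C_0(E^0)$ and is a correspondence morphism over $\Id_{C_0(E^0_I)}$; each required identity is a pointwise computation using only the two structural relations of an in-split, namely $s(e)=\alpha(v)$ for $(e,v)\in E^1_I$ and $\alpha\circ\psi=r$. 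Concretely: well-definedness over $C_0(E^0)$ is the identity $x(e)a(s(e))g(v)=x(e)a(\alpha(v))g(v)=x(e)\,\alpha^*(a)(v)\,g(v)$; the inner products match because $\sum_{s_I(e,v)=v}\overline{x(e)g(v)}\,y(e)g'(v)=\overline{g(v)}\big(\sum_{s(e)=\alpha(v)}\overline{x(e)}y(e)\big)g'(v)$, which is exactly $\big(x\ox g\mid y\ox g'\big)_{C_0(E^0_I)}(v)$; right $C_0(E^0_I)$-linearity is immediate; and $\Psi$ intertwines the left actions since $(\psi^*(g'')x)(e)\,g(v)=g''(\psi(e))x(e)g(v)=g''(r_I(e,v))\,\Psi(x\ox g)(e,v)$. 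In particular $\Psi$ is isometric for the inner-product norms, and since $C_c(E^1)\odot C_c(E^0_I)$ is dense in $X(E)\ox_{\alpha^*}C_0(E^0_I)$ it extends to an isometric morphism $\Psi\colon X(E)\ox_{\alpha^*}C_0(E^0_I)\to X(E_I)$ with closed range.

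The substance of the argument will be surjectivity of $\Psi$. I would let $\mathcal{A}_0$ be the linear span in $C_c(E^1_I)$ of the functions $(x\ox g)|_{E^1_I}$ with $x\in C_c(E^1)$, $g\in C_c(E^0_I)$; this is precisely the image of $\Psi$ on the algebraic tensor product, and since $(x\ox g)(x'\ox g')=(xx')\ox(gg')$ and $\overline{x\ox g}=\bar x\ox\bar g$ it is a self-adjoint subalgebra of $C_0(E^1_I)$ that separates points and vanishes nowhere, hence uniformly dense in $C_0(E^1_I)$ by Stone--Weierstrass. I then need to upgrade uniform density to density in the Hilbert-module norm of $X(E_I)$, and this happens in two steps. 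First, a cutoff trick controls supports: given $\xi\in C_c(E^1_I)$ with support $K$, pick $0\le x\le 1$ in $C_c(E^1)$ and $0\le g\le 1$ in $C_c(E^0_I)$ equal to $1$ on the images of $K$ under the two coordinate projections of $E^1\times E^0_I$; then $h\coloneqq(x\ox g)|_{E^1_I}\in\mathcal{A}_0$ equals $1$ on $K$ and has compact support, so multiplying a uniform approximant of $\xi$ by $h$ keeps it in the algebra $\mathcal{A}_0$, keeps it supported in the fixed compact set $\supp h$, and still approximates $\xi=h\xi$ uniformly. Second, on functions supported in a fixed compact $L\subseteq E^1_I$ the $X(E_I)$-norm is dominated by a constant times the uniform norm: covering the compact set $\{e:(e,v)\in L\text{ for some }v\}\subseteq E^1$ by finitely many, say $m$, open sets on which $s$ is injective bounds $\#\{e:s(e)=\alpha(v),\ (e,v)\in L\}$ by $m$ uniformly in $v$, whence $\|\zeta\|_{X(E_I)}\le\sqrt m\,\|\zeta\|_\infty$ for all such $\zeta$. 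Combining the two steps shows $C_c(E^1_I)$ lies in the $X(E_I)$-closure of $\mathcal{A}_0$, hence in the (closed) range of $\Psi$; since $C_c(E^1_I)$ is dense in $X(E_I)$, $\Psi$ is onto. A bijective isometric correspondence morphism is a correspondence isomorphism, so $(\psi^*\ox\Id,X(E)\ox_{\alpha^*}C_0(E^0_I))\cong(\phi_I,X(E_I))$ as desired.

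The step I expect to be the main obstacle is exactly this surjectivity argument. The naive Stone--Weierstrass approximation carries no support control, and the Hilbert-module norm is genuinely finer than the uniform norm, so both the cutoff trick and the local-finiteness estimate --- the latter being where the local-homeomorphism hypothesis on $s$ is actually used --- are essential; everything else is bookkeeping with the relations $s(e)=\alpha(v)$ on $E^1_I$ and $\alpha\circ\psi=r$.
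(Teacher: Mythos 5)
Your treatment of the isomorphism $X(E)\ox_{\alpha^*}C_0(E^0_I)\cong X(E_I)$ is correct: the map, the balancing identity, the inner-product computation and the left/right module checks all match what is needed, and your surjectivity argument (Stone--Weierstrass density of the span of the $(x\ox g)|_{E^1_I}$ in $C_0(E^1_I)$, a compactly supported cutoff from the algebra itself, and the estimate $\|\zeta\|_{X(E_I)}\le\sqrt{m}\,\|\zeta\|_\infty$ for $\zeta$ supported in a fixed compact set, with $m$ coming from a finite cover on which $s$ is injective) is sound. It differs from the paper's route, which avoids Stone--Weierstrass and any norm comparison: there one covers $\supp(\eta)$ by finitely many open sets on which $s_I$ is injective, takes a subordinate partition of unity $\{\rho_i\}$, and writes each $\rho_i\eta$ \emph{exactly} as $\beta(\zeta_i\ox\xi_i)$ with $\xi_i(v)=\rho_i\eta(s^{-1}(v),v)$ and $\zeta_i$ a Urysohn function, so that $C_c(E^1_I)$ lies in the algebraic image. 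Both arguments use local injectivity of the source map in the same essential way; yours trades the exact preimage for an approximation plus the observation that the module norm and the sup norm are comparable on functions with fixed compact support.

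The genuine gap is that you never prove the first assertion of the proposition: that $(\alpha^*,C_0(E^0_I),\psi^*)$ is an \emph{in-split} of $(\phi,X(E))$ in the sense of \cref{defn:correspondence_insplit}. This is not bookkeeping with $s(e)=\alpha(v)$ and $\alpha\circ\psi=r$. One must check that $\alpha^*$ is a well-defined, injective, nondegenerate $*$-homomorphism (well-definedness into $C_0(E^0)$ and injectivity use properness and surjectivity of $\alpha$), that $\psi^*$ is a bounded $*$-homomorphism into $\End_{C_0(E^0)}(X(E))$ with $\psi^*\circ\alpha^*=\phi$, and---crucially---the two covariance-ideal conditions: $\alpha^*(J_\phi)\subseteq J_{\psi^*}$ and that the induced map $A/J_\phi\to C_0(E^0_I)/J_{\psi^*}$ is an isomorphism. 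These rest on the identifications $J_\phi=C_0(E^0_{\reg})$ and $J_{\psi^*}=C_0(E^0_{I,\psi-\reg})$ together with the hypothesis, built into \cref{defn:topological_graph_insplit}, that $\alpha$ restricts to a homeomorphism between $E^0_{I,\psi-\sing}$ and $E^0_{\sing}$; this is the only place that hypothesis enters, and without these conditions the phrase ``in-split correspondence'' is not licensed and the proposition cannot feed into \cref{thm:insplits_give_isomorpisms}. Your proposal should include this verification (it is short, but it is a distinct argument from the module isomorphism you construct).
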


\begin{proof}
Let $A = C_0(E^0)$ and $A_I = C_0(E_I^0)$ be the coefficient algebras of $X(E)$ and $X(E_I)$, respectively.
Since $\alpha$ is a proper surjection there is a well-defined nondegenerate injective $*$-homomorphism $\alpha^* \colon A \to A_I$ given by $\alpha^*(f) = f \circ \alpha$ for all $f \in A$.
For each $g \in A_I$, define an endomorphism $\psi^*(g)$ on $C_c(E^1)$ by $\psi^*(g)x(e) \coloneqq g(\psi(e))x(e)$ for all $x \in C_c(E^1)$ and $e\in E^1$.
The computation 
\[
\|\psi^*(g)x\|^2 = \|(\psi^*(g)x \mid \psi^*(g)x)_A\|_{\infty} = \sup_{v \in E^0} \sum_{s(e)=v} |g(\psi(e))x(e)|^2 \le \|g\|_{\infty}^2 \|x\|^2,
\]
for all $x\in C_c(E^1)$ and $e\in E^1$
shows that the map $g \mapsto \psi^*(g)$ extends to a $*$-homomorphism $\psi^* \colon A_I \to \End_{A}(X(E))$ satisfying $\psi^*(g)^* = \psi^*(\bar{g})$.

Observe that $J_\phi = C_0(E^0_{\reg})$ and $J_{\psi} = C_0(E^0_{I,\psi-\reg})$, 
and since $\alpha$ restricts to a homeomorphism between $E^0_{\sing}$ and $E^0_{I,\psi-\sing}$, it follows that $\alpha$ maps $E_{I,\psi-\reg}$ onto $E^0_{\reg}$,
so $\alpha^*(J_\phi) \subseteq J_{\psi}$, and the induced map $\overline{\alpha} \colon C_0(E^0_{\sing}) \cong A/J_{\phi} \to A/J_{\psi} \cong C_0(E^0_{I,\psi-\sing})$ is a $*$-isomorphism.
Therefore, $(\alpha^*,A_I, \psi^*)$ is an in-split of the graph correspondence $(\phi, X(E))$. 

Next we verify that the $C^*$-correspondences $(\psi^* \ox \Id, X(E) \ox_{\alpha^*} C_0(E_I^0))$ and $(\phi_I, X(E_I))$ are isomorphic.
Define $\beta \colon C_c(E^1) \ox_{\alpha^*} C_c(E^0_I) \to C_c(E^1_I)$ by $\beta(x \ox f) (e,v) = x(e)f(v)$, for all $x\in C_c(E^1)$, $f\in C_c(E_I^0)$, and $(e,v)\in E_I^1$.
For $x,x\in C_c(E^1)$ and $f,f'\in C_c(E_I^0)$, the computation 
\begin{align*}
(\beta(x \ox f)\mid \beta(x' \ox f'))_{A_I}(v)
&= \sum_{s_I(e,v)=v} \ol{x(e)} x'(e) \ol{f(v)} f'(v) 
= \sum_{s(e) = \alpha(v)} \ol{x(e)}x'(e) \ol{f(v)}f'(v) \\
&= (x \mid x')_A(\alpha(v)) \ol{f(v)}f'(v)
= (f \mid \alpha^*((x \mid x')_A)f')_{A_{I}} (v)\\
&= (x \ox f \mid x' \ox f')_{A_I} (v),
\end{align*}
shows that $\|\beta(x \ox f)\| = \|x \ox f\|$. 
Consequently, $\beta$ extends to an isometric  linear map $\beta \colon X(E) \ox_{\alpha^*} A_I \to X(E_I)$.

If $x\in C_c(E^1)$ and $g,g'\in A_I$, then $\beta( (x\ox g)\cdot g') = \beta(x\ox g)\cdot g'$
and 
\begin{align*} 
\phi_I(g')\beta(x\ox g)(e,v) &=\! g'(\psi(e)) x(e) g(v) 
=\! \beta( (\psi^*\ox \Id)(g') x\ox g)(e,v),
\end{align*}
for all $(e,v)\in E_I^1$.
This shows that $(\Id, \beta)\colon (\psi^* \ox \Id, X(E) \ox_{\alpha^*} C_0(E_I^0)) \to (\phi_I, X(E_I))$ is a correspondence morphism.

It remains to verify that $\beta$ is surjective.
Fix $\eta \in C_c(E_I^1)$. 
Since $s_I$ is a local homeomorphism, we can cover $\supp(\eta)$ by finitely many open sets $\{U_i\}$ such that $s_I|_{U_i}$ is injective. Let $\{\rho_i\}$ be a partition of unity subordinate to the cover $\{U_i\}$.
Then $\rho_i\eta$ has support in $U_i$.

Define $\xi_i\in C_c(E^0_I)$ by $\xi_i(v)=\rho_i\eta(s^{-1}(v),v)$, and
use Urysohn's Lemma to find $\zeta_i\in C_c(E^1)$ such that $\zeta_i$ is identically 1 on the compact set
\[
\{e\in E^1:\,\mbox{there exists }v\in E^0_I\ \mbox{such that } (e,v)\in {\rm supp}(\rho_i\eta)\}.
\]
Then $\rho_i\eta=\beta(\zeta_i\ox \xi_i)$ by construction and so
\[
\eta=\sum_i\rho_i\eta=\sum_i\beta(\zeta_i\ox \xi_i)
\]
is in the image of $\beta$. As $\eta\in C_c(E^1_I)$ is arbitrary, $\beta$ is surjective.
\end{proof}

Every discrete directed graph is---in particular---a topological graph, so \cref{prop:graph_insplit_gives_correspondence_insplit} also applies to directed graphs. 
Since in-splits are examples of strong shift equivalences, ~\cref{thm:sse-equivariant-morita-equivalence} shows that the associated Cuntz--Pimsner algebras 
are gauge-equivariantly Morita equivalent.

\begin{prop} 
Let $(\phi, {}_AX_A)$ be a $C^*$-correspondence and let $(\alpha,B,\psi)$ be an in-split.
Then $(\phi, {}_AX_A)$ is 
strong shift equivalent to the in-split correspondence $(\psi\ox \Id, X\ox_\alpha B)$. Hence $\cO_X$ is gauge equivariantly Morita equivalent to $\cO_{X\ox_\alpha B}$.
\end{prop}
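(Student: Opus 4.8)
The plan is to produce an elementary strong shift equivalence explicitly; strong shift equivalence is then immediate, and the Morita equivalence follows from \cref{thm:sse-equivariant-morita-equivalence}. The two correspondences to use are $R \coloneqq (\alpha, {}_A B_B)$, namely $B$ viewed as a right Hilbert $B$-module over itself with left $A$-action $\alpha$, and $S \coloneqq (\psi, {}_B X_A)$, namely the right Hilbert $A$-module $X$ with left $B$-action $\psi$. Both are genuine correspondences, since $\alpha(A) \subseteq B \subseteq \End_B(B)$ and $\psi \colon B \to \End_A(X)$ is a $*$-homomorphism. I claim that $R \ox_B S \cong X$ as $A$--$A$-correspondences, and that $S \ox_A R$ \emph{equals} the in-split correspondence $(\psi \ox \Id, X \ox_\alpha B)$.

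First I would check $R \ox_B S \cong X$. Consider the map $B \ox_B X \to X$ determined on elementary tensors by $b \ox x \mapsto \psi(b)x$. It is well defined on the $B$-balanced tensor product because $bb' \ox x$ and $b \ox (b' \cdot x) = b \ox \psi(b')x$ are both sent to $\psi(b)\psi(b')x$; it is right $A$-linear; and it is isometric, since
\[
(\psi(b_1)x_1 \mid \psi(b_2)x_2)_A = (x_1 \mid \psi(b_1^* b_2)x_2)_A = (b_1 \ox x_1 \mid b_2 \ox x_2)_A .
\]
Hence it extends to an isometry on the completion $B \ox_B X$. Because $\phi$ and $\alpha$ are nondegenerate, so is $\psi$ (as observed after \cref{defn:correspondence_insplit}), so the image $\psi(B)X$ is dense and the map is an isomorphism of Hilbert $A$-modules. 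Finally, $\psi \circ \alpha = \phi$ forces it to intertwine the left $A$-action on $R \ox_B S$ — which sends $a$ to left multiplication by $\alpha(a)$ on the $B$-leg — with $\phi$ on $X$, since $\psi(\alpha(a)b)x = \phi(a)\psi(b)x$. Thus $R \ox_B S \cong X$ as correspondences over $A$.

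For the second claim, $S \ox_A R$ is by construction the $A$-balanced tensor product of $X$ with $B$, where $B$ carries the left $A$-action $\alpha$; this is exactly the module $X \ox_\alpha B$ of \cref{defn:correspondence_insplit}, with the same $B$-valued inner product. Its left $B$-action acts on the $X$-leg via $\psi$, which is precisely $\psi \ox \Id$, and its right $B$-action is on the $B$-leg. So $S \ox_A R$ is literally $(\psi \ox \Id, X \ox_\alpha B)$. This shows $(\phi, {}_A X_A)$ and $(\psi \ox \Id, X \ox_\alpha B)$ are elementary strong shift equivalent, hence strong shift equivalent, and \cref{thm:sse-equivariant-morita-equivalence} then gives that $\cO_X$ is gauge-equivariantly Morita equivalent to $\cO_{X \ox_\alpha B}$ (invoking that theorem requires the correspondences to be nondegenerate and regular — in the cases of interest one reduces regularity of the in-split correspondence to properties of $\psi$ via \cref{lem:tee-ox1}, which identifies $J_{\psi \ox \Id_B} = J_\psi$).

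The only step with any content is the use of nondegeneracy of $\psi$ to get that $B \ox_B X \to X$ is onto; everything else is checking that the variances match the conventions in the definition of elementary strong shift equivalence and confirming that $S \ox_A R$ is on the nose the in-split correspondence rather than merely isomorphic to it. I expect no real obstacle here — the slight subtlety is purely bookkeeping about which coefficient algebra each balanced tensor product is taken over.
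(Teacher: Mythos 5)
Your proof is correct and is essentially the paper's own argument: the paper also takes the $A$--$B$ correspondence $(\alpha,{}_AB_B)$ and the $B$--$A$ correspondence $(\psi,{}_BX_A)$, identifies one balanced tensor product with $(\phi,{}_AX_A)$ via $b\ox x\mapsto\psi(b)x$ and the other with $(\psi\ox\Id, X\ox_\alpha B)$, and then invokes \cref{thm:sse-equivariant-morita-equivalence}. You simply supply more of the routine verifications (well-definedness, isometry, nondegeneracy of $\psi$ for surjectivity) than the paper records.
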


\begin{proof}
Consider the $C^*$-correspondences $R = (\psi, {}_B X_A)$ and $S = (\alpha, {}_A B_B)$ 
and observe that $S\ox R$ is isomorphic to $(\phi, {}_A X_A)$ via the map $b\ox x\mapsto \psi(b)x$ for all $x\in X_A$ and $b\in B$,
while $R\ox S$ is the in-split $(\psi\ox \Id, X\ox_\alpha B)$.
This is a strong shift equivalence.
\end{proof}

For in-splits more is true: 
they are gauge-equivariantly $*$-isomorphic (see \cref{thm:insplits_give_isomorpisms}), generalising \cite[Theorem 3.2]{Bates-Pask} (see also \cite[Theorem 3.2]{Eilers-Ruiz}).
First, we need a lemma.

\begin{lemma}\label{lem:beta_is_well_defined}
Let $X_A$ be a right Hilbert $A$-module and suppose that $\alpha \colon A \to B$ is an injective $*$-homomorphism. Then there is a well-defined injective linear map  $\beta\colon X \to X\ox_\alpha B$ satisfying $\beta(x\cdot a) = x\ox \alpha(a)$ for all $x\in X$ and $a\in A$. 

Moreover, suppose $\alpha$ is nondegenerate, $X_A$ is countably generated, and $A$ is $\sigma$-unital. Let $(x_i)_i$ be a countable frame for $X_A$ and let $(u_j)_{j}$ be an increasing approximate unit for $A$. With $a_j \coloneqq (u_j - u_{j-1})^{1/2}$ the sequence $(x_i \ox \alpha(a_j))$ is a frame for $X_A \ox_{\alpha}  B$.
\end{lemma}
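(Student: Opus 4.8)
I would treat the two assertions of the lemma separately; both reduce to facts already available, with \cref{prop:tensor_product_of_frames} doing the real work for the second.

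For the first assertion, the plan is to define $\beta$ on the dense submodule $\linspan\{x\cdot a : x\in X,\ a\in A\}\subseteq X$ (which is dense since $x\cdot u_\lambda\to x$ for any approximate unit $(u_\lambda)$ of $A$) by $\sum_k x_k\cdot a_k\mapsto\sum_k x_k\ox\alpha(a_k)$, and to check that this is well defined and isometric. Both follow from the identity
\[
\Bigl\|\sum_k x_k\ox\alpha(a_k)\Bigr\|=\Bigl\|\sum_k x_k\cdot a_k\Bigr\|,
\]
which is obtained by expanding both sides into $B$- and $A$-valued inner products and using that $\alpha$, being an injective $*$-homomorphism, is multiplicative (to pull it outside the double sum $\sum_{k,l}\alpha(a_k)^*\alpha((x_k\mid x_l)_A)\alpha(a_l)$) and isometric (to delete it). Extending by continuity yields the injective linear map $\beta\colon X\to X\ox_\alpha B$ with $\beta(x\cdot a)=x\ox\alpha(a)$; injectivity is automatic from isometry.

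For the second assertion I would view $B$ as the $A$--$B$-correspondence $({}_AB_B)$ with left action $\alpha\colon A\to B\subseteq\End_B(B)$, so that $X\ox_\alpha B$ is the interior tensor product of $X_A$ with this correspondence, and note that $X_A$ and $B_B$ are countably generated since $A$ and $B$ are $\sigma$-unital. By \cref{prop:tensor_product_of_frames} it then suffices to prove that $(\alpha(a_j))_j$ is a frame for the right Hilbert $B$-module $B_B$. Since $a_j=(u_j-u_{j-1})^{1/2}$ is positive (the approximate unit being increasing, with the convention $u_0:=0$), the rank-one operator $\Theta_{\alpha(a_j),\alpha(a_j)}$ on $B_B$ is left multiplication by $\alpha(a_j)\alpha(a_j)^*=\alpha(u_j-u_{j-1})$, so under the identification $\End_B(B)\cong\Mult(B)$ we get
\[
\sum_{j=1}^{n}\Theta_{\alpha(a_j),\alpha(a_j)}=\alpha(u_n)\in B\subseteq\Mult(B).
\]
Because $\alpha$ is nondegenerate it extends to a strictly continuous unital $*$-homomorphism $\Mult(A)\to\Mult(B)$, and the partial sums $u_n$ of an increasing approximate unit converge strictly to $1_{\Mult(A)}$; hence $\alpha(u_n)\to1_{\Mult(B)}$ strictly, i.e.\ $\sum_j\Theta_{\alpha(a_j),\alpha(a_j)}$ converges strictly to $\Id_B$ in $\End_B(B)$. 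Thus $(\alpha(a_j))_j$ is a frame for $B_B$, and \cref{prop:tensor_product_of_frames} gives that $(x_i\ox\alpha(a_j))_{i,j}$ is a frame for $X\ox_\alpha B$.

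The argument is short, and the only point I expect to require care is the strict convergence $\alpha(u_n)\to1_{\Mult(B)}$: it is here that \emph{nondegeneracy} of $\alpha$ (not merely injectivity) is used, since this is what promotes $\alpha$ to a strictly continuous \emph{unital} homomorphism of multiplier algebras, and hence ensures that the telescoping sum converges to the identity operator rather than to some proper multiplier. The genuinely technical part — interchanging the double sum over $\N^2$ and upgrading the pointwise convergence on elementary tensors to strict convergence of operators — is already contained in the proof of \cref{prop:tensor_product_of_frames}, so I would not reprove it.
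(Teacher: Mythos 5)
Your proof is correct and follows essentially the same route as the paper: the isometry identity obtained by pulling the injective $*$-homomorphism $\alpha$ through the inner products gives well-definedness and injectivity of $\beta$, and the frame statement is reduced to \cref{prop:tensor_product_of_frames}. The only differences are cosmetic: the paper defines $\beta$ on all of $X$ at once using the factorisation $x = x'\cdot(x'\mid x')_A$ rather than extending from the dense span, and it only records that $(a_j)_j$ is a frame for $A_A$ before citing \cref{prop:tensor_product_of_frames}, whereas you spell out the (implicit but needed) step that nondegeneracy of $\alpha$ and strict convergence of $\alpha(u_n)$ make $(\alpha(a_j))_j$ a frame for $B_B$.
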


\begin{proof}
For any $x\in X_A$ there is a unique $x'\in X_A$ such that $x = x' \cdot (x' \mid x')_A$, cf.~\cite[Proposition 2.31]{Raeburn-Williams},
so we may assume that any element in $X_A$ is of the form $x\cdot a$, for some $x\in X_A$ and $a\in A$.
Observe that for any $x_1, x_2\in X_A$ and $a_1,a_2\in A$, we have
\begin{equation} \label{eq:obs1}
(x_1\ox \alpha(a_1) \mid x_2\ox \alpha(a_2))_B = \alpha( (x_1\cdot a_1 \mid x_2\cdot a_2)_A),
\end{equation}
so 
\begin{align*}
\|x_1 \ox \alpha(a_1) - x_2 \ox \alpha(a_2)\|^2 
&= \|(x_1 \ox \alpha(a_1) \mid x_1 \ox \alpha(a_1)  )_B - (x_1 \ox \alpha(a_1) \mid x_2 \ox \alpha(a_2)  )_B\\
&\quad - (x_2 \ox \alpha(a_2) \mid x_1 \ox \alpha(a_1)  )_B + (x_2 \ox \alpha(a_2) \mid x_2 \ox \alpha(a_2)  )_B\|\\
&= \|\alpha( (x_1 \cdot a_1 \mid x_1 \cdot a_1)_A - (x_1 \cdot a_1 \mid x_2 \cdot a_2)_A \\  
&\quad - (x_2 \cdot a_2 \mid x_1 \cdot a_1)_A + (x_2 \cdot a_2 \mid x_2 \cdot a_2)_A)\|\\
&= \|x_1 \cdot a_1 - x_2 \cdot a_2\|^2.
\end{align*}
This computation shows that $\beta\colon X \to X\ox_\alpha B$ given by $\beta(x\cdot a) = x\ox \alpha(a)$ for all $x\in X$ and $a\in A$ is well-defined and isometric.

For the second statement, observe that $(a_i)_{i \in \N}$ is a frame for $A$ as a right $A$-module since
\[
\sum_{i=1}^j a_i \cdot (a_i \mid a)_A = \sum_{i=1}^j a_ia_i^*a = u_j a \to a
\]
as $j \to \infty$. The result now follows from \cref{prop:tensor_product_of_frames}.
\end{proof}

\begin{thm} \label{thm:insplits_give_isomorpisms}
Let $(\phi, {}_A X_A)$ be a countably generated correspondence over a $\sigma$-unital \mbox{$C^*$-algebra} $A$, let $(\alpha, B, \psi)$ be an in-split, 
and let $(\psi \ox \Id, X\ox_{\alpha} B)$ be the in-split correspondence. 
With the map $\beta$ as in \cref{lem:beta_is_well_defined}, the pair $(\alpha,\beta)\colon (\phi, X) \to (\psi \ox \Id, X\ox_\alpha B)$ is a covariant correspondence morphism.
The induced $*$-homomorphism $\alpha\times \beta\colon \cO_X \to \cO_{X\ox_\alpha B}$ is a gauge-equivariant $*$-isomorphism.
\end{thm}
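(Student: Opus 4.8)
The plan is to verify that $(\alpha,\beta)$ is a covariant correspondence morphism, to invoke \cref{lem:induced-gauge-invariant-homomorphism} for an injective gauge-equivariant $*$-homomorphism $\alpha\times\beta\colon \cO_X\to\cO_{X\ox_\alpha B}$, and then to prove surjectivity by hand.

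First I would check conditions (i)--(iii) of \cref{def:cozzymorph}. Condition (i) is precisely the identity \labelcref{eq:obs1} already established in the proof of \cref{lem:beta_is_well_defined}; conditions (ii) and (iii) follow by writing an arbitrary element of $X$ as $x\cdot a$ (using \cite[Proposition 2.31]{Raeburn-Williams}), using $\psi\circ\alpha=\phi$, and recalling that the left action of the in-split correspondence satisfies $(\psi\ox\Id_B)(\alpha(a))(x\ox b)=\phi(a)x\ox b$. For covariance I must show $\beta^{(1)}\circ\phi(c)=(\psi\ox\Id_B)\circ\alpha(c)$ for every $c\in J_\phi$. Here $\alpha(c)\in J_\psi$ by definition of an in-split, and $(\psi\ox\Id_B)(\alpha(c))$ is the operator $\phi(c)\ox\Id_B$ on $X\ox_\alpha B$, which is compact by the argument used in the proof of \cref{lem:tee-ox1}; so the claim reduces to $\beta^{(1)}(\phi(c))=\phi(c)\ox\Id_B$. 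The relation $\beta^{(1)}(T)\beta(\xi)=\beta(T\xi)$ for $T\in\End^0_A(X)$ and $\xi\in X$ (verified on rank-one operators and extended by linearity and continuity) shows the two operators agree on each vector $x\ox\alpha(a)=\beta(x\cdot a)$; since both are right $B$-module maps and the submodule spanned by $\{x\ox\alpha(a)b : x\in X,\, a\in A,\, b\in B\}$ is dense in $X\ox_\alpha B$ (because $\overline{\alpha(A)B}=B$ by nondegeneracy of $\alpha$), they coincide.

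Granting covariance, \cref{lem:induced-gauge-invariant-homomorphism} produces a gauge-equivariant $*$-homomorphism $\alpha\times\beta\colon\cO_X\to\cO_{X\ox_\alpha B}$ with $(\alpha\times\beta)\circ\iota_A=\iota_B\circ\alpha$ and $(\alpha\times\beta)\circ\iota_X=\iota_{X\ox_\alpha B}\circ\beta$, injective because $\alpha$ is. The remaining and main task is surjectivity. The image of $\alpha\times\beta$ is a closed $*$-subalgebra of $\cO_{X\ox_\alpha B}$ containing $\iota_B(\alpha(A))$ and $\iota_{X\ox_\alpha B}(\beta(X))$. The crucial input is the frame for $X\ox_\alpha B$ from \cref{lem:beta_is_well_defined}: with $(x_i)$ a frame for $X_A$ and $a_j=(u_j-u_{j-1})^{1/2}$ built from an increasing approximate unit of $A$, each frame vector $x_i\ox\alpha(a_j)$ equals $\beta(x_i\cdot a_j)$ and so lies in $\beta(X)$. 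Now fix $k\in J_\psi$; by \cref{lem:tee-ox1} this is the covariance ideal of the in-split correspondence, so covariance of the universal covariant representation gives $\iota_B(k)=\iota_{X\ox_\alpha B}^{(1)}\!\left((\psi\ox\Id_B)(k)\right)$. Writing the compact operator $(\psi\ox\Id_B)(k)$ as the norm limit, over finite $\Sigma\subset\subset\N^2$, of the partial sums $\sum_{(i,j)\in\Sigma}\Theta_{(\psi\ox\Id_B)(k)(x_i\ox\alpha(a_j)),\,x_i\ox\alpha(a_j)}$ (using that partial sums of $\sum_{i,j}\Theta_{x_i\ox\alpha(a_j),\,x_i\ox\alpha(a_j)}$ form an approximate unit, as in the proof of \cref{prop:tensor_product_of_frames}), and observing that every vector occurring is of the form $\beta(\,\cdot\,)$, the norm-continuous map $\iota_{X\ox_\alpha B}^{(1)}$ exhibits $\iota_B(k)$ as a norm limit of elements $(\alpha\times\beta)\!\left(\iota_X(\cdot)\iota_X(\cdot)^*\right)$. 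Thus $\iota_B(k)\in\range(\alpha\times\beta)$; combined with $\iota_B(\alpha(A))$ and the decomposition $b=\alpha(a)+k$ ($k\in J_\psi$) from \cref{lem:b_decomposition}, this gives $\iota_B(B)\subseteq\range(\alpha\times\beta)$. Finally $\iota_{X\ox_\alpha B}(x\ox\alpha(a))\,\iota_B(b)=\iota_{X\ox_\alpha B}(x\ox\alpha(a)b)$ together with $\overline{\alpha(A)B}=B$ gives $\iota_{X\ox_\alpha B}(X\ox_\alpha B)\subseteq\range(\alpha\times\beta)$; since $\iota_B(B)$ and $\iota_{X\ox_\alpha B}(X\ox_\alpha B)$ generate $\cO_{X\ox_\alpha B}$, the map $\alpha\times\beta$ is onto, hence a gauge-equivariant $*$-isomorphism.

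I expect the surjectivity step to be the main obstacle — specifically the claim that $\iota_B(J_\psi)$ lies in the image. That is exactly where one needs $X\ox_\alpha B$ to carry a frame consisting of $\beta$-images of elements of $X$ (the content of \cref{lem:beta_is_well_defined}), together with \cref{lem:b_decomposition} and the covariance relation inside $\cO_{X\ox_\alpha B}$. The morphism and covariance verifications, though requiring some care with the identification $T\mapsto T\ox\Id_B$, are largely bookkeeping.
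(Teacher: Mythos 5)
Your proposal is correct and follows essentially the same route as the paper: verify the morphism and covariance, get injectivity and gauge-equivariance from \cref{lem:induced-gauge-invariant-homomorphism}, then prove surjectivity using \cref{lem:b_decomposition}, \cref{lem:tee-ox1}, covariance of the universal representation, and the frame of $\beta$-images from \cref{lem:beta_is_well_defined}. The only differences are cosmetic: the paper establishes $\beta^{(1)}(T)=T\ox\Id_B$ directly from the frame (its equation \labelcref{eq:beta1_is_tensor_id}) and then gets $\iota_B(J_\psi)\subseteq\range(\alpha\times\beta)$ via the identity $(\alpha\times\beta)\circ\iota_X^{(1)}=\iota_{X\ox_\alpha B}^{(1)}\circ\beta^{(1)}$, whereas you check the operator identity on a dense submodule and re-derive the range containment by a norm-limit frame expansion—both amount to the same computation.
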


\begin{proof}
We first verify that $(\alpha, \beta)\colon (\phi, X) \to (\psi \ox \Id, X\ox_\alpha B)$ is a correspondence morphism.
For the right action, we see for $x\in X_A$ and $a,a'\in A$ that
\[
\beta(x\cdot a)\cdot \alpha(a') = x\ox \alpha(a a') = \beta( (x\cdot a) \cdot a'),
\]
and for the left action, we apply $\psi\circ \alpha = \phi$ to observe that
\[
(\psi\ox \Id)(\alpha(a')) \beta(x\cdot a) = \phi(a') x \ox \alpha(a) = \beta(\phi(a') x\cdot a),
\]
for all $x\in X_A$ and $a,a'\in A$.
Together with~\labelcref{eq:obs1} this shows that $(\alpha,\beta)$ is a correspondence morphism.  

For covariance of $(\alpha, \beta)$ let  $(x_i \ox \alpha(a_j))$ be the frame for $X_A \ox_{\alpha}  B$ as defined in \cref{lem:beta_is_well_defined}.
Then for $T \in \End_A^0(X)$, 
\begin{equation}\label{eq:beta1_is_tensor_id}
\beta^{(1)}(T) 
= \sum \Theta_{\beta( T x_i \cdot a_{j}), \beta(x_i \cdot a_{j})} 
= (T \ox \Id_B) \sum \Theta_{x_i \ox \alpha(a_j), x_i \ox \alpha(a_j)}
= T \ox \Id_B.
\end{equation}
Let $a \in J_{\phi}$. Then setting $T = \phi_X(a)$ we have 
\[
\beta^{(1)} \circ \phi_X(a) = \phi_X(a) \ox \Id_B =(\psi\circ\alpha(a) )\ox \Id_B= (\psi \ox \Id_B) \circ \alpha (a)
\] 
so $(\alpha,\beta)$ is covariant. 
Since $\alpha$ is injective, we know from~\cref{lem:induced-gauge-invariant-homomorphism} that $\alpha\times \beta$ is injective and gauge-equivariant. 

For surjectivity we first claim that $\iota_B(B)$ lies in the image of $\alpha \times \beta$. 
Fix $b \in B$ and write $b = \alpha(a) + k$ for some $a \in A$ and $k \in J_{\psi} = J_{\psi\ox \Id_B}$ using \cref{lem:b_decomposition}.
Since $\psi(k)$ is compact, we get $\beta^{(1)} (\psi(k)) = \psi(k) \ox \Id_B$. 
It then follows from covariance of $(\iota_B,\iota_{X \ox_{\alpha} B})$ that 
\[
\iota_B(k) = \iota^{(1)}_{X \ox_{\alpha} B} \circ (\psi \ox \Id_B)(k) = \iota^{(1)}_{X \ox_{\alpha} B} \circ \beta^{(1)} (\psi(k)) = (\alpha \times \beta) \circ \iota_X^{(1)} (\psi(k))
\]
also lies in the image of $\alpha \times \beta$. Consequently,
\[
\iota_B(b) = \iota_B(\alpha(a)) + \iota_B(k) = (\alpha \times \beta)(\iota_A(a)) + \iota_B(k) \in (\alpha \times \beta) (\cO_X).
\] 
Finally, observe that if $x\cdot a \ox b \in X \ox_A B$, then
\[
\iota_{X \ox_\alpha B}(x\cdot a \ox b) = (\alpha \times \beta)   (\iota_X(x \cdot a)) \iota_B(b)
\]
which is in the image of $\alpha\times \beta$.
This shows that $\alpha\times \beta$ is surjective, and we conclude that it is a gauge-equivariant $*$-isomorphism.
\end{proof}

\begin{example} \label{ex:maximal_insplit}
Let $(\phi, {}_AX_A)$ be a regular $C^*$-correspondence and let $(\alpha,B,\psi)$ be an in-split.
Since both $\phi$ and $\alpha$ are injective, we may identify $B$ with a subalgebra of $\End_A^0(X)$ that contains $\phi(A)$.
Conversely, any $C^*$-algebra $B$ satisfying $\phi(A) \subset B \subset \End^0_A(X)$ determines an in-split $(\psi, B, \phi)$ where $\psi\colon B \to \End_A(X)$ is the inclusion.
Therefore, there is a gauge-equivariant $*$-isomorphism $\cO_X \cong \cO_{X\ox_{\phi} B}$.
In particular---as noted in \cite[Example 6.4]{Menevse}---there is a gauge-equivariant $*$-isomorphism $\cO_X \cong \cO_{X\ox_{\phi} \End_A^0(X)}$.
\end{example}

Consider a regular correspondence $(\phi,{}_A X_A)$ and let $i \colon \End_A^0(X) \to \End_A(X)$ denote the inclusion. 
Then $(i \ox \Id_{\End_A^0(X)}, X \ox \End_A^0(X))$ may be thought of as a ``maximal'' in-split of $(\phi,{}_A X_A)$ in analogy to the dual graph in the setting of topological graphs. 
This analogy is further justified by the following noncommutative version of \cref{lem:top_graph_diamond_lemma}. 
\begin{lemma}
Let $(\phi,{}_A X_A)$ be a regular nondegenerate $C^*$-correspondence with an in-split $I = (\alpha,B,\psi)$. Let $\alpha_1 \colon \End_A^0(X) \to \End_A^0(X \ox_{\alpha} B)$ be the map defined by $\alpha_1(T) = T \ox \Id_B$. Then $I' = (\psi, \End_A^0(X), \alpha_1)$ is an in-split of $(\psi \ox \Id_B, X \ox_{\alpha} B)$ and 
\[
\big(\alpha_1 \ox \Id_{\End_A^0(X)}, (X \ox_{\alpha} B)\ox_{\psi} \End_A^0(X)\big)
\cong
\big(i \ox \Id_{\End_A^0(X)}, X \ox_{\phi} \End_A^0(X)\big)
\]
as $\End_A^0(X)$--$\End_A^0(X)$-correspondences.
\end{lemma}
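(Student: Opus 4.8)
The plan is to handle the two assertions separately, exploiting regularity of $(\phi,{}_AX_A)$ at every step. Regularity means $\ker\phi=0$ and $\phi(A)\subseteq\End_A^0(X)$, so $J_\phi=A$; condition~(ii) of \cref{defn:correspondence_insplit} then forces $J_\psi=B$, whence $\psi$ is injective and $\psi(B)\subseteq\End_A^0(X)$. By \cref{lem:tee-ox1} we also have $J_{\psi\ox\Id_B}=J_\psi=B$, so the in-split correspondence $(\psi\ox\Id_B,X\ox_\alpha B)$ is itself regular. A recurring technical fact I would isolate first is the nondegeneracy identity $\overline{\psi(B)\End_A^0(X)}=\End_A^0(X)$: it follows from nondegeneracy of $\phi$ by writing a vector $x$ as a limit of vectors $\phi(a_n)x_n$, so that $\Theta_{x,y}=\lim_n\phi(a_n)\Theta_{x_n,y}\in\overline{\phi(A)\End_A^0(X)}\subseteq\overline{\psi(B)\End_A^0(X)}$, and then using density of the finite-rank operators in $\End_A^0(X)$.

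For the first assertion I would check the three requirements of \cref{defn:correspondence_insplit} for the triple $(\psi,\End_A^0(X),\alpha_1)$, regarded as a candidate in-split of $(\psi\ox\Id_B,X\ox_\alpha B)$. That $\psi\colon B\to\End_A^0(X)$ is a nondegenerate injective $*$-homomorphism is exactly the observations above; the relation $\alpha_1\circ\psi=\psi\ox\Id_B$ is immediate from $\alpha_1(T)=T\ox\Id_B$. For the remaining two conditions I would compute $J_{\alpha_1}=\End_A^0(X)$: the argument in the proof of \cref{lem:tee-ox1} (together with $\alpha^{-1}(B)=A$) shows $\alpha_1$ sends compacts into $\End_B^0(X\ox_\alpha B)$, and a short computation with the $B$-valued inner product on $X\ox_\alpha B$ using injectivity and nondegeneracy of $\alpha$ shows $\alpha_1$ is injective, so $J_{\alpha_1}=\alpha_1^{-1}(\End_B^0(X\ox_\alpha B))\cap\ker(\alpha_1)^\perp=\End_A^0(X)$. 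Since $J_{\psi\ox\Id_B}=B$ and $J_{\alpha_1}=\End_A^0(X)$, the inclusion $\psi(J_{\psi\ox\Id_B})\subseteq J_{\alpha_1}$ is clear and the induced map on the zero quotients is trivially an isomorphism.

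For the isomorphism of $\End_A^0(X)$--$\End_A^0(X)$-correspondences I would define $\Phi\colon(X\ox_\alpha B)\ox_\psi\End_A^0(X)\to X\ox_\phi\End_A^0(X)$ on elementary tensors by $\Phi((x\ox b)\ox T)=x\ox\psi(b)T$, which is well posed precisely because $\psi(b)\in\End_A^0(X)$ by regularity. Direct substitution, using $\psi\circ\alpha=\phi$, shows that $\Phi$ respects the $A$-balancing inside $X\ox_\alpha B$ and the $B$-balancing of the outer tensor product, that it is right $\End_A^0(X)$-linear, and that it intertwines the left actions $\alpha_1\ox\Id_{\End_A^0(X)}$ and $i\ox\Id_{\End_A^0(X)}$. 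The inner product is preserved by the same kind of computation, since $\big((x\ox b)\ox T\mid(x'\ox b')\ox T'\big)_{\End_A^0(X)}=T^*\psi\big(b^*\alpha((x\mid x')_A)b'\big)T'=(\psi(b)T)^*\phi((x\mid x')_A)(\psi(b')T')$. Hence $\Phi$ is an isometric correspondence morphism over $\Id_{\End_A^0(X)}$, so it is injective with closed range, and its range is the closed span of the vectors $x\ox\psi(b)T$; surjectivity then follows from the identity $\overline{\psi(B)\End_A^0(X)}=\End_A^0(X)$ established at the outset, so $\Phi$ is the desired isomorphism.

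I expect the only step that is more than bookkeeping to be the nondegeneracy identity $\overline{\psi(B)\End_A^0(X)}=\End_A^0(X)$: it powers both the nondegeneracy of $\psi$ needed in the first assertion and the surjectivity of $\Phi$ in the second, and it is the one place where the regularity hypothesis has to be combined with a genuine (if short) approximation argument rather than invoked formally through \cref{lem:tee-ox1}. Everything else amounts to substituting the definitions of the various balanced tensor products and bookkeeping with covariance ideals.
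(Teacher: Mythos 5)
Your proof is correct, and in fact the paper states this lemma without giving a proof, so there is no argument of the authors' to compare against. Your route — using regularity to get $J_\psi = B$, hence $\psi$ injective with $\psi(B)\subseteq \End_A^0(X)$ and (via \cref{lem:tee-ox1}) $J_{\psi\ox\Id_B}=B$, computing $J_{\alpha_1}=\End_A^0(X)$ from the compactness of $T\ox\Id_B$ as in the proof of \cref{lem:tee-ox1}, isolating the nondegeneracy identity $\overline{\psi(B)\End_A^0(X)}=\End_A^0(X)$, and implementing the isomorphism by the isometric, inner-product-preserving map $(x\ox b)\ox T\mapsto x\ox\psi(b)T$ whose surjectivity follows from that identity — is exactly the argument the surrounding material (\cref{lem:tee-ox1}, \cref{ex:maximal_insplit}) sets up, and it fills the omitted proof completely.
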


\begin{example}
\label{eg:circles-CP}
Let $E = (E^0,E^1,r,s)$ be the topological graph of \cref{eg:n-m-circles}, with $r (z)= z^m$ and $s(z) = z^n$ and let  $E_I = (E_I^0,E_I^1,r_I,s_I)$ be the in-split graph of \cref{eg:n-m-circles}. 
In particular, $E_I^0 = \bT$, $E_I^1 = \bigsqcup_{k=0}^{\gcd(n,b)-1} \bT$, $r_I(k,z) = z^{m/\gcd(n,b)}$ and $s_I(k,z) = \lambda^k z^{n/\gcd(n,b)}$ for some fixed $|b|$-th root of unity $\lambda$. 
Then $\cO_{X(E)}$ and $\cO_{X(E_I)}$ are gauge equivariantly $*$-isomorphic.

Consider $E$ when $m = n = 2$, and define a directed graph $F = (F^0,F^1,r_F,s_F)$ with vertices $F^0= \bT$, edges $F^1 = \{0,1\} \times \bT$, and $r_F(z) = s_F(z) = z$. 
It is shown in \cite[\S 5]{Frausino-Ng-Sims} that the graph correspondence $X(E)$ is isomorphic to the graph correspondence $X(F)$, while $E$ and $F$ are not isomorphic as graphs. 

Let $a = 1$ and $b = 2$, so $\gcd(n,b) = 2$. 
The in-split $E_I$ has vertices $E_I^0 = \bT$ and edges $E_I^1 = \{0,1\} \times \bT$ with $r_I(k,z) = (-1)^k z^2$ and $s_I(k,z) = z^2$. 
Here we have used the second description of $E_I$ from \cref{eg:n-m-circles}. 
Since the edges, vertices, and source maps are the same for both $E_I$ and $F$, it follows that $X(E_I)$ and $X(F)$ are isomorphic as right $C(\bT)$-modules. 
On the other hand, since the range maps on $E_I$ and $F$ are different, the left action of $C(\bT)$ differs between the two modules. 
In particular, $X(E_I)$ is not isomorphic to $X(F) \cong X(E)$ as $C^*$-correspondences. 
We suspect that $X(E_I)$ is typically not isomorphic to $X(E)$ in general.
\end{example}

\subsection{In-splits and diagonal-preserving isomorphism}
\label{subsec:diag-iso}

The work of Eilers and Ruiz \cite[Theorem 3.2]{Eilers-Ruiz} shows that unital graph algebras of in-splits (out-splits in their terminology) 
are gauge-equivariantly $*$-isomorphic in a way that also preserves the canonical diagonal subalgebras.
In our general setting of Cuntz--Pimsner algebras, there is no obvious notion of canonical diagonal subalgebras. 
However, specialising to the setting of topological graphs, we can define such a diagonal. We prove in \cref{prop:diagonal_preserving_iso} that in-splits of correspondences over topological graphs gives a diagonal-preserving and gauge-equivariant $*$-isomorphism of the corresponding Cuntz--Pimsner algebras.

\begin{lemma}\label{lem:insplit_fock}
Let $(\phi, X_A)$ be a nondegenerate $C^*$-correspondence over $A$ and let $(\alpha,B,\psi)$ be an in-split. 
Then $(X \ox_{\alpha} B)^{\ox k} \cong X^{\ox k} \ox_{\alpha} B$ as right $B$-modules via the isomorphism
\[
x_1 \ox b_1 \ox \cdots \ox x_k \ox b_k \mapsto x_1 \ox \psi(b_1) x_2 \ox \cdots \psi(b_{k-1}) x_k \ox b_k.
\]
In particular, $\Fock(X \ox_{\alpha} B) \cong \Fock(X) \ox_{\alpha} B$.
\end{lemma}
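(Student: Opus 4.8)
The plan is to obtain the $k$-fold isomorphism from associativity of the internal tensor product, using the defining identity $\psi\circ\alpha=\phi$ of an in-split, and then assemble the degreewise isomorphisms into the Fock module statement. The key building block is the following: for any right Hilbert $A$-module $Y_A$ there is a right $B$-module isomorphism
\[
(Y\ox_\alpha B)\ox_{\psi\ox\Id_B}(X\ox_\alpha B)\;\cong\;(Y\ox_\phi X)\ox_\alpha B,\qquad y\ox b_1\ox x\ox b_2\;\longmapsto\;y\ox\psi(b_1)x\ox b_2.
\]
To see this, first observe that the map $b\ox\zeta\mapsto(\psi\ox\Id_B)(b)\zeta$ gives a right $B$-module isomorphism $B\ox_B(X\ox_\alpha B)\cong X\ox_\alpha B$: it preserves inner products because $\psi\ox\Id_B$ is a $*$-homomorphism, it is surjective because $\psi$ (hence $\psi\ox\Id_B$) is nondegenerate, and it transports the left $A$-action on the $B$-factor, which is $\alpha$, to $(\psi\ox\Id_B)\circ\alpha=\phi\ox\Id_B$. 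Two applications of associativity of the internal tensor product, sandwiching this identification, then yield the displayed isomorphism and the stated formula on elementary tensors. (Alternatively one may verify directly that the formula defines a well-defined isometric surjective right $B$-linear map, using the inner-product identity $(x_1\ox\alpha(a_1)\mid x_2\ox\alpha(a_2))_B=\alpha((x_1\cdot a_1\mid x_2\cdot a_2)_A)$ from the proof of \cref{lem:beta_is_well_defined} together with $\psi\circ\alpha=\phi$; I would present whichever argument is shorter.)

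With the building block in hand I would induct on $k$, the case $k=1$ being trivial. For the inductive step, using associativity of the iterated tensor power and then the building block with $Y=X^{\ox k}$,
\[
(X\ox_\alpha B)^{\ox(k+1)}=(X\ox_\alpha B)^{\ox k}\ox_B(X\ox_\alpha B)\cong(X^{\ox k}\ox_\alpha B)\ox_B(X\ox_\alpha B)\cong(X^{\ox k}\ox_\phi X)\ox_\alpha B=X^{\ox(k+1)}\ox_\alpha B,
\]
where the first isomorphism applies the inductive hypothesis to the first factor (which only involves its right $B$-module structure, namely right multiplication on the last $B$), and the final identification is associativity of the iterated tensor power. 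Tracking an elementary tensor through this chain reproduces exactly the formula $x_1\ox b_1\ox\cdots\ox x_k\ox b_k\mapsto x_1\ox\psi(b_1)x_2\ox\cdots\ox\psi(b_{k-1})x_k\ox b_k$.

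For the Fock module statement, recall that $\Fock(X\ox_\alpha B)=\bigoplus_{k\ge0}(X\ox_\alpha B)^{\ox k}$ and, since the internal tensor product commutes with direct sums, $\Fock(X)\ox_\alpha B\cong\bigoplus_{k\ge0}\bigl(X^{\ox k}\ox_\alpha B\bigr)$. For $k\ge1$ the summands are identified by the isomorphisms just constructed; for $k=0$ one has $X^{\ox0}\ox_\alpha B=A\ox_\alpha B$, and the map $a\ox b\mapsto\alpha(a)b$ is an isometry onto $(X\ox_\alpha B)^{\ox0}=B$, surjective precisely because the in-split $\alpha$ is nondegenerate. Assembling these summandwise isomorphisms gives the required right $B$-module isomorphism $\Fock(X\ox_\alpha B)\cong\Fock(X)\ox_\alpha B$. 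I expect no substantive obstacle here: the only real work is the careful bookkeeping of module structures through the associativity isomorphisms in the building block, and the one place where nondegeneracy of the in-split is genuinely used is the surjectivity of $B\ox_B(X\ox_\alpha B)\to X\ox_\alpha B$ and, equivalently, the degree-zero part of the Fock identification.
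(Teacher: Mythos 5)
Your proposal is correct and follows essentially the same route as the paper: the paper's proof also rests on the observation that $\psi$ is nondegenerate (since $\phi$ and $\alpha$ are), which collapses $X \ox_\alpha B \ox_\psi X \cong X \ox_{\psi\circ\alpha} X = X^{\ox 2}$, and then proceeds by induction exactly as in your building-block argument. Your extra bookkeeping (the inner-product check, the degree-zero identification $A \ox_\alpha B \cong B$ for the Fock statement) just makes explicit what the paper leaves implicit.
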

\begin{proof}
Since $\phi$ and $\alpha$ are nondegenerate, so is $\psi$. 
Hence, $X \ox_{\alpha} B \ox_\psi X \cong X \ox_{\psi \circ \alpha} X = X^{\ox 2}$ via the map $x_1 \ox b_1 \ox x_2 \mapsto x_1 \ox \psi(b_1)x_2$. 
The result now follows inductively.
\end{proof}

We now restrict to topological graphs and show that there is a notion of diagonal subalgebra.

\begin{lemma}
Let $E$ be a topological graph and let 
\[
\cC^1_E = \{ x \in C_c(E^1) \mid x \ge 0 \text{ and } s|_{\supp(x)} \text{ is injective}\}.
\]
Then
$
\cD^1_E \coloneqq \ol{\linspan}\{ \Theta_{x,x} \mid x \in \cC^1_E \}
$
is a commutative subalgebra of $\End_{C_0(E^0)}^0(X(E))$ which is isomorphic to $C_0(E^1)$. 
\end{lemma}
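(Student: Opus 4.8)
The plan is to realise $\cD^1_E$ as the algebra $C_0(E^1)$ acting on $X(E)$ by pointwise multiplication. For $g\in C_0(E^1)$ define an operator $M_g$ on $X(E)$ by $(M_g x)(e)=g(e)x(e)$ for $x\in C_c(E^1)$; the estimate
\[
\|M_g x\|^2=\sup_{v\in E^0}\sum_{s(e)=v}|g(e)|^2|x(e)|^2\le \|g\|_\infty^2\,\|x\|^2
\]
shows $\|M_g\|\le\|g\|_\infty$, so $M_g$ extends to $X(E)$. It is plainly $C_0(E^0)$-linear, and a short inner-product computation gives $M_g^*=M_{\bar g}$, so $M\colon C_0(E^1)\to\End_{C_0(E^0)}(X(E))$ is a $\ast$-homomorphism. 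Evaluating $M_g x$ against a bump function shows $M_g=0$ forces $g=0$, so $M$ is injective, hence isometric, and its image is a commutative closed $\ast$-subalgebra isomorphic to $C_0(E^1)$.

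The key identity is $\Theta_{x,x}=M_{x^2}$ for every $x\in\cC^1_E$. For $z\in C_c(E^1)$ and $e\in E^1$ one has
\[
\Theta_{x,x}(z)(e)=x(e)\sum_{s(f)=s(e)}\overline{x(f)}\,z(f);
\]
if $x(e)=0$ both sides vanish, and if $x(e)\neq 0$ then $e\in\supp(x)$, while any $f$ with $s(f)=s(e)$ and $x(f)\neq 0$ also lies in $\supp(x)$, so injectivity of $s|_{\supp(x)}$ forces $f=e$, whence $\Theta_{x,x}(z)(e)=x(e)^2 z(e)$. Thus $\Theta_{x,x}=M_{x^2}$ with $x^2\in C_c(E^1)$. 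Since each $\Theta_{x,x}$ is rank-one, $\cD^1_E\subseteq\End^0_{C_0(E^0)}(X(E))$; since $M$ is isometric, $\cD^1_E=M\big(\overline{\linspan}\{x^2:x\in\cC^1_E\}\big)$; and since $(xy)^2=x^2y^2$ with $xy\in\cC^1_E$, the set $\{\Theta_{x,x}:x\in\cC^1_E\}$ is already closed under multiplication, so $\cD^1_E$ is a commutative subalgebra.

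It remains to show $\overline{\linspan}\{x^2:x\in\cC^1_E\}=C_0(E^1)$. Using that $s$ is a local homeomorphism, take a locally finite cover $\{U_i\}$ of $E^1$ by precompact open sets with $s|_{U_i}$ injective and a subordinate partition of unity $\{\rho_i\}$ (as in \cref{ex:frame_for_topological_graph}). Given $f\in C_c(E^1)$ with $f\ge 0$, the sum $f=\sum_i\rho_i f$ is finite, each $\rho_i f\ge 0$ has $\supp(\rho_i f)\subseteq U_i$, so $(\rho_i f)^{1/2}\in\cC^1_E$ and $\rho_i f=\big((\rho_i f)^{1/2}\big)^2$. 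Hence every nonnegative function in $C_c(E^1)$—and, splitting into real/imaginary and positive/negative parts, every function in $C_c(E^1)$—lies in $\linspan\{x^2:x\in\cC^1_E\}$; as $C_c(E^1)$ is dense in $C_0(E^1)$, the claim follows, and therefore $\cD^1_E=M(C_0(E^1))\cong C_0(E^1)$.

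The main point requiring care is the bookkeeping in the last paragraph—ensuring the supports of the $\rho_i f$, and of their square roots, sit inside sets on which $s$ is injective so that these square roots genuinely lie in $\cC^1_E$. The identity $\Theta_{x,x}=M_{x^2}$, where the injectivity hypothesis in the definition of $\cC^1_E$ enters, is otherwise routine, as is the verification that $M$ is a well-defined isometric $\ast$-homomorphism.
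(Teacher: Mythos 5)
Your proof is correct and is essentially the paper's argument: both rest on the identity that $\Theta_{x,x}$ acts as multiplication by $x^{2}$ for $x\in\cC^1_E$, together with a locally finite cover on which $s$ is injective and a subordinate partition of unity to pass between arbitrary functions and elements of $\cC^1_E$. The only cosmetic difference is the direction of construction — you build the multiplication representation $M$ of $C_0(E^1)$ and prove density of the span of squares, whereas the paper defines the inverse map $\Psi(x)=\sum_i\Theta_{(\rho_i x)^{1/2},(\rho_i x)^{1/2}}$ directly — so the two proofs coincide in substance.
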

\begin{proof}
Since $E^1$ is paracompact and $s$ is locally injective we may  choose a locally finite open cover $\{U_i\}$ of $E^1$ such that $s|_{U_i}$ is injective. Let $\{\rho_i\}$ be a partition of unity subordinate to $\{U_i\}$. Fix a positive function $x \in C_0(E^1)$ and define
\[
\Psi(x) 
\coloneqq \sum_{i=1}^{\infty} \Theta_{(\rho_ix)^{1/2},(\rho_ix)^{1/2}} \in \cD^1_E.
\]
The sum converges as for $z \in C_c(E^1)$ and $e \in E^1$,
\begin{equation}\label{eq:mult_operator}
\Psi(x)z(e) = \sum_{i=1}^\infty (\rho_ix)^{1/2}(e) \sum_{s(f) = s(e)} (\rho_ix)^{1/2}(f)z(f) = \sum_{i=1}^{\infty} \rho_i(e) x(e) z(e) = x(e)z(e).
\end{equation}
In particular, $\Psi(x)$ acts as a multiplication operator. 
Moreover, \eqref{eq:mult_operator} implies that $\Psi(x)$ is independent of the choice of open cover and partition of unity. Since $\Psi(x+y) = \Psi(x)+\Psi(y)$, and $\Psi(xy) = \Psi(x)\Psi(y)$ for positive $x,y \in C_0(E^1)$ and positive elements span $C_0(E^1)$ we can linearly extend the formula $\Psi(x)$ to all $x \in C_0(E^1)$ to obtain a $*$-homomorphism $\Psi \colon x \mapsto \Psi(x)$ from $C_0(E^1)$ to $\cD_E^1$. 
Since $|\supp(x) \cap s^{-1}(v)| \le 1$ for all $x \in \cC^1_E$ and $v \in E^0$, it follows that
\begin{align*}
\|\Psi(x)\|^2 = \sup_{\|z\|=1} \sup_{v \in E^0} \sum_{s(e) = v} |\Psi(x)z(e)|^2 = \sup_{\|z\|=1} \sup_{v \in E^0} \sum_{s(e) = v} |x(e)z(e)|^2.
\end{align*}
Since $\sup_{\|z\|=1}  \sum_{s(e) = v} |x(e)z(e)|^2$ is the square of the operator norm of the multiplication operator $\Psi(x)$ restricted to $\ell^2(s^{-1}(v))$ it follows that $\|\Psi(x)\|^2 = \|x\|_{\infty}^2$ and so $\Psi$ is isometric. For surjectivity observe that for each $x \in \cC_E^1$, 
\[
\Theta_{x,x} z(e) = x^2(e)z(e) = \Psi(x^2)z(e)
\]
so $\Theta_{x,x} = \Psi(x^2)$. Since the $\Theta_{x,x}$ densely span $\cD_E^1$ surjectivity of $\Psi$ follows.
\end{proof}

\begin{defn}
Let $E$ be a topological graph. We call $\cD^1_E \subseteq \End_{C_0(E^0)}^0(X(E))$ the \emph{diagonal} of $\End_{C_0(E^0)}^0(X(E))$.  For $k \ge 1$ define
\begin{align*}
\cC^k_E &\coloneqq \{ x \in C_c(E^k) \mid x \ge 0 \text{ and } s|_{\supp(x)} \text{ is injective}\} \quad \text{ and }\\
\cD^k_E &\coloneqq \ol{\linspan}\{\Theta_{x,x} \mid x \in \cC^k_E\} \cong C_0(E^k).
\end{align*}
Let $\cD^0_E = C_0(E^0)$. Define the \emph{diagonal} of $\cO_{X(E)}$ to be the $C^*$-subalgebra
\begin{align*}
\cD_{E}
&\coloneqq \sum_{k = 0}^{\infty} \iota^{(k)}_{X(E)} (\cD_E^k)
= \ol{\linspan}\{\iota^{(k)}_{X(E)} (\Theta_{x,x}) \mid x \in \cC^k_E, k \ge 0\}\\
&= \ol{\linspan}\{\iota^{(k)}_{X(E)} (x) \iota^{(k)}_{X(E)}(x)^* \mid x \in \Cc^k_E, k \ge 0\},		
\end{align*}
where the terms of the sum are not necessarily disjoint. 
\end{defn}

\begin{rmk}
For each $k \ge 0$, $\End_{C_0(E^0)}^0(X(E^k))$ is isomorphic to the groupoid $C^*$-algebra of the amenable \'etale groupoid $\cR_k \coloneqq \{ (x,y) \in E^k \times E^k \mid s(x) = s(y)\}$. 
The isomorphism $\Phi \colon \End_{C_0(E^0)}^0(X(E^k)) \to C^*(\cR_k)$ satisfies $\Phi(\Theta_{x,y})(e,f) = x(e)\ol{y(f)}$ for $x,y \in X(E)$, with the inverse satisfying
\[
\Phi^{-1}(\xi)x(e) = \sum_{s(f)=s(e)} \xi(e,f) x(f)
\]
for $\xi \in C_c(\cR_k)$ and $x \in C_c(E^1)$. 
Details can be found in \cite[Proposition 3.2.14]{MundeyPhD}. 
The map $\Phi$ takes $\cD_k$ onto the canonical diagonal subalgebra of $C^*(\Rr_k)$ consisting of $C_0$-functions on the unit space. 
Moreover, $\cD_E$ is the canonical diagonal of the Deaconu--Renault groupoid associated to $E$ \cite[Proposition 3.3.16]{MundeyPhD}.
\end{rmk}

Recall from \cref{prop:graph_insplit_gives_correspondence_insplit} that if $(\alpha, E_I^0, \psi)$ is an in-split of a topological graph $E$, then $(\alpha^*, C_0(E_I)^0, \psi^*)$ is an in-split of the graph correspondence $(\phi,X(E))$.
Since \cref{prop:graph_insplit_gives_correspondence_insplit} implies $X(E_I) \cong X(E) \ox_{\alpha^*} C_0(E_I^0)$,
we may consider the $*$-isomorphism $\alpha^* \times \beta$ of \cref{thm:insplits_give_isomorpisms} as a map $\alpha^* \times \beta \colon \cO_{X(E)} \to \cO_{X(E_I)}$. 
We will show that $\alpha^* \times \beta$ also preserves diagonals in the sense that $(\alpha^* \times \beta) (\cD_E) = \cD_{E_I}$.

To this end, let $B = C_0(E^0_I)$. It follows from \cref{lem:insplit_fock} that there are $*$-isomorphisms
\[
X(E)^{\ox k} \ox_{\alpha^*} B \cong  (X(E) \ox_{\alpha^*} B)^{\ox k} \cong X(E_I)^{\ox k} \cong X(E_I^k)
\]
for all $k \ge 1$. 
Using \cref{lem:insplit_paths} to identify $E_I^k$ with $E^k \times_{s,\alpha} E_I^0$ we define
\[
(x_1 \ox \cdots \ox x_k \ox b)(e_1,\ldots,e_k, v) \coloneqq x_1(e_1)\cdots x_k(e_k) b(v),
\]
for all $x_1,\dots,x_k\in X(E)$, $b\in B$, and $(e_1,\dots,e_k, v)\in E^k\times_{s,\alpha} E_I^0$.

\begin{lemma}\label{lem:diagonal_compatibility} Let $x \in C_c(E^k)$ and $b \in C_0(E^0_I)$. Then
$x \ox b \in \Cc^k_{E_I}$ if and only if $x \in \Cc^k_E$.
\end{lemma}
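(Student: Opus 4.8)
The plan is to work in the concrete model $E_I^k\cong E^k\times_{s,\alpha}E_I^0$ furnished by \cref{lem:insplit_paths}. Under this identification (together with \cref{lem:insplit_fock} and the identifications fixed just before the statement) the element $x\ox b$ becomes the function $f$ on $E_I^k$ given by $f(p,v)=x(p)b(v)$, the source map is $s_I(p,v)=v$, and membership $(p,v)\in E_I^k$ records that $s(p)=\alpha(v)$. First I would note that $x\ox b$ lies in $C_c(E_I^k)$ automatically: $\supp f\subseteq(\supp x)\times\alpha^{-1}(s(\supp x))$, and the right-hand side is compact because $\supp x$ is compact, $s$ is continuous and $\alpha$ is proper. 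Next, since $\alpha$ is surjective every $p\in E^k$ occurs as the first coordinate of some point of $E_I^k$, and a direct check shows $x\ox b\ge 0$ exactly when $x\ge 0$ and $b\ge 0$ on $\alpha^{-1}(s(\supp x))$; in particular $x\ge 0$ and $b\ge 0$ give $x\ox b\ge 0$, which is the case relevant to the diagonal. So the claim reduces to matching the two local-injectivity conditions.

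The implication ``$x\in\Cc^k_E\Rightarrow x\ox b\in\Cc^k_{E_I}$'' is easy: writing $\alpha^k\colon E_I^k\to E^k$ for the projection onto the first factor (continuous, proper and surjective by the argument of \cref{lem:extending_alpha_to_edges}), we have $f=(x\circ\alpha^k)\cdot(b\circ s_I)$, hence $\supp f\subseteq(\alpha^k)^{-1}(\supp x)$. Thus if $(p_1,v),(p_2,v)\in\supp f$ then $p_1,p_2\in\supp x$ with $s(p_1)=\alpha(v)=s(p_2)$, so injectivity of $s|_{\supp x}$ forces $p_1=p_2$, and $s_I|_{\supp f}$ is injective.

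For the converse I would argue by contraposition. If $s|_{\supp x}$ is not injective, pick $p_1\ne p_2$ in $\supp x$ with $s(p_1)=s(p_2)=w$, use surjectivity of $\alpha$ to choose $v$ with $\alpha(v)=w$, so that $(p_1,v),(p_2,v)\in E_I^k$ with $s_I(p_1,v)=s_I(p_2,v)$; it then remains only to check that both points lie in $\supp f$, which contradicts $s_I|_{\supp f}$ being injective. This last point is the crux: it is precisely the statement that the two obvious one-sided inclusions in $\supp f\subseteq(\alpha^k)^{-1}(\supp x)\cap s_I^{-1}(\supp b)$ can be upgraded to equalities. Here the in-split structure does the work — $s_I$ is the pullback of the local homeomorphism $s$ along $\alpha$, hence open, which yields $\supp(b\circ s_I)=s_I^{-1}(\supp b)$, while surjectivity of $\alpha$ and the compatibility $\alpha\circ\psi=r$ allow one to lift nets approximating the $p_i$ in $\supp x$ back into the fibre product $E_I^k$. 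I expect this support identification to be the main obstacle; once it is in hand, the remainder is bookkeeping.
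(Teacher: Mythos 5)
Your forward implication is fine, and is essentially the paper's argument (if anything slightly more careful, since you work with closed supports via $\supp(x\ox b)\subseteq(\alpha^k)^{-1}(\supp x)$ together with the fibre-product constraint $s(p)=\alpha(v)$). The genuine gap is in the converse, exactly at the step you yourself flag as the crux. The support identification you need, $\supp(x\ox b)=(\alpha^k)^{-1}(\supp x)\cap s_I^{-1}(\supp b)$, is false in general: openness of $s_I$ does give $\supp(b\circ s_I)=s_I^{-1}(\supp b)$, but the support of a \emph{product} of functions is usually strictly smaller than the intersection of the supports, and no lifting of nets can manufacture points where $x$ and $b\circ s_I$ are simultaneously nonzero. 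Concretely, take $E^0=(0,1)$, $E^1=(0,1)\sqcup(0,1)$ with $r=s=\mathrm{id}$ on each copy, and the identity in-split $I=(\mathrm{id}_{E^0},E^0,r)$, so that $E^1_I\cong E^1$, $s_I\cong s$, and $x\ox b$ becomes the function $e\mapsto x(e)b(s(e))$ (this is \cref{lem:insplit_paths} with $k=1$). Choose $x\ge 0$ nonzero exactly on $(1/4,1/2)$ in the first copy and on $(1/2,3/4)$ in the second, and $b\ge 0$ nonzero exactly on $(0,1/2)$. Then the two points of $\supp x$ lying over $1/2$ show $s|_{\supp x}$ is not injective, yet $x\,(b\circ s)$ vanishes identically on the second copy and $s_I$ is injective on its support, which is contained in the first copy. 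So, read with closed supports and arbitrary $b$, the implication your contrapositive is trying to establish is actually false, and the two points $p_1\ne p_2$ you produce simply do not lift into $\supp(x\ox b)$; the claimed equality of supports fails strictly here.

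The paper's own proof avoids all of this because it never passes to closures: it is a pointwise check on the sets where the functions are nonzero. If $x(e),x(e')\ne 0$ with $s(e)=s(e')$, then for any $v$ in the fibre $\alpha^{-1}(s(e))$ with $b(v)\ne 0$ both $(x\ox b)(e,v)$ and $(x\ox b)(e',v)$ are nonzero, so injectivity of $s_I$ on the nonvanishing set of $x\ox b$ forces $e=e'$; the forward direction is the same computation read in reverse, and no openness, nets, or support identities enter. (Both you and the paper pass silently over the degenerate case where $b$ vanishes on the whole fibre over the offending vertex, e.g.\ $b=0$, where the ``only if'' direction genuinely fails; so to repair your argument you should both drop the closed-support formulation in favour of the nonvanishing-set reading and note this nondegeneracy caveat, rather than try to prove the support equality.)
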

\begin{proof}
Fix $x \in \Cc_E^k$ and $b \in C_0(E^0_I)$. If $(x \ox b)(e,v) = x(e)b(v)$ and $(x\ox b)(e',v) = x(e')b(v)$ are nonzero for $e,e' \in E^k$ with $s(e) = s(e') = \alpha(v)$, then $x(e)$ and $x(e')$ are nonzero, so by assumption $e = e'$ and hence $(x \ox b) \in \Cc_{E_I}^k$. Conversely, suppose $(x \ox b) \in \Cc_{E_I}^k$ and $x(e)$ and $x(e')$ are nonzero for some $e,e' \in E^k$ with $s(e) = s(e')$. Then $(x \ox b)(e,v)$ and $(x \ox b)(e',v)$ are both nonzero as soon as one is nonzero. Hence, $e = e'$ and so $x \in \Cc_E^k$. 
\end{proof}
\begin{prop}\label{prop:diagonal_preserving_iso}
Let $E$ be a topological graph and let $I = (\alpha,E_0^I,\psi)$ be an in-split of $E$.
Then the Cuntz--Pimsner algebras $\cO_{X(E)}$ and $\cO_{X(E_I)}$ are gauge-equivariantly $*$-isomorphic
in a way that also preserves the diagonal subalgebras.
\end{prop}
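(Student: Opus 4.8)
The plan is to verify that the gauge-equivariant $*$-isomorphism
$\Phi := \alpha^*\times\beta\colon\cO_{X(E)}\to\cO_{X(E_I)}$
furnished by \cref{thm:insplits_give_isomorpisms} (via the identification $X(E_I)\cong X(E)\ox_{\alpha^*}C_0(E_I^0)$ of \cref{prop:graph_insplit_gives_correspondence_insplit}) also carries $\cD_E$ onto $\cD_{E_I}$; gauge-equivariance is already part of \cref{thm:insplits_give_isomorpisms}, so only the equality $\Phi(\cD_E)=\cD_{E_I}$ remains. First I would record the structural identities driving everything. Using \cref{lem:insplit_paths,lem:insplit_fock} to identify $X(E_I)^{\ox k}\cong X(E^k)\ox_{\alpha^*}C_0(E_I^0)\cong X(E_I^k)$, and writing $\alpha^{(k)}\colon E_I^k\to E^k$ for the projection onto the first factor (proper and surjective, by \cref{lem:extending_alpha_to_edges} for $k=1$ and \cref{lem:insplit_paths} in general) and $\tilde\alpha^{(k)}\colon X(E^k)\to X(E_I^k)$, $\zeta\mapsto\zeta\circ\alpha^{(k)}$, for the induced pullback, one checks directly from \cref{prop:graph_insplit_gives_correspondence_insplit} that $\beta$ becomes $\tilde\alpha^{(1)}$, hence by multiplicativity of the $\iota^{\ox k}$-maps
\[
\Phi\circ\iota_{X(E)}^{\ox k}=\iota_{X(E_I)}^{\ox k}\circ\tilde\alpha^{(k)},\qquad \Phi\circ\iota_{C_0(E^0)}=\iota_{C_0(E_I^0)}\circ\alpha^*.
\]

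For the inclusion $\Phi(\cD_E)\subseteq\cD_{E_I}$ I would evaluate $\Phi$ on the generators: for $x\in\cC^k_E$ we get
\[
\Phi\big(\iota_{X(E)}^{(k)}(\Theta_{x,x})\big)=\iota_{X(E_I)}^{(k)}\big(\Theta_{\tilde\alpha^{(k)}(x),\tilde\alpha^{(k)}(x)}\big),
\]
and because $\alpha^{(k)}$ is proper and $x$ has $s$-injective support, a short computation (the same one underlying \cref{lem:diagonal_compatibility}) shows $\Theta_{\tilde\alpha^{(k)}(x),\tilde\alpha^{(k)}(x)}$ is the multiplication operator associated to $|x|^2\circ\alpha^{(k)}\in C_0(E_I^k)$, so it lies in $\cD^k_{E_I}$. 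Combined with $\Phi(\iota_{C_0(E^0)}(C_0(E^0)))\subseteq\cD^0_{E_I}$ and continuity of $\Phi$, this gives the inclusion.

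The reverse inclusion $\cD_{E_I}\subseteq\Phi(\cD_E)$ is where the work lies, and I would assemble it from three observations. (i) $\iota_{C_0(E_I^0)}(C_0(E_I^0))\subseteq\Phi(\cD_E)$: by the surjectivity part of \cref{thm:insplits_give_isomorpisms} and \cref{lem:b_decomposition}, for $b=\alpha^*(a)+k$ with $k\in J_{\psi^*}$ one has $\iota_{C_0(E_I^0)}(b)=\Phi(\iota_{C_0(E^0)}(a))+\Phi(\iota^{(1)}_{X(E)}(\psi^*(k)))$, and $\psi^*(k)$ is a compact multiplication operator, hence lies in $\cD^1_E$, so both terms lie in $\Phi(\cD_E)$. (ii) \emph{Normalizer-invariance of $\cD_E$}: if $y\in\cC^k_E$ and $d\in\cD_E$ then $\iota_{X(E)}^{\ox k}(y)\,d\,\iota_{X(E)}^{\ox k}(y)^*\in\cD_E$; on a generator $d=\iota_{X(E)}^{(m)}(\Theta_{z,z})$, $z\in\cC^m_E$, this product equals $\iota_{X(E)}^{(k+m)}(\Theta_{y\ox z,\,y\ox z})$, and $y\ox z\in\cC^{k+m}_E$ because the $s$-injectivity of $z$ on its support, together with composability of paths, forces agreement of the first $k$ edges via the $s$-injectivity of $y$ (for $m=0$ one obtains a diagonal multiplication operator instead). (iii) A Stone--Weierstrass reduction: the operators $\Theta_{\xi,\xi}$ with $\xi=\tilde\alpha^{(k)}(y)\cdot\ell^{1/2}$, $y\in\cC^k_E$ and $\ell\in C_0(E_I^0)$, $\ell\ge0$, densely span $\cD^k_{E_I}$, since the functions $(y^2\circ\alpha^{(k)})\cdot(\ell\circ s)$ densely span $C_0(E_I^k)$. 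For such $\xi$ one computes, using the structural identities and $\iota_{X(E_I)}^{\ox k}(\eta\cdot a)=\iota_{X(E_I)}^{\ox k}(\eta)\iota_{C_0(E_I^0)}(a)$,
\[
\iota_{X(E_I)}^{(k)}(\Theta_{\xi,\xi})=\Phi\big(\iota_{X(E)}^{\ox k}(y)\big)\,\iota_{C_0(E_I^0)}(\ell)\,\Phi\big(\iota_{X(E)}^{\ox k}(y)\big)^*=\Phi\big(\iota_{X(E)}^{\ox k}(y)\,d\,\iota_{X(E)}^{\ox k}(y)^*\big),
\]
where $d\in\cD_E$ is any element with $\Phi(d)=\iota_{C_0(E_I^0)}(\ell)$, as furnished by (i); by (ii) the argument of the last $\Phi$ lies in $\cD_E$. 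Taking closed linear spans and summing over $k$ yields $\cD_{E_I}\subseteq\Phi(\cD_E)$, hence $\Phi(\cD_E)=\cD_{E_I}$.

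I expect the main obstacle to be this reverse inclusion: $\cD_{E_I}$ contains, at each level $k$, diagonal directions depending genuinely on the new $E_I^0$-coordinate rather than only on the underlying $E^k$-path, and reaching them requires both the normalizer-invariance in (ii)—which pushes the problem one level up—and the splitting in (i) that absorbs the $J_{\psi^*}$-part of $C_0(E_I^0)$. The remaining delicate-but-routine point will be keeping the identifications $X(E_I)^{\ox k}\cong X(E_I^k)$ consistent with the maps $\iota^{\ox k}$ and $\iota^{(k)}$.
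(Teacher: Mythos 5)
Your proposal is correct and follows essentially the same route as the paper: reduce to showing $\Phi(\cD_E)=\cD_{E_I}$, obtain the forward inclusion by transporting the generators $\Theta_{x,x}$ (you use the pullback description of $\beta^{\ox k}$ where the paper instead expands $\Theta_{x,x}\ox\Id_B$ over a frame and invokes \cref{lem:diagonal_compatibility}), and obtain the reverse inclusion by reducing, through a spanning family for $\cD^k_{E_I}$, to the statement $\iota_{C_0(E^0_I)}(C_0(E^0_I))\subseteq\Phi(\cD_E)$ proved via \cref{lem:b_decomposition} and covariance. The only place your sketch is thinner than the paper is the assertion that the compact multiplication operator $\psi^*(k)$ lies in $\cD^1_E$ --- precisely where the paper inserts the frame computation $\psi^*(k)=\sum_i\Theta_{\psi^*(k)^{1/2}x_i,\,\psi^*(k)^{1/2}x_i}$ --- while conversely your explicit normalizer-invariance step (ii), showing $\iota_{X(E)}^{\ox k}(y)\,d\,\iota_{X(E)}^{\ox k}(y)^*\in\cD_E$, makes explicit a point the paper leaves implicit.
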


\begin{proof}
Since $\alpha^* \times \beta$ is injective, it is enough to show that $(\alpha^* \times \beta)(\cD_E) = \cD_{E_I}$. 	
Let $a_j = (u_j - u_{j-1})^{1/2}$ be as in the statement of \cref{lem:beta_is_well_defined}, and recall that ($\alpha^*(a_j))_j$ is a frame for $B$ as a right Hilbert $B$-module. Let $\beta^{(k)}$ denote the map $(\beta^k)^{(1)}  \colon \End^0_{A}(X(E)^{\ox k}) \to \End^0_B(X(E_I)^{\ox k})$. 
Given $x \in \cC_{E}^k$ we may apply \eqref{eq:beta1_is_tensor_id} to $\Theta_{x,x} \in \End_A^0(X^{\ox k})$ to see that
\[
\beta^{(k)} (\Theta_{x,x}) = \Theta_{x,x} \ox \Id_B = \sum_{i=1}^{\infty} \Theta_{x \ox \alpha^*(a_i), x \ox \alpha^*(a_i)}.
\] 
It follows from \cref{lem:diagonal_compatibility} that $x \ox \alpha^*(a_i) \in \cC_{E_I}^k$ so $\beta^{(k)} (\Theta_{x,x}) \in \cD_{E_I}^k$. Consequently,
\begin{align*}
(\alpha^* \times \beta) \circ \iota_{X(E)}^{(k)} (\Theta_{x,x}) = \iota_{X(E_I)}^{(k)} \circ \beta^{(k)} (\Theta_{x,x}) \in \cD_{E_I}
\end{align*}
and so $(\alpha^* \times \beta)(\cD_E) \subseteq \cD_{E_I}$. 

For surjectivity, first observe that since $X(E_I)^{\ox k} \cong X(E)^{\ox k} \ox_{\alpha^*} B$ is densely spanned by the set $\{x \ox b \mid x \in X(E)^{\ox k},\, b \in B\}$, 
and \cref{lem:diagonal_compatibility} states that $x \ox b \in \Cc^k_{E_I}$ if and only if $x \in \Cc^k_{E}$, so 
\[
\cD_{E_I}^k = \ol{\linspan}\{\Theta_{x \ox b, x \ox b} \mid x \in \Cc^k_E,\, b \in C_0(E^0_I)\}.
\]

Observe that for $x \ox b \in \Cc_{E_I}^k$,
\begin{align*}
\iota_{X \ox_\alpha B}^{(k)} (\Theta_{x \ox b,x \ox b}) = \iota_{X \ox_{\alpha} B}^{k}(x \ox b)\iota_{X \ox_{\alpha} B}^{k}(x \ox b)^*
&=( \alpha \times \beta)(\iota_{X}^k(x)) \iota_{B} (bb^*) ( \alpha \times \beta)(\iota_{X}^k(x))^*,
\end{align*}
so it suffices to show that $\iota_{B}(b) \in (\alpha \times \beta)(\cD_E)$ for each $b \in B$.

Fix $b \in B$ and use \cref{lem:b_decomposition} to write $b = \alpha^*(a) + j$ for some $a \in A$ and $j \in J_{\psi}$. 
We have $\iota_B(\alpha^*(a)) = (\alpha^* \times \beta)(\iota_A(a)) \in (\alpha^* \times \beta)(\cD_E)$. 
On the other hand, when $j \ge 0$,
\[
\psi^*(j) = 	\psi^*(j)^{1/2} \sum_{i=1}^{\infty}\Theta_{x_i,x_i} \psi^*(j)^{1/2} = \sum_{i = 1}^{\infty} \Theta_{\psi^*(j)^{1/2} x_i, \psi^*(j)^{1/2} x_i}.
\]
Since $(\psi^*(j)^{1/2} x_i) (e) = j(\psi(e))^{1/2}x_i(e)$ it follows that $s$ restricted to the support of $ \psi^*(j)^{1/2} x_i$ is injective. Hence, $\psi(j) \in \cD_E^1$ and by linearity this is also true for general $j \in J_{\psi}$. 
Covariance of $(\iota_B,\iota_{X \ox_\alpha B})$ and \eqref{eq:beta1_is_tensor_id} imply that
\begin{align*}
    \iota_B (j) 
     =\sum_i \iota_{X \ox_{\alpha} B}^{(1)}  (\Theta_{\psi^*(j)^{1/2} x_i, \psi^*(j)^{1/2} x_i} \ox \Id_B)
    &= \sum_i \iota_{X \ox_{\alpha} B}^{(1)} \circ \beta^{(1)} (\Theta_{\psi^*(j)^{1/2} x_i, \psi^*(j)^{1/2} x_i} )\\
    &= \sum_i (\alpha^* \times \beta) \circ \iota_X^{(1)}(\Theta_{\psi^*(j)^{1/2} x_i, \psi^*(j)^{1/2} x_i})
\end{align*}
belongs to $(\alpha^* \times \beta)(\cD_{E})$. Consequently, $\iota_B(b) \in (\alpha^* \times \beta)(\cD_E)$ and so $(\alpha^* \times \beta)(\cD_E) = \cD_{E_I}$. 
\end{proof}

\begin{example}
The $*$-isomorphism between $\cO_{X(E)}$ and $\cO_{X(E_I)}$ of \cref{eg:circles-CP} is also diagonal preserving.
\end{example}

\begin{rmk}
\cref{thm:sse-equivariant-morita-equivalence} and \cref{prop:diagonal_preserving_iso} imply that a diagonal-preserving, gauge-equivariant $*$-isomorphism 
between the Cuntz--Pimsner algebras of topological graphs is \emph{not} sufficient to recover the original $C^*$-correspondence up to isomorphism. 
An analogous result for Cuntz--Pimsner algebras of graph correspondences states that diagonal-preserving, gauge-equivariant isomorphisms are \emph{not} sufficient to recover the graph up to conjugacy.

The final section of \cite{Brix-Carlsen} exhibits an example of a pair of finite and strongly connected graphs that are not conjugate
but whose graph $C^*$-algebras admit a $*$-isomorphism that is both gauge-equivariant and diagonal-preserving.
The main result of \cite{ABCE} uses groupoid techniques to recover a topological graph up to conjugacy using $*$-isomorphisms that intertwine a whole family of gauge actions. 
For general Cuntz--Pimsner algebras there is no obvious such family of gauge actions.

A recent preprint \cite{Frausino-Ng-Sims} explains how to recover the graph correspondence of a compact topological graph 
from its Toeplitz algebra, its gauge action, and the commutative algebra of functions on the vertex space.
\end{rmk}

\section{Out-splits}
\label{sec:out-damn-split}

In this section, we consider the dual notion of an out-split.
The non-commutative version applied to Cuntz--Pimsner algebras is not as fruitful as non-commutative in-splits. The inputs are more restrictive and the outputs less exciting, but we include this section for completeness.

For a graph, we will see that an out-split corresponds to a factorisation of the source map.
We use the notation of Bates and Pask~\cite{Bates-Pask} as well as Eilers and Ruiz~\cite{Eilers-Ruiz},
but we warn the reader that our graph conventions follow Raeburn's monograph~\cite{Raeburn} and so are \emph{opposite} to the convention used in those papers.

\subsection{Out-splits for directed graphs}
Let $E = (E^1, E^0, r, s)$ be a countable discrete directed graph.
We recall the notion of an out-split from~\cite[Section 3]{Bates-Pask}.
Fix a regular $w\in E^0$ (i.e. $0 < |s^{-1}(w)| < \infty$),
and let $\{\Pp^i\}_{i=1}^n$ be a partition of $s^{-1}(w)$ into finitely many (possibly empty) sets. 

The \emph{out-split graph of $E$ associated to $\Pp$} is the graph $E_r(\Pp)$ given as
\begin{align*}
E_s(\Pp)^0 &= \{ v_1 : v\in E^0 \} \cup \{w_1,\ldots,w_n \}\\
E_s(\Pp)^1 &= \{ e_1 : e\in E^1, r(e) \neq w \} \cup \{e_1,\ldots,e_n : e \in E^1, r(e) = w\},  \\
r_\Pp(e_j) &= r(e)_j,  \\
s_\Pp(e_j) &= 
\begin{cases}
s(e)_1 & \textrm{if } s(e) \neq w, \\
w_i & \textrm{if } s(e) = w \textrm{ and } e\in \Pp^i,
\end{cases}
\end{align*}
for all $e_j\in E_s^1(\Pp)$.

\begin{example}
Consider the graphs
\[
\begin{tikzpicture}
[baseline=-0.25ex,
vertex/.style={
circle,
fill=black,
inner sep=1.5pt
},
edges/.style={
-stealth,
shorten >= 3pt,
shorten <= 3pt
},
scale =1]

\node[vertex] (a) at (0,0) {};%
\node[vertex] (b) at (2,0) {};%

\node[anchor= east] at (a) {\scriptsize{$w$}};
\node[anchor= west] at (b) {\scriptsize{$v$}};

\draw[edges,blue] (b.west) -- node[anchor=south, inner sep = 2pt]{\scriptsize{$g$}} (a.east);

\draw[edges,purple] (b.west) to [out = 225, in = -45] node[anchor=south, inner sep = 2pt]{\scriptsize{$h$}} (a.east);

\draw[edges] (a.east) to [out = 45, in = -225] node[anchor=south, inner sep = 2pt]{\scriptsize{$f$}} (b.west);

\draw[edges,red] (a.north) to [out = 45, in = -225, min distance = 40pt, looseness = 10] node[anchor=south, inner sep = 2pt]{\scriptsize{$e$}} (a.north);
\end{tikzpicture}
\qquad
\text{and}
\qquad 
\begin{tikzpicture}
[baseline=-5ex,
vertex/.style={
circle,
fill=black,
inner sep=1.5pt
},
edges/.style={
-stealth,
shorten >= 3pt,
shorten <= 3pt
},
scale =1]

\node[vertex] (a) at (0,0) {};%
\node[vertex] (b) at (2,0) {};%
\node[vertex] (c) at (0,-2) {};%

\node[anchor= east] at (a) {\scriptsize{$w_1$}};
\node[anchor= west] at (b) {\scriptsize{$v_1$}};
\node[anchor= east] at (c) {\scriptsize{$w_2$}};

\draw[edges,blue] (b.south west) -- node[anchor=north west, inner sep = 1pt]{\scriptsize{$g_2$}} (c.north east);

\draw[edges,purple] (b.west) to node[anchor=south, inner sep = 2pt]{\scriptsize{$h_1$}} (a.east);
\draw[edges,blue] (b.west) to [out = -225,in = 45] node[anchor=south, inner sep = 2pt]{\scriptsize{$g_1$}} (a.east);

\draw[edges,purple] (b.south) to [out = 250, in = 20] node[anchor=north west, inner sep = 0.5pt]{\scriptsize{$h_2$}} (c.east);

\draw[edges] (c.north) to [out = 70, in = 200] node[anchor=north west, inner sep = 0.5pt]{\scriptsize{$f_1$}} (b.west);

\draw[edges,red] (a.north) to [out = 45, in = -225, min distance = 40pt, looseness = 10] node[anchor=south, inner sep = 2pt]{\scriptsize{$e_1$}} (a.north);
\draw[edges,red] (a.south) to node[anchor=east, inner sep = 2pt]{\scriptsize{$e_2$}} (c.north);
\end{tikzpicture}.
\]
The incoming edges to $w$ are coloured for clarity.
Then $s^{-1}(w) = \{ e, f\}$ and we consider the partition $\Pp_1 = \{e\}$ and $\Pp_2 = \{f\}$.
The out-split graph---with respect to this partition---is the right-most graph above.

Note that the loop $e$ is both an incoming and an outgoing edge.
The adjacency matrices of the graphs are 
\[
\textsf{A} = 
\begin{pmatrix}
1 & 1 \\
2 & 0
\end{pmatrix} \qquad \textrm{and} \qquad
\textsf{C} = 
\begin{pmatrix}
1 & 1 & 0 \\
0 & 0 & 1 \\
2 & 2 & 0
\end{pmatrix}
\]
and the rectangular matrices 
\[
\textsf{R} = 
\begin{pmatrix}
1 & 0 \\
0 & 1 \\
2 & 0
\end{pmatrix} \quad \textrm{and} \quad
\textsf{S} = 
\begin{pmatrix}
1 & 1 & 0 \\
0 & 0 & 1
\end{pmatrix}
\]
satisfy $\mathsf{C = SR}$ and $\mathsf{RS = A}$.
Therefore, $\textsf{A}$ and $\textsf{C}$ are (elementary) strong shift equivalent. 
Any out-split induces a strong shift equivalence, cf \cite[Chapter 7]{Lind-Marcus}. 
\end{example}

The out-split at $w$ can be summarised as two pieces of information:
there is a finite-to-one surjection $ \alpha \colon E_s^0(\Pp) \to E^0$ given by 
$
\alpha(v_j) = v,
$
for all $v_j\in E_s^0(\Pp)$,
and a surjection $\psi\colon E^1 \to E_s^0$ given by
\[
\psi(e) =
\begin{cases}
s(e)_1 & \textrm{if } s(e) \neq w, \\
w_i  & \textrm{if } s(e) = w, e\in \Pp^i,
\end{cases}
\]
for all $e\in E^1$.
Observe that $s = \alpha \circ \psi$, so we interpret an out-split as a factorisation of the source map
(in contrast to an in-split which we saw was a factorisation of the range map).

We may now form the graph  $(E_s^0(\Pp), E_s^0(\Pp)\times_{\alpha, r} E^1, r, s)$
where the edge set is the fibred product 
\[
E_s^0(\Pp)\times_{\alpha, r} E^1 = \{ (v_j, e)\in E_s^0(\Pp)\times E^1 : v = r(e) \}
\]
and $r(v_j,e) = v_j$ and $s(v_j,e) = \psi(e)$ for all $(v^j,e)\in E_s^0(\Pp)\times_{\alpha,r}E^1$.
This is graph isomorphic to the out-split graph $E_s(\Pp)$ via the map $e_j \mapsto (v_j, e)$ for all $e^j\in E_s^1(\Pp)$.

We give a definition of out-splits for regular topological graphs, which includes regular directed graphs. 

\begin{defn}
\label{defn:topological_graph_outsplit}
An \emph{out-split} (or \emph{source-split}) of a topological graph $E = (E^0,E^1,r,s)$ is a triple $\bO = (\alpha,Y,\psi)$ consisting of
\begin{enumerate}
\item a locally compact Hausdorff space $Y$,
\item a proper surjective local homeomorphism $\alpha \colon Y \to E^0$, and
\item a proper surjective local homeomorphism $\psi \colon E^1 \to Y$, 
\end{enumerate}
such that  $\alpha \circ \psi = s$.
\end{defn}

\begin{rmk}
The continuity assumptions of an out-split $\bO = (\alpha, E^0_\bO,\psi)$ are automatic for regular directed graphs.
\end{rmk}

We associate a new topological graph to an out-split.

\begin{lemma} 
Let $E = (E^0,E^1,r,s)$ be a regular topological graph and let $\bO = (\alpha, Y,\psi)$ be an out-split of $E$.
Then $E_\bO = (E_\bO^0, E_\bO^1,r_\bO,s_\bO)$ is a regular topological graph, where 
\begin{enumerate}
\item $E^0_\bO \coloneqq Y$;
\item $E^1_\bO \coloneqq E^0_\bO \times_{\alpha,r} E^1 = \{ (v,e) \in E^0_\bO \times E^1 \mid  \alpha(v) = r(e)\}$ equipped with the subspace topology of the product $E^0_\bO \times E^1$; and
\item $r_\bO(v,e) = v$ and $s_\bO(v,e) = \psi(e)$, for all $e\in E^1$ and $v\in E_\bO^0$.
\end{enumerate}
\end{lemma}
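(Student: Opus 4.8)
The plan is to follow the template of the proof of the analogous in-split lemma: verify the point-set conditions in turn, then handle regularity separately, with regularity of $E$ entering only in the last part. For the point-set structure, I would first observe that $E^1_\bO = \{(v,e) : \alpha(v) = r(e)\}$ is the preimage of the diagonal of $E^0 \times E^0$ under the continuous map $(v,e)\mapsto(\alpha(v),r(e))$, hence closed in $E^0_\bO \times E^1$ because $E^0$ is Hausdorff, and therefore locally compact Hausdorff. Second countability of $E^0_\bO = Y$, which the definition of out-split does not impose directly, follows from $\alpha$ being a \emph{proper} local homeomorphism onto the second countable (hence $\sigma$-compact) space $E^0$: then $Y$ is $\sigma$-compact, so Lindel\"of, and it is locally second countable, hence second countable, and consequently $E^1_\bO$ is second countable as a subspace of a product of two second countable spaces. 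Continuity of $r_\bO$ is immediate, being the restriction of a coordinate projection.

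Next I would treat the source map. Writing $s_\bO = \psi \circ \pi$ with $\pi\colon E^1_\bO \to E^1$ the projection $(v,e)\mapsto e$, the space $E^1_\bO$ is precisely the pullback of $\alpha\colon E^0_\bO \to E^0$ along $r\colon E^1 \to E^0$, so $\pi$ is the pullback of the local homeomorphism $\alpha$; since local homeomorphisms are preserved by pullback, $\pi$ is a local homeomorphism, and composing with the local homeomorphism $\psi$ shows $s_\bO$ is a local homeomorphism. One can instead argue directly, as in the in-split lemma: a basic open set $(V\times U)\cap E^1_\bO$ is carried by $s_\bO$ onto $\psi(U\cap r^{-1}(\alpha(V)))$, which is open because $\alpha$ and $\psi$ are open maps, so $s_\bO$ is open; and shrinking $U$ so that $\psi|_U$ is injective and $V$ so that $\alpha|_V$ is injective makes $s_\bO$ injective on $(V\times U)\cap E^1_\bO$, so $s_\bO$ is locally injective; a continuous, open, locally injective map is a local homeomorphism. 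Either way, $E_\bO$ is a topological graph.

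It remains to prove regularity, i.e. that $E_\bO$ has no $r_\bO$-singular vertices. For $r_\bO$-sources, note $r_\bO(E^1_\bO) = \alpha^{-1}(r(E^1))$, and regularity of $E$ gives that $r(E^1)$ is dense in $E^0$; since $\alpha$ is an open map, the preimage of a dense set is dense, so $\overline{r_\bO(E^1_\bO)} = E^0_\bO$ and there are no $r_\bO$-sources. For $r_\bO$-finiteness, fix $v\in E^0_\bO$, set $w = \alpha(v)$, use regularity of $E$ to pick a precompact open $O\ni w$ in $E^0$ with $r^{-1}(\overline O)$ compact, and use local compactness of $E^0_\bO$ to pick a precompact open $V\ni v$ with $\overline V\subseteq\alpha^{-1}(O)$. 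Then $r_\bO^{-1}(\overline V) = (\overline V\times E^1)\cap E^1_\bO$ is closed in $E^0_\bO\times E^1$ and is contained in the compact set $\overline V\times r^{-1}(\overline O)$ (since $(v',e)\in E^1_\bO$ with $v'\in\overline V$ forces $r(e) = \alpha(v')\in O$), hence compact; so $v$ is an $r_\bO$-finite receiver. Combining the two facts, $E_\bO$ is regular.

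I expect the main obstacle to be not any single computation but the bookkeeping of which hypothesis does what: properness of $\alpha$ is precisely what rescues second countability of $E^0_\bO$ and $E^1_\bO$, which the out-split axioms do not demand outright; openness of $\alpha$ and $\psi$ powers both the local-homeomorphism argument for $s_\bO$ and the absence of $r_\bO$-sources; and it is regularity of $E$, not any property of $\alpha$, that produces $r_\bO$-finiteness, once the witnessing neighbourhoods are transported back along $\alpha$.
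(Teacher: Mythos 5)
Your proof is correct and follows essentially the same route as the paper's (deliberately brief) argument: $E^1_\bO$ is closed in the product and hence locally compact Hausdorff, $s_\bO$ is a local homeomorphism because $\alpha$ and $\psi$ are, and regularity of $E_\bO$ comes from compactness of $r$-preimages in the regular graph $E$. The differences are only in packaging: the paper verifies regularity globally by showing $r_\bO$ is proper (via $r_\bO^{-1}(K)=K\times_{\alpha,r}r^{-1}(\alpha(K))$) and onto, whereas you argue pointwise with precompact neighbourhoods and density of the range, and your checks of second countability of $Y$ and of the no-sources condition fill in details the paper leaves implicit.
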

\begin{proof}
We will be brief as the proof is similar to the in-split case.
The edge space $E^1_\bO$ is a closed subspace of a locally compact Hausdorff space, and so is locally compact and Hausdorff. Also $s_\bO$ is a local homeomorphism since $\psi$ and $\alpha$ are.

The map $r_\bO$ is clearly continuous and is surjective since $r$ is surjective. The range $r_\bO$ is proper, and to see this we let $K\subset E^0_\bO$ be compact. Then 
\[
r_\bO^{-1}(K)=K\times_{\alpha,r}r^{-1}(\alpha(K))
\]
is compact. So $E_\bO$ is a regular topological graph.
\end{proof}

\begin{defn}
We call $E_\bO = (E_\bO^0,E_\bO^1, r_\bO, s_\bO)$ the \emph{out-split graph of $E$ via $\bO$}.
\end{defn}

\subsection{Noncommutative out-splits}

In-splits for topological graphs correspond to factorisations of the range map. In the noncommutative setting this translates to a factorisation of the left action on the associated graph correspondence. On the other hand, out-splits for topological graphs correspond to a factorisation of the source map, which defines the right-module structure of the graph correspondence. This makes the noncommutative analogy for out-splits more difficult to pin down than in the case of in-splits.

\begin{defn}\label{def:nc-outsplit}
An \emph{out-split} of a regular $C^*$-correspondence $(\phi_X, {}_A X_A)$ consists of:
\begin{enumerate}
\item an inclusion $\alpha\colon A \to B$ with corresponding conditional expectation  $\Lambda \colon B \to A$;
\item a right $B$-module structure on $X$ which is compatible with $\alpha$ and $\Lambda$ 
in the sense that $x \cdot \alpha(a) = x \cdot a$ for all $x\in X$ and $a\in A$
and $\Lambda((x_1 \mid x_2)_B) = (x_1 \mid x_2)_A$ for all $x_1,x_2\in X$;
\item a left action of $A$ on $X_B$ by adjointable operators that agrees with the left action of $A$ on $X_A$. In either case, we denote the left action by $\phi_X$.
\end{enumerate}
Let $B^\Lambda_A$ be the completion of $B$ with respect to the inner product $\pairing{b_1}{b_2}_A=\Lambda(b^*_1b_2)$ for all $b_1,b_2\in B$,
and let $({\rm Id}_B,{}_B B^\Lambda_A)$ be the associated $B$--$A$-correspondence with left action of $B$ given by multiplication.
We then define the \emph{out-split correspondence} $(\phi_\Lambda, B^{\Lambda} \ox_A X_B)$ over $B$ where the left action is just left multiplication.
\end{defn}

The idea behind \cref{def:nc-outsplit} is that by using the expectation $\Lambda$ we are able to factor the structure of $X_A$ as a right module through the algebra $B$. The following lemma makes this more precise. We write $[b]$ for the class of of $b \in B$ in $B^{\Lambda}$. 

\begin{lemma} \label{lem:outsplit_factorisation}
The correspondence $(\phi_X, {}_A X_A)$ is isomorphic to $(\phi_X\ox \Id_{B^{\Lambda}}, {}_A X_B \ox_B B^\Lambda_A)$.
\end{lemma}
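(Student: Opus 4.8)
The plan is to write down the obvious candidate map and verify it is a correspondence isomorphism. Define $U\colon X_B \ox_B B^{\Lambda}_A \to X_A$ on elementary tensors by $U(x \ox [b]) = x \cdot b$, where $[b]$ denotes the image of $b \in B$ in the completion $B^{\Lambda}_A$ and $x \cdot b$ refers to the right $B$-module structure on $X$ supplied by part (ii) of \cref{def:nc-outsplit}. First I would check that $U$ is well defined on the algebraic balanced tensor product over $B$: since the left $B$-action on $B^{\Lambda}_A$ is left multiplication, the balancing relation identifies $x \cdot b' \ox [b]$ with $x \ox [b'b]$, and both are sent by $U$ to $x \cdot (b'b)$.

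The key step is the inner-product identity. Using the formula for the internal-tensor-product inner product, the fact that $(x_1 \mid x_2)_B \in B$ acts on $B^{\Lambda}_A$ by left multiplication, and the compatibility $\Lambda((x_1 \mid x_2)_B) = (x_1 \mid x_2)_A$ from part (ii), for $x_1,x_2 \in X$ and $b_1,b_2 \in B$ one computes
\[
(x_1 \ox [b_1] \mid x_2 \ox [b_2])_A = \big([b_1] \bigm\vert [\,(x_1 \mid x_2)_B\, b_2\,]\big)_A = \Lambda\big(b_1^*\,(x_1 \mid x_2)_B\, b_2\big) = (x_1 \cdot b_1 \mid x_2 \cdot b_2)_A .
\]
Thus $U$ preserves the $A$-valued inner product on the dense subspace spanned by elementary tensors, so it is isometric; in particular it descends to and extends uniquely to an isometry on the Hausdorff completion $X_B \ox_B B^{\Lambda}_A$. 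This is really the only point that needs care, and it is exactly what the displayed identity delivers, so I do not expect a genuine obstacle here.

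It then remains to see $U$ is a surjective correspondence morphism. Right $A$-linearity is immediate from $U((x \ox [b]) \cdot a) = x \cdot (ba) = (x \cdot b) \cdot a$. For the left actions, since $\phi_X(a)$ acts on $X_B$ by adjointable operators it commutes with the right $B$-action, and by part (iii) it is the same operator as the left action on $X_A$; hence $U\big((\phi_X(a) \ox \Id)(x \ox [b])\big) = (\phi_X(a)x)\cdot b = \phi_X(a)(x \cdot b)$, so $U$ intertwines the left actions. Finally, surjectivity follows because $A \subseteq B$ and every Hilbert module satisfies $X = X \cdot A$ (via the factorisation $x = x' \cdot (x' \mid x')_A$ used in the proof of \cref{lem:beta_is_well_defined}), so the image of $U$ contains $X \cdot A = X$. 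Therefore $U$ is an isometric surjection intertwining all of the correspondence structure, i.e.\ a correspondence isomorphism, as claimed.
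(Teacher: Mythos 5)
Your proof is correct and follows essentially the same route as the paper: the same map $x \ox [b] \mapsto x\cdot b$, the same inner-product computation using $\Lambda((x_1\mid x_2)_B) = (x_1\mid x_2)_A$ to get isometry (hence well-definedness), and the same surjectivity argument via the factorisation $x = y\cdot(y\mid y)_A$ together with $x\cdot a = x\cdot\alpha(a)$. No gaps to report.
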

\begin{proof}
Let $x,x' \in X_B$ and $b,b' \in B$. Observe that
\[
(x \cdot b \mid x ' \cdot b')_A = \Lambda(	(x \cdot b \mid x ' \cdot b')_B) = \Lambda(b^* (x \mid x')_B b') = ([b] \mid [(x \mid x')_B b'])_A = (x \ox [b] \mid x' \ox [b'])_A.
\]
In particular $\|x \cdot b\| = 0$ if and only if $\|x \ox [b]\| = 0$. 
Consequently, the map $\beta \colon X_B \ox_B B^{\Lambda} \to X_A$ given by $\beta(x \ox [b]) = x \cdot b$ for $x \in X_A$ and $b \in [b]$ is well-defined. 
The map $\beta$ is clearly an $A$--$A$-bimodule map, and so $(\Id_A, \beta)$ defines an injective correspondence morphism from $(\phi_X\ox \Id_{B^{\Lambda}}, {}_A X_B \ox_B B^\Lambda_A)$ to $(\phi_X, {}_A X_A)$.
For surjectivity fix $x \in X_A$. Then there exists $y \in X_A$ such that $x = y \cdot (y \mid y)_A = \beta(y \ox [\alpha((y \mid y)_A)])$.
\end{proof}

\begin{thm}
\label{thm:cunt}
The correspondence $(\phi_X, {}_A X_A)$ is elementary strong shift equivalent to the out-split $(\phi_\Lambda, B^\Lambda \ox_A X_B)$.
When $(\phi_X, {}_A X_A)$ is regular and nondegenerate, then the Cuntz--Pimsner algebras
$\cO_{X\ox B^\Lambda}$ and $\cO_{B^\Lambda\ox X}$ are gauge equivariantly Morita equivalent.
\end{thm}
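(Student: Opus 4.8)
The plan is to produce the elementary strong shift equivalence by hand, using the two correspondences that are already implicit in \cref{def:nc-outsplit}, and then to quote \cref{thm:sse-equivariant-morita-equivalence}. Concretely, let $R \coloneqq ({}_A X_B)$ be the module $X$ with the right $B$-action and $B$-valued inner product supplied by the out-split, together with the left $A$-action $\phi_X$, which is adjointable for the $B$-valued inner product by condition (iii) of \cref{def:nc-outsplit}; this is an $A$--$B$-correspondence. Let $S \coloneqq (\Id_B, {}_B B^\Lambda_A)$ be the $B$--$A$-correspondence of \cref{def:nc-outsplit}, with $B$ acting by left multiplication. I will check that $R \ox_B S \cong X_A$ and $S \ox_A R \cong (\phi_\Lambda, B^\Lambda \ox_A X_B)$, which is exactly an elementary strong shift equivalence between $(\phi_X, {}_A X_A)$ and the out-split correspondence.

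The first isomorphism is nothing but \cref{lem:outsplit_factorisation}: $R \ox_B S$ is $X_B \ox_B B^\Lambda_A$ carrying the left action $\phi_X \ox \Id_{B^\Lambda}$, and $x \ox [b] \mapsto x \cdot b$ is the required isomorphism onto $(\phi_X, {}_A X_A)$. The second isomorphism is a matter of unwinding definitions: $S \ox_A R$ is by construction $B^\Lambda_A \ox_A X_B$ with left $B$-action $b \cdot ([b'] \ox x) = [bb'] \ox x$, that is, left multiplication on the $B^\Lambda$-factor, which is precisely $\phi_\Lambda$ in \cref{def:nc-outsplit}. Hence $(\phi_X, {}_A X_A)$ and $(\phi_\Lambda, B^\Lambda \ox_A X_B)$ are elementary strong shift equivalent, and since $X \ox B^\Lambda \cong X$ as $A$-correspondences we also obtain $\cO_{X \ox B^\Lambda} \cong \cO_X$.

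For the Morita equivalence one feeds this strong shift equivalence into \cref{thm:sse-equivariant-morita-equivalence}, so it remains to verify that its hypotheses hold: both correspondences must be nondegenerate and regular. For $(\phi_X, {}_A X_A)$ this is assumed. For the out-split correspondence, nondegeneracy is immediate because $B$ acts by left multiplication on the $B^\Lambda$-factor and $B$ has an approximate unit, whence $\overline{\phi_\Lambda(B)(B^\Lambda \ox_A X)} = B^\Lambda \ox_A X$; injectivity of $\phi_\Lambda$ follows from injectivity of $\phi_X$ together with faithfulness of $\Lambda$, since $[b] \ox x = 0$ for all $x$ forces $(x \mid \phi_X(\Lambda(b^*b))x)_A = 0$ for all $x$, hence $\phi_X(\Lambda(b^*b)) = 0$, hence $\Lambda(b^*b) = 0$, hence $b = 0$.

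The step I expect to be the main obstacle is the ``by compacts'' half of regularity for the out-split correspondence. Writing $\phi_\Lambda(b)$ as left multiplication by $b$ tensored with $\Id_X$ and choosing a frame for $B^\Lambda_A$ (available since $A$ is $\sigma$-unital), one can use \cref{prop:tensor_product_of_frames} to reduce the claim to showing that left multiplication by $b$ is a generalised compact operator on the Hilbert $A$-module $B^\Lambda_A$; this is exactly the point where the restrictive nature of the out-split data enters, being a condition on the conditional expectation $\Lambda$ (for instance the existence of a finite quasi-basis). Granting that the out-split correspondence is nondegenerate and regular, \cref{thm:sse-equivariant-morita-equivalence} then yields the gauge-equivariant Morita equivalence of $\cO_{X \ox B^\Lambda} \cong \cO_X$ and $\cO_{B^\Lambda \ox X}$.
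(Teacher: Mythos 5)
Your proof is essentially the paper's: the authors also take $R = (\phi_X, {}_A X_B)$ and $S = (\Id_B, {}_B B^\Lambda_A)$, obtain $R \ox_B S \cong {}_A X_A$ from \cref{lem:outsplit_factorisation} and $S \ox_A R \cong (\phi_\Lambda, B^\Lambda \ox_A X_B)$ by definition, and then invoke the Muhly--Pask--Tomforde result in the form of \cref{thm:sse-equivariant-morita-equivalence} for the gauge-equivariant Morita equivalence. The one point you flag but do not settle---that the out-split correspondence must be nondegenerate and regular (injectivity of $\phi_\Lambda$ and, above all, the left action of $B$ on $B^\Lambda \ox_A X_B$ being by compacts) in order to apply \cref{thm:sse-equivariant-morita-equivalence}---is not verified in the paper's proof either, which simply applies the cited theorem; so your caution exposes an implicit assumption on the out-split data (faithfulness of $\Lambda$ and a finiteness condition such as a quasi-basis) rather than a divergence from the paper's argument.
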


\begin{proof}
Appealing to \cref{lem:outsplit_factorisation}, it follows by definition that $(\phi_X, {}_A X_A)$ is elementary strong shift equivalent to  $(\phi_\Lambda, B^\Lambda \ox_A X_B)$. The Morita equivalence is the main result of \cite{MPT} applied to the correspondences $ R = (\phi_X,{}_AX_B)$ and $ S= ({\rm Id}_B,{}_BB^\Lambda_A)$, and the gauge equivariance follows from \cref{thm:sse-equivariant-morita-equivalence}.
\end{proof}

\begin{rmk}
With apologies for the terminology, un-out-splitting seems more natural. That is starting with a correspondence $(A,X_B)$ and an expectation $\Lambda \colon B \to A$, one can naturally construct $(A, X \ox_B B^\Lambda_A)$. 
In our previous language we would have $X_A \cong X_B \ox_B B_A^{\Lambda}$.	
The downside is that $(A,X_B)$ is not a self-correspondence. 
\end{rmk}

In the case where $X_A = X(E)$ is the correspondence of a directed graph $E$ with out-split $\bO$, \cref{def:nc-outsplit} recovers the correspondence of the associated out-split graph $X(E_\bO)$. 

\begin{prop}
Let $\bO = (\alpha, E_{\bO}^0, \psi)$ be an out-split of a regular topological graph $E$. Let $A = C_0(E^0)$ and $B=C_0(E^0_{\bO})$. Then:
\begin{enumerate}
\item $\alpha^* \colon A \to B$ given by $\alpha^*(a)(v) = a(\alpha(v))$ is an injective $*$-homomorphism;
\item the conditional expectation $\Lambda \colon B \to A$ given by  
\[
\Lambda(b)(v) = \sum_{u \in \alpha^{-1}(v)} b(u)
\]
for $b \in C_c(E^0_{\bO})$ is compatible with $\alpha^*$; and
\item\label{3} $X(E)$ can be equipped with the structure of a right $B$-module via the formulae 
\[
(x \cdot b)(e) = x(e) b(\psi(e)) \quad \text{ and } \quad (x \mid y)_B (u) = \sum_{e \in \psi^{-1}(u)} \ol{x(e)} y(e)
\]
for all $x,y \in C_c(E^1)$ and $b \in C_0(E^0_{\bO})$, and the left action of $A$ on $X(E)$ also defines a left action by adjointable operators with respect to the new right $B$-module structure.
\end{enumerate}
Moreover, the correspondences $(\phi,X(E_{\bO}))$ and $(\phi^{\Lambda},  B^{\Lambda} \ox_A X(E) )$ are isomorphic.
\end{prop}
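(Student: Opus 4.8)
Parts (i)--(iii) of the proposition are straightforward verifications, all resting on one structural fact: since $\alpha$ and $\psi$ are proper local homeomorphisms between second-countable locally compact Hausdorff spaces, every fibre $\alpha^{-1}(v)$ and $\psi^{-1}(u)$ is finite, and each point has a neighbourhood over which $\alpha$ (resp. $\psi$) is evenly covered by finitely many sheets. This makes the sums defining $\Lambda(b)$ and $(x\mid y)_B$ finite and shows, together with properness, that $\alpha^*(a)$, $\Lambda(b)$, and $(x\mid y)_B$ all lie in $C_0$ (in $C_c$ when $x,y\in C_c(E^1)$). Granting this: (i) $\alpha^*$ is a $*$-homomorphism since $\alpha$ is continuous, and injective since $\alpha$ is surjective; (ii) $\Lambda$ is positive, satisfies $\Lambda(\alpha^*(a)b\alpha^*(a'))=a\Lambda(b)a'$ by pulling the scalars $a(\alpha(u))$ through the finite fibre sum, and satisfies $\Lambda((x\mid y)_B)=(x\mid y)_A$ because $s=\alpha\circ\psi$ gives $s^{-1}(v)=\bigsqcup_{u\in\alpha^{-1}(v)}\psi^{-1}(u)$ --- these are exactly the compatibility conditions required of the pair $(\alpha^*,\Lambda)$ in \cref{def:nc-outsplit}; (iii) the Hilbert-module axioms for $(\cdot\mid\cdot)_B$ are routine, $\|(x\mid x)_B\|_\infty\le\|(x\mid x)_A\|_\infty$ gives boundedness of $x\mapsto x\cdot b$ and of the left action for the new norm, and $\phi_X(a)$ is adjointable for $(\cdot\mid\cdot)_B$ with $\phi_X(a)^*=\phi_X(\bar a)$ by the same computation as for $(\cdot\mid\cdot)_A$. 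I will write $X(E)_B$ for the completion of $C_c(E^1)$ in this $B$-valued inner product; it need not coincide with $X(E)$ as a set when $\alpha$ has unboundedly many sheets over distinct components of $E^0$.

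For the ``moreover'' statement I would construct the isomorphism directly. Recall that $E^1_\bO=\{(v,e)\in E^0_\bO\times E^1:\alpha(v)=r(e)\}$ with $r_\bO(v,e)=v$ and $s_\bO(v,e)=\psi(e)$, so $X(E_\bO)$ is the completion of $C_c(E^1_\bO)$. On the algebraic tensor product $C_c(E^0_\bO)\odot C_c(E^1)$ define
\[
\Theta([b]\ox x)(v,e)=b(v)\,x(e),\qquad (v,e)\in E^1_\bO .
\]
Since $\alpha(v)=r(e)$ on $E^1_\bO$, we have $\Theta([b\alpha^*(a)]\ox x)(v,e)=b(v)a(\alpha(v))x(e)=b(v)a(r(e))x(e)=\Theta([b]\ox\phi_X(a)x)(v,e)$, so $\Theta$ descends to the $A$-balanced tensor product $B^\Lambda\ox_A X(E)_B$, with image in $C_c(E^1_\bO)$. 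It clearly intertwines the bimodule structures: left multiplication by $b'$ becomes multiplication by $b'\circ r_\bO$ (as $r_\bO(v,e)=v$) and the right action of $b'$ becomes multiplication by $b'\circ\psi=b'\circ s_\bO$, which is exactly the right $B$-action of (iii) applied to the $x$-factor.

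The crux is that $\Theta$ preserves inner products. On the tensor-product side, $([b_1]\ox x_1\mid[b_2]\ox x_2)_B=(x_1\mid\phi_X(\Lambda(b_1^*b_2))x_2)_B$, which at $u\in E^0_\bO$ equals $\sum_{e\in\psi^{-1}(u)}\overline{x_1(e)}\,x_2(e)\sum_{v\in\alpha^{-1}(r(e))}\overline{b_1(v)}\,b_2(v)$; on the $X(E_\bO)$ side, $(\Theta([b_1]\ox x_1)\mid\Theta([b_2]\ox x_2))_B(u)=\sum_{(v,e):\,\psi(e)=u}\overline{b_1(v)x_1(e)}\,b_2(v)x_2(e)$, and the two agree because $\{(v,e):\psi(e)=u\}=\{(v,e):e\in\psi^{-1}(u),\ v\in\alpha^{-1}(r(e))\}$. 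Hence $\Theta$ is isometric and extends to an isometric correspondence morphism $B^\Lambda\ox_A X(E)_B\to X(E_\bO)$.

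It remains to show $\Theta$ is surjective. Its image is the linear span of the functions $(v,e)\mapsto b(v)x(e)$; a partition-of-unity argument on $E^1_\bO$ combined with a Stone--Weierstrass argument on product neighbourhoods (exactly as in the surjectivity step of the proof of \cref{prop:graph_insplit_gives_correspondence_insplit}) shows this span is dense in $C_c(E^1_\bO)$ for the inductive-limit topology; since $s_\bO$ has finite, locally bounded fibres, inductive-limit density upgrades to density in the $X(E_\bO)$-norm. This gives the desired correspondence isomorphism $(\phi^\Lambda,B^\Lambda\ox_A X(E)_B)\cong(\phi,X(E_\bO))$. The only parts that need genuine care are the evenly-covered/finite-fibre behaviour of proper local homeomorphisms (which underlies continuity of $\Lambda(b)$ and $(x\mid y)_B$ and the comparison of the module norm with the sup norm on compactly supported functions) and the surjectivity of $\Theta$; I expect the latter to be the main obstacle, as it is the only step not reducible to a direct computation.
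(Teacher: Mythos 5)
Your proposal is correct and follows essentially the same route as the paper: the same map $[b]\ox x \mapsto \big((v,e)\mapsto b(v)x(e)\big)$, the same inner-product computation via the fibre decomposition $s_{\bO}^{-1}(u)=\{(v,e): e\in\psi^{-1}(u),\, v\in\alpha^{-1}(r(e))\}$, and the same Stone--Weierstrass argument for surjectivity. The extra care you take with the finite-fibre/properness issues and the distinction between the $A$- and $B$-norm completions of $C_c(E^1)$ is a refinement of details the paper treats as routine, not a different method.
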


\begin{proof}
Since $\alpha \colon E^0_{\bO} \to E^0$ is  proper and surjective, $\alpha^*$ defines an injective $*$-homomorphism. The expectation $\Lambda$ is clearly compatible with $\alpha^*$ in the sense that $\Lambda(\alpha^*(a_1)b \alpha^*(a_2)) = a_1 \Lambda(b) a_2$ for all $a_1,a_2 \in A$ and $b \in B$. It is also straightforward to verify that the formulae in \ref{3} define a right $B$-module structure on $X(E)$. 

Since $s =  \alpha \circ \psi$, it follows that $x \cdot \alpha^*(a) = x \cdot a$.
Moreover,
\begin{align*}
\Lambda((x_1 \mid x_2)_B)(v) 
&= \sum_{u \in \alpha^{-1}(v)} (x_1 \mid x_2)_B(u) 
= \sum_{u \in \alpha^{-1}(v)}  \sum_{e \in \psi^{-1}(u)} \ol{x_1(e)} x_2(e)\\
&= \sum_{s(e) = v}  \ol{x_1(e)} x_2(e)
= (x_1 \mid x_2)_A(v),
\end{align*}
for all $x_1,x_2\in X$ and $v\in E^0$. It follows that we have an out-split (cf.~\cref{def:nc-outsplit}) on the graph module $X(E)$ so we may form the out-split correspondence $(\phi^{\Lambda}, B^{\Lambda} \ox_A X(E) )$. 

We would like to define a map $\Psi\colon (\phi_\Lambda, B^\Lambda\ox_A X(E)_B) \to (\phi, X(E_{\bO}))$ by 
\[
\Psi([b]\ox x)(u,e) = b(u) x(e),
\]
for all $[b]\ox x\in B^\Lambda\ox_A X_B$ and $(u,e)\in E_{\bO}^1$.
For $u\in E_{\bO}^0$, recall that  
\[
s_{\bO}^{-1}(u) = \{ (w,e)\in E_{\bO}^0\times E^1 : \psi(e) = u, \alpha(w) = r(e) \}.
\]
With this observation we can compute
\begin{align*}
(\Psi ([b_1] \ox x_1) \mid \Psi ([b_2] \ox x_2))_B(u) 
&= \sum_{(w,e)\in s_{\bO}^{-1}(u)} \ol{ b_1(w)x_1(e)} b_2(w) x_2(e)\\
&= \sum_{e\in \psi^{-1}(u)} \sum_{w\in \alpha^{-1}(r(e))} \ol{ b_1(w)x_1(e)} b_2(w) x_2(e) \\
&= \sum_{e \in \psi^{-1}(u)} \ol{x_1(e)} \Lambda(b_1^*b_2) (r(e))x_2(e)\\
&= (x_1 \mid \Lambda(b_1^*b_2)x_2)_B(u)\\
&= ([b_1] \ox x_1 \mid [b_2] \ox x_2)_B (u).
\end{align*}
Consequently, $\Psi$ is well-defined and extends to an isometric linear map  $\Psi \colon B^\Lambda\ox_A X_B \to X(E_{\bO})$.
The map $\Psi$ preserves the left action since
\[
\Psi(\phi_{\Lambda}(b_1) ([b_2] \ox x))(v,e) = \Psi ([b_1b_2] \ox x)(v,e) = b_1(v)b_2(v)x(e) = \phi (b_1) \Psi([b_2] \cdot x) (v,e),
\]
for all $b_1,b_2 \in B$, $x \in X$, and $(v,e)\in E_{\bO}^1$; 
similarly, $\Psi$ preserves the right action as
\[
\Psi([b_1] \ox x \cdot b_2) (v,e) = b_1(v)x(e)b_2(\psi(e)) = (\Psi([b_1] \ox x) \cdot b_2)(v,e),
\]
for all $b_1,b_2\in B$, $x\in X$, and $(v,e)\in E_{\bO}^1$.

Since functions of the form $(v,e) \mapsto b(v) x(e)$ densely span $C_c(E^0_{\bO} \times_{\alpha,r} E^1)$, it follows from the Stone-Weierstrass theorem that $\Psi$ is surjective.
\end{proof}

\begin{example}
\label{eg:n-m-circles-again}
We give the out-split version of \cref{eg:n-m-circles}.

Fix $m,n \in \Z \setminus \{0\}$ and let $E^0 \coloneqq \bT$ and $E^1 \coloneqq \bT$. 
Define $r,s \colon E^1 \to E^0$ by $r(z) = z^m$ and $s(z) = z^n$. Then $E = (E^0,E^1,r,s)$ is a topological graph.
Suppose $a,b \in \Z$ satisfy $n = ab$. Define $\psi \colon E^1 \to \bT$ by $\psi(z) = z^a$ and $\alpha \colon \bT \to E^0$ by $\alpha(z) = z^b$. 
Since $s(z) = z^n = (z^{a})^b = \alpha \circ \psi(z)$, it follows that $\bO = (\alpha, \bT, \psi)$ is an out-split of $E$. 
Exactly as in \cref{eg:n-m-circles}, the new edge space
\[
E^1_\bO = \{ (z_1,z_2) \in \bT^2 \mid  z_1^b=z_2^m \}.
\] 
is homeomorphic to a disjoint union of $\gcd(m,b)$ copies of $\bT$. 

An explicit identification of $E^1_\bO$ with the disjoint union of circles is given by fixing a primitive $|b|$-th root of unity $\lambda$. Let $\pi \colon \{1 ,\ldots, \gcd(m,b)\} \times \bT \to E_\bO^1$ be the homeomorphism defined by $\pi(k,z)  = (\lambda^kz^{m/\gcd(m,b)}, z^{b/\gcd(m,b)})$. Under this identification, 
\[
r_\bO(k,z) = \lambda^kz^{m/\gcd(m,b)}  \quad \text{and} \quad s_\bO(k,z) = \psi(z^{b/\gcd(m,b)})=z^{ab/\gcd(m,b)}=z^{n/\gcd(m,b)}.
\]
By \cref{thm:cunt}, the topological graphs $E$ and $E_\bO$ have gauge equivariantly Morita equivalent $C^*$-algebras. This is very different from the $*$-isomorphism arising from the analogous in-split of the range map.
\end{example}

\end{document}